\newcommand{\p}{\partial}
\newcommand{\sgn}{\mathop{\rm sgn}\nolimits}
\newcommand{\ord}{\mathop{\rm ord}\nolimits}
\newcommand{\const}{{\rm const}}
\newtheorem{theorem}{Theorem}
\newtheorem{lemma}[theorem]{Lemma}
\newtheorem{corollary}[theorem]{Corollary}
\newtheorem{proposition}[theorem]{Proposition}
\newtheorem{conjecture}[theorem]{Conjecture}
{\theoremstyle{definition}
	
	\newtheorem{remark}[theorem]{Remark}
}
\newcommand{\todo}[1][\null]{\ensuremath{\clubsuit}}
\newcommand{\noprint}[1]{}
\newcommand{\lsemioplus}{\mathbin{\mbox{$\lefteqn{\hspace{.77ex}\rule{.4pt}{1.2ex}}{\in}$}}}
\begin{document}

\par\noindent {\LARGE\bf
Point and generalized symmetries of the heat equation revisited
\par}

\vspace{4mm}\par\noindent{\large
\large Serhii D. Koval$^{\dag}$ and Roman O. Popovych$^{\ddag}$
}
	
\vspace{4mm}\par\noindent{\it\small
$^\dag$Department of Mathematics and Statistics, Memorial University of Newfoundland,\\
$\phantom{^\ddag}$\,St.\ John's (NL) A1C 5S7, Canada\\
$\phantom{^\ddag}$\,Department of Mathematics, Kyiv Academic University, 36 Vernads'koho Blvd, 03142 Kyiv, Ukraine
\par}

\vspace{2mm}\par\noindent{\it\small
$^\ddag$\,Mathematical Institute, Silesian University in Opava, Na Rybn\'\i{}\v{c}ku 1, 746 01 Opava, Czech Republic\\
$\phantom{^\S}$Institute of Mathematics of NAS of Ukraine, 3 Tereshchenkivska Str., 01024 Kyiv, Ukraine
\par}

\vspace{4mm}\par\noindent
E-mails:
skoval@mun.ca, rop@imath.kiev.ua
	
\vspace{5mm}\par\noindent\hspace*{10mm}\parbox{140mm}{\small
We derive a nice representation for point symmetry transformations
of the (1+1)-dimensional linear heat equation and properly interpret them.
This allows us to prove that the pseudogroup of these transformations has exactly two connected components.
That is, the heat equation admits a single independent discrete symmetry,
which can be chosen to be alternating the sign of the dependent variable.
We introduce the notion of pseudo-discrete elements of a Lie group
and show that alternating the sign of the space variable,
which was for a long time misinterpreted as a discrete symmetry of the heat equation,
is in fact a pseudo-discrete element of its essential point symmetry group.
The classification of subalgebras of the essential Lie invariance algebra of the heat equation is enhanced
and the description of generalized symmetries of this equation is refined as well.
We also consider the Burgers equation because of its relation to the heat equation
and prove that it admits no discrete point symmetries.
The developed approach to point-symmetry groups
whose elements have components that are linear fractional in some variables
can directly be extended to many other linear and nonlinear differential equations.
}\par\vspace{4mm}
	
\noprint{
Keywords:
heat equation;
point-symmetry pseudogroup;
Lie symmetry;
discrete symmetry;
subalgebra classification;
generalized symmetry

MSC: 35K05, 35B06, 35A30

35-XX Partial differential equations
  35Kxx Parabolic equations and parabolic systems {For global analysis, analysis on manifolds, see 58J35}
    35K05 Heat equation
    35K10 Second-order parabolic equations
  35Qxx	Partial differential equations of mathematical physics and other areas of application [See also 35J05, 35J10, 35K05, 35L05]
    35Q79  	PDEs in connection with classical thermodynamics and heat transfer
  35Axx General topics
    35A30 Geometric theory, characteristics, transformations [See also 58J70, 58J72]
  35Bxx Qualitative properties of solutions
    35B06 Symmetries, invariants, etc.
}

\section{Introduction}

The (1+1)-dimensional (linear) heat equation
\begin{gather}\label{eq:LinHeat}
u_t=u_{xx}
\end{gather}
is one of the simplest but most fundamental equations of mathematical physics.
This equation became a test example in a number of branches within the theory of differential equations,
including symmetry analysis of such equations.
Studying symmetries and related objects of the equation~\eqref{eq:LinHeat}
was initiated by Sophus Lie himself~\cite{lie1881a}
in the course of group classification of second-order linear partial differential equations in two independent variables.
In particular, he computed its maximal Lie invariance algebra and
showed that it gives a unique (modulo the point equivalence) maximal Lie-symmetry extension
in the class of linear (1+1)-dimensional second-order evolution equations.
In the present, the heat equation is the first standard equation for testing
packages for symbolic computation of symmetries of various kinds and related objects
for differential equations.
It was the equation~\eqref{eq:LinHeat} that was used as the only example
for introducing the concept of nonclassical reduction in~\cite{blum1969a}.
Such reductions of~\eqref{eq:LinHeat} were first completely described only in~\cite{fush1992e}
(see also~\cite{webb1990a} for a preliminary study),
and this result originated studying singular reduction modules and no-go problems on nonclassical reductions
for general partial differential equations \cite{boyk2016a,kunz2008b,popo1998b,popo2008b,zhda1998a}.
The space of local conservation laws of the equation~\eqref{eq:LinHeat}
is known for a long time~\cite{Dorodnitsyn&Svirshchevskii1983};
more specifically, the space of its reduced conservation-law characteristics
coincides with the solution space of the backward heat equation.
The theorem \cite[Theorem~8]{popo2005b} that
any potential conservation law of the equation~\eqref{eq:LinHeat} is equivalent,
on the solution set of the corresponding potential system, to a local conservation law of this equation
was generalized in~\cite{popo2008a} to an arbitrary linear (1+1)-dimensional second-order evolution equation.
Therein, potential symmetries of such equations, including equations~\eqref{eq:LinHeat},
and Darboux transformations between them were comprehensively studied following~\cite{matv1991A}.
To gain an impression of the state of the art in symmetry analysis of the equation~\eqref{eq:LinHeat},
see, e.g., Examples~2.41, 3.3, 3.13, 3.17 and~5.21 in \cite{olve1993A}, \cite[Section~10.1]{CRChandbook1994V1},
\cite{fush1992e,opan2022a,popo2008a}, \cite[Section~A]{vane2021} and \cite[p.~531--535]{wint1990a}.

In spite of the rich and diverse history of studying the equation~\eqref{eq:LinHeat},
a number of basic problems related to it even within the framework of classical group analysis
still require refinement.
Thus, a neat description of the point symmetry pseudogroup~$G$ of this equation
and an accurate classification of subalgebras of its essential Lie invariance algebra~$\mathfrak g^{\rm ess}$
have not yet been presented in the literature.
Improper interpretations of continuous and discrete symmetries of the equation~\eqref{eq:LinHeat}
led to the inconsistency between the action of the essential point symmetry group~$G^{\rm ess}$ on~$\mathfrak g^{\rm ess}$
and the inner automorphism group of~$\mathfrak g^{\rm ess}$.
In the present work, we successfully solve the above problems using an approach from~\cite{kova2022a}
for enhancing the representation of point symmetries of the equation~\eqref{eq:LinHeat}
given in the proof of Theorem~8 in~\cite{opan2022a}.
After achieving the consistency between the action of~$G^{\rm ess}$ on~$\mathfrak g^{\rm ess}$
and the inner automorphism group of~$\mathfrak g^{\rm ess}$,
we first construct an accurate optimal list of subalgebras of~$\mathfrak g^{\rm ess}$ and
an optimal list of one-dimensional subalgebras of~$\mathfrak g$.
We also introduce the notion of pseudo-discrete elements of a Lie group.
It turns out that the point transformation only alternating the sign of the space variable~$x$,
which was incorrectly assumed to be a discrete point symmetry transformation of the equation~\eqref{eq:LinHeat},
is in fact a pseudo-discrete element of~$G^{\rm ess}$.

It is surprising that the explicit description of generalized symmetries of the equation~\eqref{eq:LinHeat}
is not well known and was presented only in \cite[Section~10.1]{CRChandbook1994V1} without proof
although it can be straightforwardly derived from well-known particular results on these symmetries
from, e.g., \cite[Example~5.21]{olve1993A} and \cite[Section~4.4.2]{boch1999A}.
At the same time, this description can also be easily obtained from the very beginning
using the Shapovalov--Shirokov theorem~\cite[Theorem~4.1]{shap1992a}.
The algebra~$\Sigma$ of generalized symmetries of the equation~\eqref{eq:LinHeat}
is the semidirect sum of the subalgebra~$\Lambda$ of linear generalized symmetries of~\eqref{eq:LinHeat}
and the ideal~$\Sigma^{-\infty}$ associated with the linear superposition of solutions of~\eqref{eq:LinHeat},
and we prove that the subalgebra~$\Lambda$ is generated from the simplest nonzero linear generalized symmetry $u\p_u$
by two recursion operators, which are associated with Lie symmetries of space translations and Galilean boosts.
Hence this subalgebra is isomorphic to the Lie algebra \smash{${\rm W}(1,\mathbb R)^{(-)}$}
associated with the rank-one Weyl algebra ${\rm W}(1,\mathbb R)$.

Since the Burgers equation is related to the equation~\eqref{eq:LinHeat}
via linearizing by the Hopf--Cole transformation
and the $t$-components of its point symmetry transformations are linear fractional in~$t$,
in Section~\ref{sec:LinHeatBurgers} we extend the suggested approach to the Burgers equation.
We enhance the representation for its point symmetries given in~\cite{poch2013a,poch2017a},
properly interpret them and show that the Burgers equation admits no discrete point symmetries,
whereas alternating the sign of the space variable is its pseudo-discrete point symmetry.

The projections of the point-symmetry (pseudo)groups of
many linear and nonlinear systems of differential equations, including multi-dimensional ones,
to subspaces coordinatized by certain system variables
consist of linear fractional transformations,
see the beginning of Section~\ref{sec:LinHeatBurgers}.
The consideration of the remarkable (1+2)-dimensional Fokker--Planck equation
(also known as the Kolmogorov equation) $u_t+xu_y=u_{xx}$ in~\cite{kova2022a}
and of the heat and Burgers equations in the present paper clearly shows
that the developed approach to interpreting transformations with linear fractional components
can directly be extended to all such systems.

The structure of the paper is as follows.
In Section~\ref{sec:LinHeatMIA}, we present the maximal Lie invariance algebra of the equation~\eqref{eq:LinHeat}
and describe its key properties.
Using the direct method, in Section~\ref{sec:LinHeatPointSymGroup}
we re-compute the point symmetry pseudogroup of this equation
and analyze its structure, including its decomposition and the description of its discrete elements.
Section~\ref{sec:LinHeatClassIneqSubalgebras} is devoted to the classifications
of subalgebras of~$\mathfrak g^{\rm ess}$ of all possible dimensions up to the $G^{\rm ess}$-equivalence and
of one-dimensional subalgebras of~$\mathfrak g$ modulo the $G$-equivalence.
In Section~\ref{sec:LinHeatPseudo-discreteSyms}, we solve
the problem of characterizing elements of~$G^{\rm ess}$ belonging to~$\exp(\mathfrak g^{\rm ess})$.
We also introduce the notion of pseudo-discrete element of a Lie group and show that
the point transformation only alternating the sign of~$x$ is a pseudo-discrete element of~$G^{\rm ess}$.
The description of generalized symmetries of the equation~\eqref{eq:LinHeat}
is essentially refined and completed in Section~\ref{sec:LinHeatGenSyms}.
The developed approach to point symmetry (pseudo)groups with linear fractional transformation components
is applied to the Burgers equation in Section~\ref{sec:LinHeatBurgers}.
The results of the paper are analyzed in Section~\ref{sec:Conclusion}.

\section{Lie invariance algebra}\label{sec:LinHeatMIA}

The maximal Lie invariance algebra~$\mathfrak g$ of the equation~\eqref{eq:LinHeat} is spanned by the vector fields
\begin{gather*}
\mathcal P^t =\p_t, \quad
\mathcal D   =2t\p_t+x\p_x-\tfrac12u\p_u,\quad
\mathcal K   =t^2\p_t+tx\p_x-\tfrac14(x^2+2t)u\p_u,\\
\mathcal G^x =t\p_x-\tfrac12xu\p_u,\quad
\mathcal P^x =\p_x,\quad
\mathcal I   =u\p_u,\quad
\mathcal Z(f)=f(t,x)\p_u,
\end{gather*}
where the parameter function $f$ depends on~$(t,x)$ and runs through the solution set of the equation~\eqref{eq:LinHeat}.
The contact invariance algebra~$\mathfrak g_{\rm c}$ of the equation~\eqref{eq:LinHeat}
is just the first prolongation of the algebra~$\mathfrak g$, $\mathfrak g_{\rm c}=\mathfrak g_{(1)}$.

The vector fields $\mathcal Z(f)$ constitute the infinite-dimensional abelian ideal $\mathfrak g^{\rm lin}$ of~$\mathfrak g$
associated with the linear superposition of solutions of~\eqref{eq:LinHeat}, $\mathfrak g^{\rm lin}:=\{\mathcal Z(f)\}$.
Thus, the algebra $\mathfrak g$ can be represented as a semidirect sum, $\mathfrak g=\mathfrak g^{\rm ess}\lsemioplus\mathfrak g^{\rm lin}$,
where
\begin{gather}\label{eq:LinHeatEssAlg}
\mathfrak g^{\rm ess}=\langle\mathcal P^t,\mathcal D,\mathcal K,\mathcal G^x,\mathcal P^x,\mathcal I\rangle
\end{gather}
is a (six-dimensional) subalgebra of $\mathfrak g$,
called the \emph{essential Lie invariance algebra} of~\eqref{eq:LinHeat}.

Up to the antisymmetry of the Lie bracket of vector fields,
the nonzero commutation relations between the basis elements of~$\mathfrak g^{\rm ess}$ are exhausted by
\begin{gather*}
[\mathcal D,\mathcal P^t]  =-2\mathcal P^t,\quad
[\mathcal D,  \mathcal K]  =2\mathcal K,\quad
[\mathcal P^t,\mathcal K]  =\mathcal D,\\
[\mathcal P^t,\mathcal G^x]=\mathcal P^x,\quad
[\mathcal D,\mathcal G^x]  =\mathcal G^x,\quad
[\mathcal D,\mathcal P^x]  =-\mathcal P^x,\quad
[\mathcal K,\mathcal P^x]  =-\mathcal G^x,\\
[\mathcal G^x,\mathcal P^x]=\tfrac12\mathcal I.
\end{gather*}

The algebra $\mathfrak g^{\rm ess}$ is nonsolvable.
Its Levi decomposition is given by $\mathfrak g^{\rm ess}=\mathfrak f\lsemioplus\mathfrak r$,
where the radical~$\mathfrak r$ of~$\mathfrak g^{\rm ess}$ coincides with the nilradical of~$\mathfrak g^{\rm ess}$ and
is spanned by the vector fields $\mathcal G^x$, $\mathcal P^x$ and~$\mathcal I$.
The Levi factor $\mathfrak f=\langle\mathcal P^t,\mathcal D,\mathcal K\rangle$ of~$\mathfrak g^{\rm ess}$
is isomorphic to ${\rm sl}(2,\mathbb R)$,
the radical~$\mathfrak r$ of~$\mathfrak g^{\rm ess}$ is isomorphic to the (real) rank-one Heisenberg algebra ${\rm h}(1,\mathbb R)$,
and the real representation of the Levi factor~$\mathfrak f$ on the radical~$\mathfrak r$
coincides, in the basis $(\mathcal G^x,\mathcal P^x,\mathcal I)$,
with the representation $\rho_1\oplus \rho_0$ of~${\rm sl}(2,\mathbb R)$.
Here $\rho_n$ is the standard irreducible representation of~${\rm sl}(2,\mathbb R)$ on $\mathbb R^{n+1}$.
More specifically,
$\rho_n( \mathcal P^t)_{ij}=(n-j)\delta_{i,j+1}$,
$\rho_n( \mathcal D)_{ij}  =(n-2j)\delta_{ij}$,
$\rho_n(-\mathcal K)_{ij}  =j\delta_{i+1,j}$,
where $i,j\in\{0,\dots,n\}$, $n\in\mathbb N_0:=\mathbb N\cup\{0\}$,
and $\delta_{kl}$ is the Kronecker delta, i.e., $\delta_{kl}=1$ if $k=l$ and $\delta_{kl}=0$ otherwise, $k,l\in\mathbb N_0$.
Thus, the entire algebra~$\mathfrak g^{\rm ess}$ is isomorphic to
the so-called special Galilei algebra ${\rm sl}(2,\mathbb R)\lsemioplus_{\rho_1\oplus \rho_0}{\rm h}(1,\mathbb R)$,
which is denoted by~$L_{6,2}$
in the classification of indecomposable Lie algebras of dimensions up to eight with nontrivial Levi decompositions
from~\cite{turk1988a}.

The radical~$\mathfrak r$ and its derived algebra~$\mathfrak r':=[\mathfrak r,\mathfrak r]=\langle\mathcal I\rangle$,
which coincides with the center~$\mathfrak z(\mathfrak g^{\rm ess})$ of~$\mathfrak g^{\rm ess}$,
are the only proper megaideals and, moreover, the only proper ideals of~$\mathfrak g^{\rm ess}$.

Another basis of~$\mathfrak g^{\rm ess}$,
which stems from the Iwasawa decomposition of ${\rm SL}(2,\mathbb R)$
and is thus more convenient in many aspects, is
$(\mathcal Q^+,\mathcal D,\mathcal P^t,\mathcal G^x,\mathcal P^x,\mathcal I)$,
where $\mathcal Q^\pm:=\mathcal P^t\pm\mathcal K$.

\section{Complete point symmetry pseudogroup}\label{sec:LinHeatPointSymGroup}

The equation~\eqref{eq:LinHeat} belongs to the class $\mathcal E$
of linear (1+1)-dimensional second-order evolution equations of the general form
\begin{gather}\label{eq:LinHeatClassE}
u_t=A(t,x)u_{xx}+B(t,x)u_x+C(t,x)u+D(t,x)\quad \mbox{with}\quad A\ne0.
\end{gather}
Here the tuple of arbitrary elements of $\mathcal E$ is $\theta:=(A,B,C,D)\in\mathcal S_{\mathcal E}$,
and $S_{\mathcal E}$ is the solution set of the auxiliary system
consisting of the single inequality $A\ne0$
and the equations meaning that the arbitrary elements depend
at most on~$(t,x)$, $A_u=A_{u_x}=A_{u_t}=A_{u_{tt}}=A_{u_{tx}}=A_{u_{xx}}=0$
and similar equations for~$B$, $C$ and~$D$.

To find the point symmetry pseudogroup~$G$ of the equation~\eqref{eq:LinHeat},
we start with considering the equivalence groupoid of the class $\mathcal E$,
which in its turn is a natural choice for a (normalized) superclass for the equation~\eqref{eq:LinHeat}.
We use the papers~\cite{opan2022a,popo2008a} as reference points for known results on admissible transformations of the class $\mathcal E$.

\begin{proposition}[\cite{popo2008a}]\label{prop:EquivalenceGroupE}
The class~$\mathcal E$ is normalized in the usual sense.
Its usual equivalence pseudogroup~$G^\sim_{\mathcal E}$ consists of the transformations of the~form%
\begin{subequations}\label{eq:PointTransInEA}
\begin{gather}\label{eq:GenFormOfPointTransInEA}
\tilde t=T(t),\quad
\tilde x=X(t,x),\quad
\tilde u=U^1(t,x)u+U^0(t,x),\\ \label{eq:GenFormOfPointTransInEB}
\tilde A=\frac{X_x^2}{T_t}A,\quad
\tilde B=\frac{X_x}{T_t}\left(B-2\frac{U^1_x}{U^1}A\right)-\frac{X_t-X_{xx}A}{T_t},\quad
\tilde C=-\frac{U^1}{T_t}\mathrm E\frac1{U^1},\\
\label{eq:GenFormOfPointTransInEC}
\tilde D=\frac{U^1}{T_t}\left(D+\mathrm E\frac{U^0}{U^1}\right),
\end{gather}
\end{subequations}
where~$T$, $X$, $U^0$ and~$U^1$ are arbitrary smooth functions of their arguments with~$T_tX_xU^1\ne0$, and
$\mathrm E:=\p_t-A\p_{xx}-B\p_x-C$.
\end{proposition}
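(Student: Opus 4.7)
\medskip

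\noindent\textbf{Proof proposal.}
The plan is to apply the direct method to the class~$\mathcal E$.
Consider an arbitrary admissible transformation, i.e., a general point transformation
$\tilde t=T(t,x,u)$, $\tilde x=X(t,x,u)$, $\tilde u=U(t,x,u)$
with nonvanishing Jacobian that maps some source equation in~$\mathcal E$ with coefficient tuple $(A,B,C,D)$
to a target equation in~$\mathcal E$ with coefficient tuple $(\tilde A,\tilde B,\tilde C,\tilde D)$.
Using the chain rule, I would express the transformed derivatives
$\tilde u_{\tilde t}$, $\tilde u_{\tilde x}$ and $\tilde u_{\tilde x\tilde x}$
as rational functions in the jet variables $u_t,u_x,u_{xx}$ over the ring of derivatives of $T$, $X$, $U$.
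Substituting these expressions into the target equation and using the source equation to eliminate~$u_t$
yields a single relation that must hold identically on the second-order jet space in the remaining variables $u,u_x,u_{xx}$.

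The decisive step is the splitting of this identity with respect to the jet variables $u_x$ and $u_{xx}$.
Because the target equation is quasilinear of second order while the chain-rule expressions
for $\tilde u_{\tilde x\tilde x}$ and $\tilde u_{\tilde t}$ contain nonlinear monomials in $u_{xx}$ and cross terms
of the form $u_x u_{xx}$ with coefficients that are polynomial in $T_x$, $T_u$ and $X_u$,
the vanishing of these coefficients forces $T_x=T_u=0$ and $X_u=0$.
Hence $T=T(t)$ and $X=X(t,x)$ with $T_tX_x\ne0$ by the Jacobian condition,
which cleans up the remaining identity considerably.
Next, since the source and target coefficients do not depend on~$u$,
collecting the coefficient of $u_{xx}$ (which, after the previous reduction, is a multiple of $U_{uu}$)
and then the coefficient of $u_x^2$ (which produces $U_{uu}$ as well) gives $U_{uu}=0$,
so $U$ is affine in~$u$, $U=U^1(t,x)u+U^0(t,x)$ with $U^1\ne0$.
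This establishes the structural form~\eqref{eq:GenFormOfPointTransInEA}.

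Having reduced the transformation to the stated form, the rest of the proof is routine bookkeeping.
Substituting the affine ansatz for~$U$ back into the identity and collecting the coefficients of
$u_{xx}$, $u_x$, $u$ and~$1$ produces exactly four scalar relations;
solving each of them for the respective target coefficient gives
the formulas~\eqref{eq:GenFormOfPointTransInEB}--\eqref{eq:GenFormOfPointTransInEC},
with the operator $\mathrm E=\p_t-A\p_{xx}-B\p_x-C$ emerging naturally
from the two relations for $\tilde C$ and~$\tilde D$.
Since the derivation used no properties of the source tuple beyond membership in~$\mathcal S_{\mathcal E}$
and the target tuple was generic in~$\mathcal S_{\mathcal E}$,
every admissible transformation of~$\mathcal E$ is the restriction of the projection of an element of~$G^\sim_{\mathcal E}$
parameterized as in~\eqref{eq:PointTransInEA}, proving normalization in the usual sense.

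The main obstacle is the splitting step: one must organize the chain-rule expressions carefully enough
to identify the coefficients in $u_{xx}$ and $u_x u_{xx}$ whose vanishing kills the $x$- and $u$-dependences of~$T$
and the $u$-dependence of~$X$; once these reductions are in place, the derivation of the explicit formulas
and the check of normalization are straightforward.
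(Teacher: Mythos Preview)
The paper does not prove this proposition; it is quoted from~\cite{popo2008a} as a known result, so there is no ``paper's own proof'' to compare against. Your direct-method outline is the standard route and is essentially correct: for second-order evolution equations one shows $T=T(t)$, then $X_u=0$, then $U_{uu}=0$, and the remaining splitting yields the transformation rules for the arbitrary elements together with normalization.

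One small inaccuracy worth fixing: after you have $T=T(t)$ and $X=X(t,x)$, the coefficient of~$u_{xx}$ in the determining identity is \emph{not} a multiple of~$U_{uu}$. It equals $U_uA/T_t-\tilde A\,U_u/X_x^2$, which (using $U_u\ne0$ from invertibility) produces the formula $\tilde A=X_x^2A/T_t$ rather than $U_{uu}=0$. The conclusion $U_{uu}=0$ comes solely from the coefficient of~$u_x^{\,2}$, since after the previous reductions the only $u_x^{\,2}$ contribution arises from $\tilde A\,\tilde u_{\tilde x\tilde x}$ and is proportional to $U_{uu}$ with nonzero factor~$\tilde A/X_x^{\,2}$. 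With that correction the rest of your bookkeeping goes through as you describe.
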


The normalization of the class~$\mathcal E$ means that its equivalence groupoid coincides
with the action groupoid of the pseudogroup~$G^\sim_{\mathcal E}$.

\begin{theorem}\label{thm:HeatEqSymGroup}
The point symmetry pseudogroup $G$
of the (1+1)-dimensional linear heat equation~\eqref{eq:LinHeat}
is constituted by the point transformations of the form
\begin{gather}\label{eq:HeatEqSymGroup}
\begin{split}
&\tilde t=\frac{\alpha t+\beta}{\gamma t+\delta},\quad
\tilde x=\frac{x+\lambda_1t+\lambda_0}{\gamma t+\delta},\\[.5ex]
&\tilde u=\sigma\sqrt{|\gamma t+\delta|}\,
\exp\!\left(\frac{\gamma (x+\lambda_1t+\lambda_0)^2}{4(\gamma t+\delta)}-\frac{\lambda_1}2x-\frac{\lambda_1^2}4t\right)
\big(u+h(t,x)\big),
\end{split}
\end{gather}
where $\alpha$, $\beta$, $\gamma$, $\delta$, $\lambda_1$, $\lambda_0$ and $\sigma$ are arbitrary constants with $\alpha\delta-\beta\gamma=1$ and $\sigma\ne0$,
and $h$ is an arbitrary solution of~\eqref{eq:LinHeat}. 
\end{theorem}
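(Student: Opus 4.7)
The plan is to invoke Proposition~\ref{prop:EquivalenceGroupE}. Since the class~$\mathcal E$ is normalized in the usual sense and the heat equation~\eqref{eq:LinHeat} corresponds to the arbitrary-element tuple $\theta=(1,0,0,0)$, every element of the point symmetry pseudogroup~$G$ arises from an admissible transformation of~$\mathcal E$ whose source and target tuples both equal $(1,0,0,0)$. The proof therefore reduces to specializing the parameterization~\eqref{eq:PointTransInEA} by imposing $A=\tilde A=1$ and $B=C=D=\tilde B=\tilde C=\tilde D=0$, and then integrating the resulting overdetermined system for $T$, $X$, $U^1$ and $U^0$.

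The condition $\tilde A=1$ yields $X_x^2=T_t$; as the right-hand side depends only on~$t$, so does~$X_x$, hence $X$ is affine in~$x$, $X=a(t)x+\xi(t)$ with $a(t)^2=T_t$. Substituting this into $\tilde B=0$ and using $X_{xx}=0$, I would integrate $U^1_x/U^1=-(a'x+\xi')/(2a)$ in $x$ to obtain the Gaussian form
\[
U^1=\mu(t)\exp\!\left(-\frac{a'}{4a}x^2-\frac{\xi'}{2a}x\right)
\]
for some nowhere-vanishing $\mu(t)$. The principal step is then $\tilde C=0$, which is equivalent to $(\p_t-\p_{xx})(1/U^1)=0$. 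Taking logarithms and equating the coefficients of $x^2$, $x^1$ and $x^0$ in the ensuing polynomial identity, I obtain three ODEs for $a$, $\xi$ and~$\mu$. The $x^2$-equation simplifies to $(\ln a)''=((\ln a)')^2$ and integrates to $a=1/(\gamma t+\delta)$ for constants $\gamma,\delta$ (the degenerate case $\gamma=0$ included). Re-integrating $T_t=a^2$ gives $T=(\alpha t+\beta)/(\gamma t+\delta)$, and the unimodularity condition $\alpha\delta-\beta\gamma=1$ emerges precisely as the compatibility of this expression with $T_t=1/(\gamma t+\delta)^2$. The $x$-coefficient ODE forces $\xi=(\lambda_1 t+\lambda_0)/(\gamma t+\delta)$, while the constant-term equation fixes $\mu(t)$ up to a free nonzero factor~$\sigma$; after rearranging and absorbing inessential constants into~$\sigma$, the quadratic, linear, and $t$-only contributions to the exponential in $\tilde u$ assemble into exactly the expression in~\eqref{eq:HeatEqSymGroup}, together with the prefactor $\sigma\sqrt{|\gamma t+\delta|}$.

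Finally, the relation $\tilde D=0$ collapses to $(\p_t-\p_{xx})(U^0/U^1)=0$, so $h:=U^0/U^1$ is an arbitrary solution of~\eqref{eq:LinHeat}; writing $\tilde u=U^1(u+h)$ completes the derivation of the form~\eqref{eq:HeatEqSymGroup}. The converse is a routine substitution. I expect the only delicate points to be the integration of the nonlinear ODE $(\ln a)''=((\ln a)')^2$ and the bookkeeping that shows how the quadratic, linear and constant contributions to the exponent in $\tilde u$, together with the factor coming from~$\mu$, recombine into the compact Gaussian $\exp\bigl(\gamma(x+\lambda_1 t+\lambda_0)^2/(4(\gamma t+\delta))-\lambda_1 x/2-\lambda_1^2 t/4\bigr)$ claimed in the theorem.
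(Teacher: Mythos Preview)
Your proposal is correct and follows essentially the same route as the paper's proof: both start from Proposition~\ref{prop:EquivalenceGroupE}, specialize the arbitrary-element tuple to $(1,0,0,0)$, and successively integrate the resulting conditions on $T$, $X$, $U^1$, $U^0$, with the $\tilde C=0$ equation splitting in powers of~$x$ to yield a linear fractional~$T$ (equivalently, your ODE $(\ln a)''=((\ln a)')^2$ is the Schwarzian condition the paper writes for~$T$). The only cosmetic difference is that the paper parameterizes $X_x=\varepsilon\sqrt{T_t}$ and then absorbs the sign~$\varepsilon$ into the residual $\pm1$ freedom of $(\alpha,\beta,\gamma,\delta)$, whereas you work directly with $a(t)=1/(\gamma t+\delta)$; this is the same sign bookkeeping in a different notation.
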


\begin{proof}
The linear heat equation~\eqref{eq:LinHeat}
corresponds to the value $(1,0,0,0)=:\theta^0$
of the arbitrary-ele\-ment tuple $\theta=(A,B,C,D)$ of class $\mathcal E$.
Its vertex group $\mathcal G_{\theta^0}:=\mathcal G^\sim_{\mathcal E}(\theta^0,\theta^0)$
is the set of admissible transformations of the class~$\mathcal E$ with~$\theta^0$ as both their source and target,
$\mathcal G_{\theta^0}=\{(\theta^0,\Phi,\theta^0)\mid\Phi\in G\}$.
This argument allows us to use Proposition~\ref{prop:EquivalenceGroupE} in the course of computing the pseudogroup~$G$.

We should integrate the equations~\eqref{eq:PointTransInEA},
where both the source value~$\theta$ of the arbitrary-element tuple and its target value~$\tilde\theta$
coincide with $\theta^0$,
with respect to the parameter functions $T$, $X$, $U^1$ and $U^0$.
After a simplification, the equations~\eqref{eq:GenFormOfPointTransInEB} take the form
\begin{gather}\label{eq:PointTransInETransPart}
X_x^2=T_t,\quad
\frac{U^1_x}{U^1}=-\frac{X_t}{2X_x},\quad
0=\frac{U^1}{T_t}\mathrm E\frac1{U^1},
\end{gather}
where ${\rm E}:=\p_t-\p_{xx}$.
The first equation in~\eqref{eq:PointTransInETransPart} implies that $T_t>0$,
and the first two equations in~\eqref{eq:PointTransInETransPart} can be easily integrated to
\begin{gather*}
X=\varepsilon\sqrt{T_t}\,x+X^0(t),\quad
U^1=\phi(t)\exp\left(-\frac{T_{tt}}{8T_t}x^2-\frac\varepsilon2\frac{X^0_t}{\sqrt{T_t}}x\right),
\end{gather*}
where $\varepsilon$ takes values in $\{-1,1\}$, and $\phi$ is a nonvanishing smooth function of $t$.
Substituting the expression for~$U^1$ into the third equation from~\eqref{eq:PointTransInETransPart}
and subsequently splitting the obtained equation with respect to powers of $x$,
we derive three equations, $T_{ttt}/T_t-\frac32(T_{tt}/T_t)^2=0$, $X^0_{tt}T_t-X^0_tT_{tt}=0$ and
$4T_t\phi_t+\big(T_{tt}+(X^0_t)^2\big)\phi=0$,
respectively considering them as equations for~$T$, $X^0$ and~$\phi$.
The first equation means that the Schwarzian derivative of~$T$ is zero.
Therefore, $T$ is a linear fractional function of~$t$, $T=(\alpha t+\beta)/(\gamma t+\delta)$.
Since the constant parameters~$\alpha$, $\beta$, $\gamma$ and~$\delta$ are defined up to a constant nonzero multiplier
and $T_t>0$, i.e., $\alpha\delta-\beta\gamma>0$, we can assume that $\alpha\delta-\beta\gamma=1$.
Then these parameters are still defined up to a multiplier in $\{-1,1\}$,
and hence we can choose them in such a way that $\varepsilon=\sgn(\gamma t+\delta)$.
The equation for~$X^0$ simplifies to the equation $(\gamma t+\delta)X^0_{tt}+2\gamma X^0_t=0$,
whose general solution is $X^0=(\lambda_1t+\lambda_0)/(\gamma t+\delta)$
with arbitrary constants~$\lambda_0$ and~$\lambda_1$.
The equation for~$\phi$ takes the form
${4(\gamma t+\delta)^2\phi_t-2\gamma(\gamma t+\delta)\phi+(\delta\lambda_1-\gamma\lambda_0)^2\phi=0}$
and integrates, in view of~$\phi\ne0$,
to $\phi=\sigma\sqrt{|\gamma t+\delta|}\,\exp\big(-\frac14(\delta\lambda_1-\gamma\lambda_0)X^0\big)$ with $\sigma\in\mathbb R\setminus\{0\}$.
Finally, the equation~\eqref{eq:GenFormOfPointTransInEC} takes the form $(\p_t-\p_{xx})(U^0/U^1)=0$.
Therefore, $U^0=U^1h$, where $h=h(t,x)$ is an arbitrary solution of~\eqref{eq:LinHeat}.
\end{proof}

\begin{remark}
Proposition~2 in~\cite{popo2008d} implies that
the contact symmetry pseudogroup $G_{\rm c}$ of the equation~\eqref{eq:LinHeat}
is the first prolongation of the group~$G$, $G_{\rm c}=G_{(1)}$.
\end{remark}

To avoid complicating the structure of the pseudogroup~$G$,
we should properly interpret transformations of the form~\eqref{eq:HeatEqSymGroup} and their composition.
Given a fixed transformation~$\Phi$ of the form~\eqref{eq:HeatEqSymGroup},
it is natural to assume that
its domain $\mathop{\rm dom}\Phi$ coincides with the relative complement
of the set $M_{\gamma\delta}:=\{(t,x,u)\in\mathbb R^3\mid\gamma t+\delta=0\}$ with respect to $\mathop{\rm dom}h\times\mathbb R_u$,
$\mathop{\rm dom}\Phi=(\mathop{\rm dom}h\times\mathbb R_u)\setminus M_{\gamma\delta}$.
Here $\mathop{\rm dom}F$ denotes the domain of a function~$F$.
Recall that $(\gamma,\delta)\ne(0,0)$, and note that the set~$M_{\gamma\delta}$ is
the hyperplane defined by the equation $t=-\delta/\gamma$ in $\mathbb R^3_{t,x,u}$ if $\gamma\ne0$,
and $M_{\gamma\delta}=\varnothing$ otherwise.
Instead of the standard transformation composition, we use a modified composition
for transformations of the form~\eqref{eq:HeatEqSymGroup}.
More specifically, the domain of the standard composition $\Phi_1\circ\Phi_2:=\tilde\Phi$ of transformations~$\Phi_1$ and~$\Phi_2$
is usually defined as the preimage of the domain of~$\Phi_1$ with respect to~$\Phi_2$,
$\mathop{\rm dom}\tilde\Phi=\Phi_2^{-1}(\mathop{\rm dom}\Phi_1)$.
For transformations~$\Phi_1$ and~$\Phi_2$ of the form~\eqref{eq:HeatEqSymGroup}, we have
\smash{$\mathop{\rm dom}\tilde\Phi=(\mathop{\rm dom}\tilde h\times\mathbb R_u)
\setminus (M_{\gamma_2\delta_2}\cup M_{\tilde\gamma\tilde\delta})$}
where
$\tilde\gamma=\gamma_1\alpha_2+\delta_1\gamma_2$,
$\tilde\delta=\gamma_1\beta _2+\delta_1\delta_2$,
$\mathop{\rm dom}\tilde h=\big((\pi_*\Phi_2)^{-1}\mathop{\rm dom}h^1\big)\cap\mathop{\rm dom}h^2$,
$\pi$ is the natural projection of $\mathbb R^3_{t,x,u}$ onto~$\mathbb R^2_{t,x}$,
and the parameters with indices~1 and~2 and tildes correspond~$\Phi_1$, $\Phi_2$ and~$\tilde\Phi$, respectively.
As the \emph{modified composition} $\Phi_1\circ^{\rm m}\Phi_2$ of transformations~$\Phi_1$ and~$\Phi_2$,
we take the continuous extension of $\Phi_1\circ\Phi_2$ to the set
\[\smash{\mathop{\rm dom}\nolimits^{\rm m}\tilde\Phi:=(\mathop{\rm dom}\tilde h\times\mathbb R_u)\setminus M_{\tilde\gamma\tilde\delta}},\]
i.e., \smash{$\mathop{\rm dom}(\Phi_1\circ^{\rm m}\Phi_2)=\mathop{\rm dom}^{\rm m}\tilde\Phi$}.
In other words, we set $\Phi_1\circ^{\rm m}\Phi_2$ to be the transformation of the form~\eqref{eq:HeatEqSymGroup}
with the same parameters as in $\Phi_1\circ\Phi_2$ and with the natural domain.
It is obvious that we redefine $\Phi_1\circ\Phi_2$ on the set
\smash{$(\mathop{\rm dom}\tilde h\times\mathbb R_u)\cap M_{\gamma_2\delta_2}$} if $\gamma_1\gamma_2\ne0$;
otherwise $\mathop{\rm dom}\nolimits^{\rm m}\tilde\Phi=\mathop{\rm dom}\tilde\Phi$
and the extension is trivial.
A disadvantage of the above interpretation is that
it is then common for elements of~$G$ to have different signs of their Jacobians
on different connected components of their domains,
but the benefits we receive due to it are more essential.

Now we can analyze the structure of~$G$.
The point transformations of the form
\[
\mathscr Z(f)\colon\quad \tilde t=t,\quad \tilde x=x,\quad \tilde u=u+f(t,x),
\]
where the parameter function $f=f(t,x)$ is an arbitrary solution of the equation~\eqref{eq:LinHeat},
are associated with the linear superposition of solutions of this equation
and, thus, can be considered as trivial.
They constitute the normal pseudosubgroup~$G^{\rm lin}$ of the pseudogroup~$G$.
The pseudogroup~$G$ splits over~$G^{\rm lin}$, $G=G^{\rm ess}\ltimes G^{\rm lin}$,
where the subgroup~$G^{\rm ess}$ of~$G$ consists of the transformations of the form~\eqref{eq:HeatEqSymGroup} with $f=0$
and thus is a six-dimensional Lie group.
We call the subgroup~$G^{\rm ess}$ the \emph{essential point symmetry group} of the equation~\eqref{eq:LinHeat}.
This subgroup itself splits over $R$, $G^{\rm ess}=F\ltimes R$.
Here $R$ and~$F$ are the normal subgroup and the subgroup of~$G^{\rm ess}$
that are singled out by the constraints $\alpha=\delta=1$, $\beta=\gamma=0$ and $\lambda_1=\lambda_0=0$, $\sigma=1$,
respectively.
They are isomorphic to the groups ${\rm H}(1,\mathbb R)\times\mathbb Z_2$ and ${\rm SL}(2,\mathbb R)$,
and their Lie algebras coincide with~$\mathfrak r\simeq{\rm h}(1,\mathbb R)$ and~$\mathfrak f\simeq{\rm sl}(2,\mathbb R)$.
Here ${\rm H}(1,\mathbb R)$ denotes the rank-one real Heisenberg group.
The normal subgroups~$R_{\rm c}$ and~$R_{\rm d}$ of~$R$ that are isomorphic to~${\rm H}(1,\mathbb R)$ and~$\mathbb Z_2$
are constituted by the elements of~$R$ with parameter values satisfying the constraints
$\sigma>0$ and $\lambda_0=\lambda_1=0$, $\sigma\in\{-1,1\}$, respectively.
The isomorphisms of~$F$ to ${\rm SL}(2,\mathbb R)$ and of~$R_{\rm c}$ to~${\rm H}(1,\mathbb R)$ are established by the correspondences
\[
\varrho_1=(\alpha,\beta,\gamma,\delta)_{\alpha\delta-\beta\gamma=1}\mapsto
\begin{pmatrix}
\alpha & \beta \\
\gamma  & \delta
\end{pmatrix},
\qquad
(\lambda_1,\lambda_0,\sigma)_{,\;\sigma>0}\mapsto
\begin{pmatrix}
1 & -\tfrac12\lambda_1 & \ln\sigma\\
0 & 1                  & \lambda_0\\
0 & 0                  & 1
\end{pmatrix}.
\]
The isomorphism $\varrho_1$ is in fact the standard two-dimensional representation of ${\rm SL}(2,\mathbb R)$.  
Thus, $F$ and~$R_{\rm c}$ are connected subgroups of~$G^{\rm ess}$, but~$R_{\rm d}$ is not.
The natural conjugacy action of the subgroup~$F$ on the normal subgroup~$R$ is given,
in the parameterization~\eqref{eq:HeatEqSymGroup} of~$G$, by
$(\tilde\lambda_1,\tilde\lambda_0,\tilde\sigma)=(\lambda_1,\lambda_0,\sigma)A$
with the matrix $A=\varrho_1(\alpha,\beta,\gamma,\delta)\oplus(1)$.
Summing up, we have that
\[G^{\rm ess}\simeq \big({\rm SL}(2,\mathbb R)\ltimes_{\varphi}{\rm H}(1,\mathbb R)\big)\times\mathbb Z_2,\]
where the antihomomorphism
$\varphi\colon{\rm SL}(2,\mathbb R)\to{\rm Aut}({\rm H}(1,\mathbb R))$ is defined, in the chosen local coordinates, by
$
\varphi(\alpha,\beta,\gamma,\delta)=(\lambda_1,\lambda_0,\sigma)
\mapsto(\alpha\lambda_1+\gamma\lambda_0,\beta\lambda_1+\delta\lambda_0,\sigma)=(\lambda_1,\lambda_0,\sigma)A.
$

The transformations from the one-parameter subgroups of~$G^{\rm ess}$ that are generated by the basis elements of~$\mathfrak g^{\rm ess}$
given in~\eqref{eq:LinHeatEssAlg} are of the following form:
\[\arraycolsep=2ex
\begin{array}{llll}
\mathscr P^t(\epsilon)\colon    & \tilde t=t+\epsilon,            & \tilde x=x,                   & \tilde u=u,\\[.6ex]
\mathscr D  (\epsilon)\colon    & \tilde t={\rm e}^{2\epsilon}t,  & \tilde x={\rm e}^\epsilon x,  & \tilde u={\rm e}^{-\frac12\epsilon}u,\\
\mathscr K(\epsilon)  \colon    & \tilde t=\dfrac t{1-\epsilon t},
& \tilde x=\dfrac x{1-\epsilon t}, &\tilde u=\sqrt{|1-\epsilon t|}{\rm e}^{\frac{\epsilon x^2}{4(\epsilon t-1)}}u,\\[1ex]
\mathscr G^x(\epsilon)\colon    & \tilde t=t,              & \tilde x=x+\epsilon t,    & \tilde u={\rm e}^{-\frac14(\epsilon^2t+2\epsilon x)}u,\\[1ex]
\mathscr P^x(\epsilon)\colon    & \tilde t=t,              & \tilde x=x+\epsilon,      & \tilde u=u,\\[1ex]
\mathscr I  (\epsilon)\colon    & \tilde t=t,              & \tilde x=x,               & \tilde u={\rm e}^\epsilon u,
\end{array}
\]
where $\epsilon$ is an arbitrary constant.
At the same time, using this basis of~$\mathfrak g^{\rm ess}$ in the course of studying the structure of the group~$G^{\rm ess}$
hides some of its important properties and complicates its study.

Although the pushforward of the pseudogroup~$G$ by the natural projection of~$\mathbb R^3_{t,x,u}$ onto~$\mathbb R_t$
coincides with the group of linear fractional transformations of~$t$ and is thus isomorphic to the group ${\rm PSL}(2,\mathbb R)$,
the subgroup~$F$ of~$G$ is isomorphic to the group ${\rm SL}(2,\mathbb R)$,
and its Iwasawa decomposition is given by the one-parameter subgroups of~$G$
respectively generated by the vector fields~$\mathcal Q^+:=\mathcal P^t+\mathcal K$, $\mathcal D$ and $\mathcal P^t$.
The first subgroup, which is associated with~$\mathcal Q^+$, consists of the point transformations
\[
\mathscr Q^+(\epsilon)\colon\ \
\tilde t=\frac{\sin\epsilon+t\cos\epsilon}{\cos\epsilon-t\sin\epsilon},
\quad
\tilde x=\frac x{\cos\epsilon-t\sin\epsilon},
\quad
\tilde u=\sqrt{|\cos\epsilon-t\sin\epsilon|}\,{\rm e}^{\frac{-x^2\sin\epsilon}{4(\cos\epsilon-t\sin\epsilon)}
}u,
\]
where $\epsilon$ is an arbitrary constant parameter, which is defined by the corresponding transformation
up to a summand $2\pi k$, $k\in\mathbb Z$.
The Jacobian of~$\mathscr Q^+(\epsilon)$ is positive and negative for all values of $(t,x,u)$
if $\epsilon=0$ and $\epsilon=\pi$, respectively.
For $\epsilon\in(0,\pi)\cup(\pi,2\pi)$, the transformation~$\mathscr Q^+(\epsilon)$ is not defined if $t=\cot\epsilon$,
and for the other values of $(t,x,u)$ the sign of its Jacobian coincides with $\sgn(\cos\epsilon-t\sin\epsilon)$.

The equation~\eqref{eq:LinHeat} is invariant with respect to the involution~$\mathscr J$ alternating the sign of~$x$
and the transformation~$\mathscr K'$ inverting~$t$,
\begin{gather}\label{eq:LinHeatTransJK'}
\smash{\mathscr J:=(t,x,u)\mapsto(t,-x,u),\qquad
\mathscr K'\colon\ \ \tilde t=-\dfrac1t,\ \ \tilde x=\dfrac xt,\ \
\tilde u=\sqrt{|t|}\,{\rm e}^{\frac{x^2}{4t}}u.}
\end{gather}
Note that $(\mathscr K')^2=\mathscr J$.
In the context of the one-parameter subgroups of~$G^{\rm ess}$ (resp.\ of~$G$)
that are generated by the basis elements of~$\mathfrak g^{\rm ess}$ listed in~\eqref{eq:LinHeatEssAlg},
both the transformations~$\mathscr J$ and~$\mathscr K'$ look as discrete point symmetry transformations of~\eqref{eq:LinHeat},
but in fact, this is not the case under the above interpretation of the group multiplication in~$G$.
Even though the Jacobian of the involution~$\mathscr J$ is equal to $-1$ for all values of $(t,x,u)$,
this involution belongs to the one-parameter subgroup~$\{\mathscr Q^+(\epsilon)\}$ of~$G$,
$\mathscr J=\mathscr Q^+(\pi)$, and thus it belongs to the identity component of the pseudogroup~$G$.
A similar claim is true for the transformation~$\mathscr K'=\mathscr Q^+(-\frac12\pi)$, the sign of whose Jacobian coincides with $\sgn t$.

\begin{corollary}\label{cor:LinHeatDiscretePointSyms}
A complete list of discrete point symmetry transformations of the linear (1+1)-dimensional heat equation~\eqref{eq:LinHeat}
that are independent up to composing with each other and with continuous point symmetry transformations of this equation
is exhausted by the single involution~$\mathscr I'$ alternating the sign of~$u$,
\[\mathscr I':=(t,x,u)\mapsto(t,x,-u).\]
Thus, the factor group of the point symmetry pseudogroup~$G$
with respect to its identity component~$G_{\rm id}$ is isomorphic to~$\mathbb Z_2$.
\end{corollary}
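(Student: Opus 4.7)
The plan is to read off the connected components of $G$ directly from the explicit parameterization given by Theorem~\ref{thm:HeatEqSymGroup}. An element of $G$ is determined by a tuple $(\alpha,\beta,\gamma,\delta,\lambda_1,\lambda_0,\sigma,h)$ with $\alpha\delta-\beta\gamma=1$, $\sigma\in\mathbb R\setminus\{0\}$, and $h$ an arbitrary solution of~\eqref{eq:LinHeat}, and the parameter space factors topologically as $\mathrm{SL}(2,\mathbb R)\times\mathbb R^2\times(\mathbb R\setminus\{0\})\times S_h$, where $S_h$ denotes the solution space of~\eqref{eq:LinHeat}. Since the group operation is continuous in these parameters (even under the modified composition $\circ^{\rm m}$), the number of connected components of $G$ equals the product of the numbers of components of the individual factors.

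Next I would verify connectedness factor by factor. The subgroup $F\simeq\mathrm{SL}(2,\mathbb R)$ is connected, as made concrete in the excerpt by the Iwasawa decomposition in terms of the continuous one-parameter subgroups $\{\mathscr Q^+(\epsilon)\}$, $\{\mathscr D(\epsilon)\}$, $\{\mathscr P^t(\epsilon)\}$, each of which passes through the identity. The $(\lambda_1,\lambda_0)$-factor $\mathbb R^2$ and the solution-space factor $S_h$ are real vector spaces, hence path-connected via the straight-line contractions $\tau\mapsto\tau(\lambda_1,\lambda_0)$ and $\tau\mapsto\tau h$; the latter contraction is a valid path in $G^{\rm lin}$ because $S_h$ is closed under scalar multiplication. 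Only the factor $\mathbb R\setminus\{0\}$ parameterizing $\sigma$ is disconnected, contributing exactly two components indexed by $\sgn\sigma$.

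Consequently $G$ has exactly two connected components, and the identity component $G_{\rm id}$ is the subset on which $\sigma>0$. Reading off~\eqref{eq:HeatEqSymGroup}, the involution $\mathscr I'$ corresponds to the parameter values $\alpha=\delta=1$, $\beta=\gamma=\lambda_1=\lambda_0=0$, $\sigma=-1$, $h=0$, so $\mathscr I'\notin G_{\rm id}$. Hence $G=G_{\rm id}\sqcup\mathscr I'\cdot G_{\rm id}$, which at once gives $G/G_{\rm id}\simeq\mathbb Z_2$ and shows that every element of $G$ is equivalent, modulo composition with continuous symmetries, either to the identity or to $\mathscr I'$, so the latter represents the unique nontrivial class of discrete symmetries.

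The only genuinely delicate point, which has already been settled in the discussion preceding the corollary, is that the apparent candidates $\mathscr J$ and $\mathscr K'$ from~\eqref{eq:LinHeatTransJK'} for independent discrete symmetries are in fact elements of $G_{\rm id}$ under the modified composition interpretation; namely $\mathscr J=\mathscr Q^+(\pi)$ and $\mathscr K'=\mathscr Q^+(-\tfrac12\pi)$, so no extra discrete symmetry is concealed in the $F$-factor, and no obstacle to the counting above arises.
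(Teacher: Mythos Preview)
Your argument is correct and follows essentially the same route as the paper. The paper phrases the decomposition group-theoretically, writing $G=G^{\rm ess}\ltimes G^{\rm lin}$ with $G^{\rm ess}=(F\ltimes R_{\rm c})\times R_{\rm d}$ and then observing that $G^{\rm lin}$, $F\simeq{\rm SL}(2,\mathbb R)$ and $R_{\rm c}\simeq{\rm H}(1,\mathbb R)$ are connected while $R_{\rm d}\simeq\mathbb Z_2$ is not; you instead read the same factorization directly off the parameter space $\mathrm{SL}(2,\mathbb R)\times\mathbb R^2\times(\mathbb R\setminus\{0\})\times S_h$ from Theorem~\ref{thm:HeatEqSymGroup}, but the substantive content---each factor except the one carrying~$\sigma$ is connected, and the key point that $\mathscr J,\mathscr K'\in G_{\rm id}$ has already been established---is the same.
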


\begin{proof}
It is obvious that the entire pseudosubgroup~$G^{\rm lin}$ is contained in the connected component of the identity transformation in~$G$.
The same claim holds for the subgroups~$F$ and~$R_{\rm c}$
in view of their isomorphisms to the groups ${\rm SL}(2,\mathbb R)$ and~${\rm H}(1,\mathbb R)$, respectively.
Therefore, without loss of generality, a complete list of independent discrete point symmetry transformations of~\eqref{eq:LinHeat}
can be assumed to consist of elements of the subgroup~$R_{\rm d}$.
Thus, the only discrete point symmetry transformation of~\eqref{eq:LinHeat}
that is independent in the above sense is the transformation~$\mathscr I'$,
and the identity component~$G_{\rm id}$ of~$G$ is constituted by transformations of the form~\eqref{eq:HeatEqSymGroup}
with $\sigma>0$.
\end{proof}

\begin{corollary}\label{cor:LinHeatZenterOfGess}
The center~${\rm Z}(G^{\rm ess})$ of the group~$G^{\rm ess}$
coincides with $\{\mathscr I(\epsilon)\}\sqcup\{\mathscr I'\circ\mathscr I(\epsilon)\}$.
\end{corollary}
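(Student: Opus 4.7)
The plan is to determine the center by direct computation with the explicit parameterization~\eqref{eq:HeatEqSymGroup} (with $h=0$) and a small, carefully chosen set of generators, then verify the converse by exploiting the fact that every element of~$G^{\rm ess}$ acts linearly on~$u$. I will write a general $\Phi\in G^{\rm ess}$ with parameters $(\alpha,\beta,\gamma,\delta,\lambda_1,\lambda_0,\sigma)$ and impose commutation with two one-parameter subgroups: $\mathscr P^x(\epsilon)$ and $\mathscr D(\epsilon)$. Two subgroups suffice because together they pin down all parameters except~$\sigma$; additional generators would be redundant.

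First, the identity $\Phi\circ\mathscr P^x(\epsilon)=\mathscr P^x(\epsilon)\circ\Phi$ applied to the $\tilde x$-component requires $\epsilon/(\gamma t+\delta)\equiv\epsilon$ in $(t,\epsilon)$, so $\gamma=0$ and $\delta=1$; the constraint $\alpha\delta-\beta\gamma=1$ then gives $\alpha=1$. Matching the $\tilde u$-components with these simplifications yields $\exp(-\lambda_1\epsilon/2)\equiv 1$, so $\lambda_1=0$. After this reduction, $\Phi$ acts as $(t,x,u)\mapsto(t+\beta,\,x+\lambda_0,\,\sigma u)$. Next, imposing $\Phi\circ\mathscr D(\epsilon)=\mathscr D(\epsilon)\circ\Phi$ on the $\tilde t$- and $\tilde x$-components gives $e^{2\epsilon}\beta=\beta$ and $e^\epsilon\lambda_0=\lambda_0$ for all $\epsilon$, hence $\beta=\lambda_0=0$. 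Thus any central element must act as $(t,x,u)\mapsto(t,x,\sigma u)$ for some $\sigma\ne 0$.

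For the converse, I will observe that every $\Psi\in G^{\rm ess}$ has the form $\Psi(t,x,u)=(T(t),X(t,x),U^1(t,x)\,u)$, so pointwise scaling of the $u$-slot commutes with~$\Psi$ on both sides. Concretely, if $\Phi(t,x,u)=(t,x,\sigma u)$, then both $\Phi\circ\Psi$ and $\Psi\circ\Phi$ send $(t,x,u)$ to $(T(t),X(t,x),\sigma U^1(t,x)\,u)$, so $\Phi$ is central. Matching $\sigma>0$ with $\mathscr I(\ln\sigma)$ and $\sigma<0$ with $\mathscr I'\circ\mathscr I(\ln|\sigma|)$ gives the claimed disjoint union $\{\mathscr I(\epsilon)\}\sqcup\{\mathscr I'\circ\mathscr I(\epsilon)\}$.

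I do not anticipate a genuine obstacle; the only subtlety worth noting is the modified composition introduced earlier, since in principle commutation must be checked on the natural (extended) domains rather than on preimages. However, for the candidates that survive the reduction one has $\gamma=0$, so $\mathop{\rm dom}\Phi=\mathbb R^3$; the modified composition then agrees with the ordinary one, and no domain issues arise when verifying centrality against arbitrary $\Psi$. The computations with $\mathscr P^x$ and $\mathscr D$ above are likewise on global domains, so the proof reduces to the two short parameter comparisons described.
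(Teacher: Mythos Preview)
Your proof is correct. The two key computations---commutation with $\mathscr P^x(\epsilon)$ forcing $\gamma=0$, $\delta=1$, $\alpha=1$, $\lambda_1=0$, and then commutation with $\mathscr D(\epsilon)$ forcing $\beta=\lambda_0=0$---are clean, and the converse via the linear $u$-action of every element of~$G^{\rm ess}$ is immediate. The remark about the modified composition is apposite: once $\gamma=0$ the domain is global, so the subtlety disappears precisely where it matters.

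Your route, however, differs genuinely from the paper's. The paper argues structurally from the decomposition $G^{\rm ess}=(F\ltimes R_{\rm c})\times R_{\rm d}$: the factor~$R_{\rm d}$ is obviously central; since $R_{\rm c}\simeq{\rm H}(1,\mathbb R)$ one has ${\rm Z}(R_{\rm c})=\{\mathscr I(\epsilon)\}$, which is checked to be central in all of~$G^{\rm ess}$; and because $F\simeq{\rm SL}(2,\mathbb R)$, any central element can have $F$-component only ${\rm id}$ or~$\mathscr J$, with~$\mathscr J$ then excluded by a single commutator with~$\mathscr P^x$. The paper's argument thus leans on the known centers of ${\rm SL}(2,\mathbb R)$ and ${\rm H}(1,\mathbb R)$ and on the semidirect-product structure already established; it explains \emph{why} the center is what it is in terms of the group architecture. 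Your argument is instead self-contained and computational: it needs no prior identification of the factors, only the explicit formula~\eqref{eq:HeatEqSymGroup} and two well-chosen one-parameter subgroups. That makes it more elementary and arguably more portable to other groups given explicitly, at the cost of not revealing the structural origin of the answer.
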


\begin{proof}
Given a group~$G$, by~${\rm Z}(G)$ we denote its center.
The decomposition $G^{\rm ess}=(F\ltimes R_{\rm c})\times R_{\rm d}$
and the obvious inclusion $R_{\rm d}\subseteq{\rm Z}(G^{\rm ess})$
jointly imply that ${\rm Z}(G^{\rm ess})=\big({\rm Z}(G^{\rm ess})\cap(F\ltimes R_{\rm c})\big)\times R_{\rm d}$.
Since $R_{\rm c}\simeq{\rm H}(1,\mathbb R)$, we have ${\rm Z}(R_{\rm c})=\{\mathscr I(\varepsilon)\}$.
It is easy to check that $\{\mathscr I(\varepsilon)\}\subseteq{\rm Z}(G^{\rm ess})$,
and thus ${\rm Z}(G^{\rm ess})\cap R_{\rm c}=\{\mathscr I(\varepsilon)\}$.
For any element from~${\rm Z}(G^{\rm ess})$, its $F$-component belongs to~${\rm Z}(F)$.
Since $F\simeq{\rm SL}(2,\mathbb R)$,
we have that ${\rm Z}(F)=\{{\rm id},\mathscr J\}$, where ${\rm id}$ is the identity element of~$G^{\rm ess}$.
However, $\mathscr J\circ\Phi\notin {\rm Z}(G^{\rm ess})$ for any $\Phi\in R_{\rm c}$
since, e.g., $\mathscr J\circ\Phi\circ\mathscr P^x(\epsilon)\ne\mathscr P^x(\epsilon)\circ\mathscr J\circ\Phi$
for $\epsilon\ne0$.
Therefore, ${\rm Z}(G^{\rm ess})={\rm Z}(R_{\rm c})\times R_{\rm d}$.
\end{proof}

In the notation of Theorem~\ref{thm:HeatEqSymGroup},
the most general form of the transformed counterpart of a given solution $u=f(t,x)$
under action of~$G$ (resp.\ $G_{\rm id}$) is
\begin{gather*}
\tilde u=\frac{\sigma}{\sqrt{|\gamma t-\alpha|}}\,
\exp\!\left(\frac{\gamma x^2}{4(\alpha-\gamma t)}
-\frac{\lambda_1x}{2(\alpha-\gamma t)}
+\frac{\lambda_1^2}4\frac{\delta t-\beta}{\alpha-\gamma t}
+\frac{\lambda_0\lambda_1}2
\right)
\\
\hphantom{\tilde u=}
\times f\left(\frac{\delta t-\beta}{\alpha-\gamma t},
\frac x{\alpha-\gamma t}-\lambda_1\frac{\delta t-\beta}{\alpha-\gamma t}-\lambda_0\right)
+h(t,x),
\end{gather*}
where in addition $\sigma>0$ for~$G_{\rm id}$, cf., e.g., \cite[p.~120]{olve1993A}.

Note that in view of Theorem~\ref{thm:HeatEqSymGroup},
formally pulling back the function~$u:=(t,x)\mapsto H(-t)$, where $H(t)$ denotes the Heaviside step function,
by the point symmetry transformation
$\Phi:=\mathscr J(\ln\sqrt{4\pi})\circ\mathscr K'$,
\[
\Phi\colon\quad
\tilde t=-\frac1t,\quad
\tilde x=\frac xt,\quad
\tilde u=\sqrt{4\pi|t|}\exp\left(\frac{x^2}{4t}\right)u,
\]
we obtain the well-known fundamental solution~$F$ of~\eqref{eq:LinHeat},
\smash{$F(t,x)=\dfrac{H(t)}{\sqrt{4\pi|t|}}\exp\left(-\dfrac{x^2}{4t}\right)$}.

\section{Classifications of subalgebras}\label{sec:LinHeatClassIneqSubalgebras}

In spite of the unusualness of Corollary~\ref{cor:LinHeatDiscretePointSyms} and the claims in the paragraph before it,
they are well consistent
with the structure of the abstract Lie group that is isomorphic to~$G^{\rm ess}$ and
with the inner automorphism group ${\rm Inn}(\mathfrak g^{\rm ess})$ of~$\mathfrak g^{\rm ess}$.
More specifically, the algebra~$\mathfrak g^{\rm ess}$ is the Lie algebra of the group~$G^{\rm ess}$,
and the adjoint action of~$G^{\rm ess}$ on~$\mathfrak g^{\rm ess}$ coincides with ${\rm Inn}(\mathfrak g^{\rm ess})$.
In particular, under the suggested interpretation
we have ${\rm Ad}(\exp(\epsilon\mathcal Q^+))=\mathscr Q^+(\epsilon)_*$,
\begin{gather*}
\begin{array}{l}
\mathscr Q^+(\epsilon)_*\mathcal Q^-=\hphantom{-{}}\cos(2\epsilon)\mathcal Q^-+\sin(2\epsilon)\mathcal D,\\[.5ex]
\mathscr Q^+(\epsilon)_*\mathcal D\hphantom{^-}=-\sin(2\epsilon)\mathcal Q^-+\cos(2\epsilon)\mathcal D,
\end{array}\quad
\begin{array}{l}
\mathscr Q^+(\epsilon)_*\mathcal P^x=\hphantom{-{}}\cos(\epsilon)\mathcal P^x+\sin(\epsilon)\mathcal G^x,\\[.5ex]
\mathscr Q^+(\epsilon)_*\mathcal G^x=-\sin(\epsilon)\mathcal P^x+\cos(\epsilon)\mathcal G^x,
\end{array}
\end{gather*}
and the inner automorphisms associated with $\mathscr J=\mathscr Q^+(\pi)$ and $\mathscr K'=\mathscr Q^+(-\frac12\pi)$
allow one to map $\mathcal P^t-\mathcal G^x$ and $D-\mu\mathcal I$ to
$\mathcal P^t+\mathcal G^x$ and $D+\mu\mathcal I$, respectively.
Retaining these facts, we refine the classification of subalgebras of~$\mathfrak g^{\rm ess}$
or, equivalently, the special Galilei algebra
\[{\rm sl}(2,\mathbb R)\lsemioplus_{\rho_1\oplus \rho_0}{\rm h}(1,\mathbb R),\]
cf.\ \cite[Example~3.13]{olve1993A}, \cite[p.~531--535]{wint1990a} and~\cite{chou2001c}.%
\footnote{%
The closest to a correct optimal list of subalgebras of~$\mathfrak g^{\rm ess}$
is the list presented in~\cite{chou2001c}.
The only its inconvenience is
the missed constraint $\mu\in\mathbb R_{\geqslant0}$ from Lemma~\ref{lem:LinHeat1DSubalgs}.
}
The nonidentity pushforwards of the basis elements of $\mathfrak g^{\rm ess}$
by the elementary transformations from $G^{\rm ess}$ discussed in Section~\ref{sec:LinHeatPointSymGroup}
are, in addition to the above ones by $\mathscr Q^+(\epsilon)$, the following:
\begin{gather*}\arraycolsep=0ex
\begin{array}{l}
\mathscr P^t(\epsilon)_*\mathcal D  =\mathcal D-2\epsilon\mathcal P^t,\\[.75ex]
\mathscr P^t(\epsilon)_*\mathcal K  =\mathcal K-\epsilon\mathcal D+\epsilon^2\mathcal P^t,\\[.75ex]
\mathscr P^t(\epsilon)_*\mathcal G^x=\mathcal G^x-\epsilon\mathcal P^x,
\end{array}
\qquad
\begin{array}{l}
\mathscr K(\epsilon)_*\mathcal D  =\mathcal D+2\epsilon\mathcal K,\\[.75ex]
\mathscr K(\epsilon)_*\mathcal P^t=\mathcal P^t+\epsilon\mathcal D+\epsilon^2\mathcal K,\\[.75ex]
\mathscr K(\epsilon)_*\mathcal P^x=\mathcal P^x+\epsilon\mathcal G^x,
\end{array}
\\[2ex]\arraycolsep=0ex
\begin{array}{l}
\mathscr D(\epsilon)_*\mathcal P^t=e^{2\epsilon}\mathcal P^t,\\[1ex]	
\mathscr D(\epsilon)_*\mathcal K  =e^{-2\epsilon}\mathcal K,
\end{array}\qquad
\begin{array}{l}
\mathscr D(\epsilon)_*\mathcal G^x=e^{-\epsilon}\mathcal G^x,\\[1ex]
\mathscr D(\epsilon)_*\mathcal P^x=e^{\epsilon}\mathcal P^x,
\end{array}
\\[2ex]\arraycolsep=0ex	
\begin{array}{l}
\mathscr G^x(\epsilon)_*\mathcal P^t=\mathcal P^t+\epsilon\mathcal P^x-\tfrac14\epsilon^2\mathcal I,\\[1ex]
\mathscr G^x(\epsilon)_*\mathcal D  =\mathcal D+\epsilon\mathcal G^x,\\[1ex]
\mathscr G^x(\epsilon)_*\mathcal P^x=\mathcal P^x-\tfrac12\epsilon\mathcal I,
\end{array}
\qquad
\begin{array}{l}
\mathscr P^x(\epsilon)_*\mathcal D  =\mathcal D-\epsilon\mathcal P^x,\\[1ex]
\mathscr P^x(\epsilon)_*\mathcal K  =\mathcal K-\epsilon\mathcal G^x-\tfrac14\epsilon^2\mathcal I,\\[1ex]
\mathscr P^x(\epsilon)_*\mathcal G^x=\mathcal G^x+\tfrac12\epsilon\mathcal I,
\end{array}
\\[2ex]
\mathscr J_*(\mathcal G^x,\mathcal P^x)=(-\mathcal G^x,-\mathcal P^x),
\qquad
\mathscr K'_*(\mathcal P^t,\mathcal D,\mathcal K,\mathcal G^x,\mathcal P^x)
=(\mathcal K,-\mathcal D,\mathcal P^t,\mathcal P^x,-\mathcal G^x).
\end{gather*}

\begin{lemma}\label{lem:LinHeat1DSubalgs}
A complete list of inequivalent proper subalgebras of the algebra~$\mathfrak g^{\rm ess}$
is exhausted by the following subalgebras, where $\delta\in\{-1,0,1\}$, $\mu\in\mathbb R_{\geqslant0}$ and $\nu\in\mathbb R$:
\begin{gather*}
1{\rm D}\colon\
\mathfrak s_{1.1}       =\langle\mathcal P^t+\mathcal G^x\rangle,\ \
\mathfrak s_{1.2}^\delta=\langle\mathcal P^t+\delta\mathcal I\rangle,\ \
\mathfrak s_{1.3}^\mu   =\langle\mathcal D+\mu\mathcal I\rangle,\ \
\mathfrak s_{1.4}^\nu   =\langle\mathcal P^t+\mathcal K+\nu\mathcal I\rangle,\\
\hphantom{1{\rm D}\colon\ }
\mathfrak s_{1.5}       =\langle\mathcal P^x\rangle,\ \
\mathfrak s_{1.6}       =\langle\mathcal I\rangle,
\\[1ex]
2{\rm D}\colon\
\mathfrak s_{2.1}^\nu   =\langle\mathcal P^t,\mathcal D+\nu\mathcal I\rangle,\ \
\mathfrak s_{2.2}       =\langle\mathcal P^t+\mathcal G^x,\mathcal I \rangle,\ \
\mathfrak s_{2.3}^\delta=\langle\mathcal P^t+\delta\mathcal I,\mathcal P^x \rangle,\ \
\mathfrak s_{2.4}       =\langle\mathcal P^t,\mathcal I \rangle,\\
\hphantom{2{\rm D}\colon\ }
\mathfrak s_{2.5}^\nu   =\langle\mathcal D+\nu\mathcal I,\mathcal P^x\rangle,\ \
\mathfrak s_{2.6}       =\langle\mathcal D,\mathcal I\rangle,\ \
\mathfrak s_{2.7}       =\langle\mathcal P^t+\mathcal K,\mathcal I\rangle,\ \
\mathfrak s_{2.8}       =\langle\mathcal P^x,\mathcal I\rangle,
\\[1ex]
3{\rm D}\colon\
\mathfrak s_{3.1}    =\langle\mathcal P^t,\mathcal D,\mathcal K\rangle,\ \
\mathfrak s_{3.2}^\nu=\langle\mathcal P^t,\mathcal D+\nu\mathcal I,\mathcal P^x\rangle,\ \
\mathfrak s_{3.3}    =\langle\mathcal P^t,\mathcal D,\mathcal I\rangle,\\
\hphantom{3{\rm D}\colon\ }
\mathfrak s_{3.4}    =\langle\mathcal P^t+\mathcal G^x,\mathcal P^x,\mathcal I\rangle,\ \
\mathfrak s_{3.5}    =\langle\mathcal P^t,\mathcal P^x,\mathcal I\rangle,\ \
\mathfrak s_{3.6}    =\langle\mathcal D,\mathcal P^x,\mathcal I\rangle,\ \
\mathfrak s_{3.7}    =\langle\mathcal G^x,\mathcal P^x,\mathcal I\rangle,
\\[1ex]
4{\rm D}\colon\
\mathfrak s_{4.1}    =\langle\mathcal P^t,\mathcal D,\mathcal K,\mathcal I\rangle,\ \
\mathfrak s_{4.2}    =\langle\mathcal P^t,\mathcal D,\mathcal P^x,\mathcal I\rangle,\ \
\mathfrak s_{4.3}    =\langle\mathcal P^t,\mathcal G^x,\mathcal P^x,\mathcal I\rangle,\\
\hphantom{4{\rm D}\colon\ }
\mathfrak s_{4.4}    =\langle\mathcal D,\mathcal G^x,\mathcal P^x,\mathcal I\rangle,\ \
\mathfrak s_{4.5}    =\langle\mathcal P^t+\mathcal K,\mathcal G^x,\mathcal P^x,\mathcal I\rangle,
\\[1ex]
5{\rm D}\colon\
\mathfrak s_{5.1}    =\langle\mathcal P^t,\mathcal D,\mathcal G^x,\mathcal P^x,\mathcal I\rangle.
\end{gather*}
\end{lemma}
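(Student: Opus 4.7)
The plan is to run a projection-based classification using the Levi decomposition $\mathfrak g^{\rm ess}=\mathfrak f\lsemioplus\mathfrak r$. Since $\mathfrak r$ is an ideal, $\pi_{\mathfrak f}\colon\mathfrak g^{\rm ess}\to\mathfrak f$ is a Lie algebra homomorphism, and because the $G^{\rm ess}$-action on $\mathfrak g^{\rm ess}$ coincides with $\mathrm{Inn}(\mathfrak g^{\rm ess})$ (as noted in the paragraph preceding the lemma), $\pi_{\mathfrak f}$ is $G^{\rm ess}$-equivariant. Hence for any subalgebra $\mathfrak s\leqslant\mathfrak g^{\rm ess}$, the $G^{\rm ess}$-conjugacy class of $\pi_{\mathfrak f}(\mathfrak s)\leqslant\mathfrak f\simeq\mathrm{sl}(2,\mathbb R)$ is an invariant, and a standard Jordan-form argument, together with the fact that $\mathscr P^t(\epsilon)_*,\mathscr D(\epsilon)_*,\mathscr K(\epsilon)_*,\mathscr Q^+(\epsilon)_*$ and $\mathscr K'_*$ realize the full inner-automorphism group of $\mathfrak f$, produces the representatives $\{0\}$; $\langle\mathcal P^t\rangle$, $\langle\mathcal D\rangle$, $\langle\mathcal P^t+\mathcal K\rangle$; $\langle\mathcal P^t,\mathcal D\rangle$; and $\mathfrak f$.

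Next, for each such $\mathfrak k$ and each target dimension $d\in\{1,\dots,5\}$ I would list subalgebras $\mathfrak s$ with $\pi_{\mathfrak f}(\mathfrak s)=\mathfrak k$ and $\dim\mathfrak s=d$, using $\dim\mathfrak s=\dim\mathfrak k+\dim(\mathfrak s\cap\mathfrak r)$: fix a basis of $\mathfrak k$, lift each basis vector by adjoining an arbitrary element of $\mathfrak r$, and then adjoin a basis of the subspace $\mathfrak s\cap\mathfrak r$. The closure conditions $[\mathfrak s,\mathfrak s]\subseteq\mathfrak s$ force linear relations among the $\mathfrak r$-components, and the residual stabilizer of $\mathfrak k$ in $G^{\rm ess}$ together with the pushforwards from $R$ simplifies the surviving components to the normal forms displayed. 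As a prototype, for $\mathfrak k=\langle\mathcal D\rangle$ the pushforwards $\mathscr G^x(\epsilon)_*\mathcal D=\mathcal D+\epsilon\mathcal G^x$ and $\mathscr P^x(\epsilon)_*\mathcal D=\mathcal D-\epsilon\mathcal P^x$ annihilate the $\mathcal G^x$- and $\mathcal P^x$-components of the lifted generator, leaving $\mathcal D+\mu\mathcal I$; the stabilizer $\{\mathscr D(\epsilon)\}\subset F$ preserves both $\mathcal D$ and $\mathcal I$, whereas $\mathscr K'_*(\mathcal D+\mu\mathcal I)=-\mathcal D+\mu\mathcal I$ realises, after rescaling the generator, the reflection $\mu\sim-\mu$, yielding $\mathfrak s_{1.3}^\mu$ with $\mu\geqslant 0$. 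The parallel analyses for $\mathfrak k=0$, $\langle\mathcal P^t\rangle$ and $\langle\mathcal P^t+\mathcal K\rangle$ produce $\mathfrak s_{1.5},\mathfrak s_{1.6}$; $\mathfrak s_{1.1},\mathfrak s_{1.2}^\delta$; and $\mathfrak s_{1.4}^\nu$ respectively, the different parameter ranges reflecting whether the residual scaling acts nontrivially on the coefficient of $\mathcal I$.

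The hard part will be the bookkeeping for $\mathfrak k=\langle\mathcal P^t,\mathcal D\rangle$ and $\mathfrak k=\mathfrak f$, where the closure conditions on several pairs of generators must hold simultaneously and the two- or three-parameter residual stabilizer forces a finer case split; in particular, the action of the Borel subgroup generated by $\mathscr P^t$ and $\mathscr D$ on the lifts must be tracked carefully to reach the normal forms $\mathfrak s_{2.1}^\nu,\mathfrak s_{3.2}^\nu$ and their higher-dimensional descendants. Finally, pairwise inequivalence of the listed representatives is to be verified via the $G^{\rm ess}$-invariants $\dim\mathfrak s$, $\dim\pi_{\mathfrak f}(\mathfrak s)$, $\dim(\mathfrak s\cap\mathfrak r)$, the abstract Lie-algebra isomorphism type of $\mathfrak s$, the inner-conjugacy class of $\pi_{\mathfrak f}(\mathfrak s)$ in $\mathfrak f$, and, to separate continuous parameters, the eigenvalues of $\mathrm{ad}_X$ for a distinguished generator $X\in\mathfrak s$, which isolate the invariants $\mu$ and $\nu$ individually.
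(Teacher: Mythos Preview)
Your proposal is correct and follows essentially the same approach as the paper: a projection-based classification via the Levi decomposition $\mathfrak g^{\rm ess}=\mathfrak f\lsemioplus\mathfrak r$, running through the standard list of subalgebras of $\mathfrak f\simeq{\rm sl}(2,\mathbb R)$, lifting generators by radical parts, imposing closure, and normalizing with the pushforwards $\mathscr P^t_*,\mathscr D_*,\mathscr K_*,\mathscr G^x_*,\mathscr P^x_*,\mathscr Q^+_*,\mathscr K'_*$, with inequivalence checked via the invariants $\dim\mathfrak s$, $\dim\pi_{\mathfrak f}\mathfrak s$, $\dim(\mathfrak s\cap\mathfrak r)$ and $\dim(\mathfrak s\cap\mathfrak r')$. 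The paper carries out exactly the case split you outline (including your prototype $\mathfrak k=\langle\mathcal D\rangle$ and the use of $\mathscr K'_*$ to identify $\mu\sim-\mu$); the only cosmetic difference is that the paper organizes the ``hard'' cases by first disposing of $\check n=3$ uniformly and then restricting to $\check n<3$, rather than splitting purely by $\dim\mathfrak k$ and~$d$.
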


\begin{proof}
In the course of classifying $G^{\rm ess}$-inequivalent subalgebras of~$\mathfrak g^{\rm ess}$
we use its Levi decomposition, $\mathfrak g^{\rm ess}=\mathfrak f\lsemioplus\mathfrak r$,
and the fact that $\mathfrak f\simeq{\rm sl}(2,\mathbb R)$.
The technique for classification subalgebras of Lie algebras
whose Levi factors are isomorphic to~${\rm sl}(2,\mathbb R)$ is developed in~\cite[Section 4]{kova2022a} and~\cite{wint1990a}
on the base of~\cite{pate1975a}.
In particular, this technique involves the fact
that the subalgebras~$\mathfrak s_1$ and~$\mathfrak s_2$ of~$\mathfrak g^{\rm ess}$ are definitely~$G^{\rm ess}$-inequivalent
if their projections on the Levi factor~$\mathfrak f$,
which are subalgebras~$\pi_{\mathfrak f}\mathfrak s_1$ and~$\pi_{\mathfrak f}\mathfrak s_2$,
are $F$-inequivalent.
Here~$\pi_{\mathfrak f}$ denotes the natural projection of~$\mathfrak g^{\rm ess}$ on~$\mathfrak f$
according to the decomposition $\mathfrak g^{\rm ess}=\mathfrak f\dotplus\mathfrak r$ of~$\mathfrak g^{\rm ess}$
as a vector space into the complementary subspaces~$\mathfrak f$ and~$\mathfrak r$.
Another simplification is that an optimal list of subalgebras for ${\rm sl}(2,\mathbb R)$ is well known,
and for the realization~$\mathfrak f$ of~${\rm sl}(2,\mathbb R)$ it is constituted by the subalgebras
$\{0\}$, $\langle\mathcal P^t\rangle$,
$\langle\mathcal D\rangle$,
$\langle\mathcal P^t+\mathcal K\rangle$,
$\langle\mathcal P^t, \mathcal D\rangle$
and $\mathfrak f$ itself.
To distinguish $G^{\rm ess}$-inequivalent subalgebras, we can also use the obvious $G^{\rm ess}$-invariants
$n:=\dim\mathfrak s$,
$\hat n:=\dim\pi_{\mathfrak f}\mathfrak s$,
$\check n:=\dim\mathfrak s\cap\mathfrak r=n-\hat n$
and $\check n':=\dim\mathfrak s\cap\mathfrak r'$.
Note that $\dim\mathfrak f=3$ and $\dim\mathfrak r=3$, and hence
$0\leqslant \hat n\leqslant\min(3,n)$ and $0\leqslant \check n\leqslant\min(3,n)$.

This is why, modulo the $G^{\rm ess}$-equivalence,
we split the classification of subalgebras~$\mathfrak s$ of the algebra~$\mathfrak g^{\rm ess}$ into cases
depending on equivalence classes of~$\pi_{\mathfrak f}\mathfrak s$
as well as on the value of the $G^{\rm ess}$-invariants $n$, $\hat n$, $\check n$ and $\check n'$.
For each of the cases, we fix a subalgebra in the above list of inequivalent subalgebras of~$\mathfrak f$
as a canonical representative for~$\pi_{\mathfrak f}\mathfrak s$,
$\pi_{\mathfrak f}\mathfrak s=\langle \hat Q_1,\dots,\hat Q_{\hat n}\rangle$ with
\[
(\hat Q_1,\dots,\hat Q_{\hat n})\in\big\{(),\,(\mathcal P^t),\,(\mathcal D),\,(\mathcal P^t+\mathcal K),\,
(\mathcal P^t,\mathcal D),\,(\mathcal P^t,\mathcal D,\mathcal K)\big\}.
\]
Then we consider a basis of~$\mathfrak s$ consisting of the vector fields of the form
$Q_i=\hat Q_i+a_i\mathcal P^x+b_i\mathcal G^x+c_i\mathcal I$, $i=1,\dots,n$,
where $\hat Q_i=0$, $i>\hat n$.
We should further simplify the form of~$Q_i$ or, equivalently,
set the constant parameters~$a_i$, $b_i$ and~$c_i$ to as simpler values as possible
via linearly recombining~$Q_i$ and pushing $\mathfrak g^{\rm ess}$ forward by elements of~$G^{\rm ess}$.
For $n\geqslant2$, we should take into account
the constraints for the parameters~$a_i$, $b_i$ and~$c_i$
implied by the closedness of the subalgebra~$\mathfrak s$ with respect to the Lie bracket of vector fields,
$[Q_i,Q_j]\in\mathfrak s$, $i,j\in\{1,\dots,n\}$.

\medskip\par\noindent	
$\boldsymbol{\check n=3.}$
Thus, $\mathfrak s\supseteq\mathfrak r$,
and we can choose $(Q_{n-2},Q_{n-1},Q_n)=(\mathcal P^x,\mathcal G^x,\mathcal I)$.
Linearly combining $Q_i$, $i=1,\dots,\hat n$, with $Q_{n-2}$, $Q_{n-1}$ and~$Q_n$,
we set $a_i=b_i=c_i=0$, $i=1,\dots,\hat n$, which then means that
$\pi_{\mathfrak f}\mathfrak s=\mathfrak s\cap\mathfrak f$.
The span of~$\mathfrak r$ and any subalgebra of~$\mathfrak f$
is necessarily closed with respect to the Lie bracket.
Therefore, in this case, we obtain the proper subalgebras~$\mathfrak s_{3.7}$,
$\mathfrak s_{4.3}$, $\mathfrak s_{4.4}$, $\mathfrak s_{4.5}$ and~$\mathfrak s_{5.1}$.

\medskip\par
Below we assume {\mathversion{bold}$\check n<3$} and also use the following observation.
If $\check n=2$, then $(a_{n-1},b_{n-1})$ and $(a_n,b_n)$ are linearly dependent
since otherwise $[Q_{n-1},Q_n]=(a_nb_{n-1}-a_{n-1}b_n)\mathcal I\in\mathfrak s$
and thus $\check n=3$, which contradicts the supposition $\check n=2$.
Linearly recombining~$Q_{n-1}$ and~$Q_n$, we can set
$(a_{n-1},b_{n-1})\ne(0,0)$ and $Q_n=\mathcal I$ in this case.
In general, if $\check n'=1$, i.e., $\mathcal I\in\mathfrak s$, then we choose $Q_n=\mathcal I$ and set
the coefficients~$c_i$, $i=1,\dots,n-1$ to zero
by linearly combining the other basis elements with~$Q_n$.

\medskip\par\noindent	
$\boldsymbol{\pi_{\mathfrak f}\mathfrak s=\{0\}.}$
Hence $\hat n=0$ and $\mathfrak s\subseteq\mathfrak r$.
If $(a_1,b_1)\ne(0,0)$, then we can set $b_1=0$,
pushing $\mathfrak g^{\rm ess}$ forward by~$\mathscr Q^+(\epsilon)$
with an appropriate~$\epsilon$.
Since the new value of the parameter~$a_1$ is necessarily nonzero,
we divide~$Q_1$ by~$a_1$ to set $a_1=1$.
Then the pushforward~$\mathscr G^x(2c_1)_*$ allows us to set $c_1=0$.
Therefore, any subalgebra of this case is $G^{\rm ess}$-equivalent
to one of the subalgebras~$\mathfrak s_{1.5}$, $\mathfrak s_{1.6}$ and~$\mathfrak s_{2.8}$.

\medskip\par\noindent	
$\boldsymbol{\pi_{\mathfrak f}\mathfrak s=\langle\mathcal P^t\rangle.}$
Acting by~$\mathscr G^x(-a_1)_*$, we set $a_1=0$.

In the case $b_1\ne0$, we first set $b_1>0$ using, if necessary, $\mathscr J_*$
and then scale $b_1$ to~1 using $\mathscr D(\epsilon)_*$ with an appropriate~$\epsilon$
and scaling the entire~$Q_1$.
The next reduction is to set $c_1=0$ by $\mathscr P^x(-2c_1)_*$.
If $n>1$, then $[Q_1,Q_2]=b_2\mathcal P^x+\tfrac12a_2\mathcal I$.
Hence we derive the subalgebras~$\mathfrak s_{1.1}$, $\mathfrak s_{2.2}$ and $\mathfrak s_{3.4}$
if $\check n=0$, $\check n=1$ and $\check n=2$, respectively.

Let now $b_1=0$.
If simultaneously $c_1\ne0$, then we can set $|c_1|=1$
successively acting by $\mathscr D(\tfrac12\ln|c_1|)_*$ on~$\mathfrak g^{\rm ess}$
and dividing~$Q_1$ by~$|c_1|$.
If $n>1$, then $[Q_1,Q_2]=b_2\mathcal P^x$.
The corresponding subalgebras for
$\check n=0$, $(\check n,\check n')=(1,0)$, $(\check n,\check n')=(1,1)$ and $\check n=2$
are
$\mathfrak s_{1.2}^\delta$, $\mathfrak s_{2.3}^\delta$, $\mathfrak s_{2.4}$ and~$\mathfrak s_{3.5}$.
In particular, the chain of simplifications in the case $(\check n,\check n')=(1,0)$ is the following.
Since $b_2=0$ and thus $a_2\ne0$ in this case, we divide~$Q_2$ by~$a_2$
and push the new~$Q_2$ forward by~$\mathscr G^x(2c_2)_*$ to set $a_2=1$ and $c_2=0$, respectively.
Then we re-establish the gauges $a_1=0$ and $c_1\in\{-1,0,1\}$
by combining~$Q_1$ with~$Q_2$ and, if the new $c_1$ is nonzero, repeating
the action by $\mathscr D(\tfrac12\ln|c_1|)_*$ and the division of~$Q_1$ by~$|c_1|$.

\medskip\par\noindent	
$\boldsymbol{\pi_{\mathfrak f}\mathfrak s=\langle\mathcal D\rangle.}$
Acting by~$\mathscr P^x(a_1)_*$ and~$\mathscr G^x(-b_1)_*$, we set $a_1=b_1=0$.
For $n=1$, the cases $c_1>0$ and~$c_1<0$ are related
via~$\mathscr K'_*$ and multiplying~$Q_1$ by~$-1$,
which leads to the subalgebra~$\mathfrak s_{1.3}^\mu$.
For $n>1$, we have $[Q_1,Q_2]=-a_2\mathcal P^x+b_2\mathcal G^x\in\mathfrak s$,
and thus $a_2b_2=0$ since $\check n<3$.
Moreover, if $(a_2,b_2)\ne(0,0)$, then the coefficient~$c_2$ can be assumed to be equal to zero
since there is no summand with~$\mathcal I$ in $[Q_1,Q_2]$.
The case $b_2\ne0$ is mapped to the case $a_2\ne0$ by~$\mathscr K'_*$.
This gives the subalgebras~$\mathfrak s_{2.5}^\nu$, $\mathfrak s_{2.6}$ and~$\mathfrak s_{3.6}$.

\medskip\par\noindent	
$\boldsymbol{\pi_{\mathfrak f}\mathfrak s=\langle\mathcal P^t+\mathcal K\rangle.}$
First we set $a_1=b_1=0$, acting by~$\mathscr P^x(b_1)_*$ and~$\mathscr G^x(-a_1)_*$.
For $n=1$, there is no possible further simplification of~$\mathfrak s$.
For $n>1$, we obtain $[Q_1,Q_2]=b_2\mathcal P^x-a_2\mathcal G^x$,
which implies $a_2=b_2=0$ in view of $\check n<3$.
As a result, we derive the subalgebras~$\mathfrak s_{1.4}^\nu$ and~$\mathfrak s_{2.7}$.

\medskip\par\noindent	
$\boldsymbol{\pi_{\mathfrak f}\mathfrak s=\langle\mathcal P^t, \mathcal D\rangle.}$
We reduce~$Q_2$ to the form $Q_2=\mathcal D+\nu\mathcal I$,
following the consideration in the case $\pi_{\mathfrak f}\mathfrak s=\langle\mathcal D\rangle$.
Consider the commutators
$[Q_1,Q_2]=2\mathcal P^t+a_1\mathcal P^x-b_1\mathcal G^x$
and, if $n>2$,
$[Q_1,Q_3]=b_3\mathcal P^x+\frac12(a_3b_1-a_1b_3)\mathcal I$ and
$[Q_2,Q_3]=-a_3\mathcal P^x+b_3\mathcal G^x$.
For $n=2$, it follows from the condition $[Q_1,Q_2]\in\mathfrak s$ that $a_1=b_1=c_1=0$,
which gives the algebra~$\mathfrak s_{2.1}^\nu$.
Let $n>2$.
The conditions $[Q_1,Q_3]\in\mathfrak s$ and $\check n<3$ jointly implies that $b_3=0$.
Then, the condition $[Q_1,Q_2]\in\mathfrak s$ allows us to conclude that $b_1=0$,
that the coefficient~$c_1$ can be assumed to be equal to zero
since there is no summand with~$\mathcal I$ in $[Q_1,Q_2]$
and that $a_1=0$ if $a_3=0$.
For $a_3\ne0$, the condition $[Q_2,Q_3]\in\mathfrak s$ similarly implies
that the coefficient~$c_3$ can be assumed to be equal to zero,
and we set $a_1=0$ and $a_3=1$, linearly combining~$Q_1$ and~$Q_3$.
Therefore, we also obtain the subalgebras~$\mathfrak s_{3.2}^\nu$, $\mathfrak s_{3.3}$ and~$\mathfrak s_{4.2}$.

\medskip\par\noindent	
$\boldsymbol{\pi_{\mathfrak f}\mathfrak s=\mathfrak f.}$
Hence $\hat n=3$ and the algebra~$\mathfrak s$ is nonsolvable, i.e., it has a nonzero Levi factor,
which is necessarily a Levi factor of~$\mathfrak g^{\rm ess}$ as well.
According to the Levi--Malcev theorem, we can assume modulo the $R$-equivalence
that $(Q_1,Q_2,Q_3)=(\mathcal P^t,\mathcal D,\mathcal K)$.
If $n>3$ and $(a_4,b_4)\ne(0,0)$, then
$[\mathcal P^t+\mathcal K,Q_4]=b_4\mathcal P^x-a_4\mathcal G^x\in\mathfrak s$ and
$[Q_4,b_4\mathcal P^x-a_4\mathcal G^x]=(a_4^{\,2}+b_4^{\,2})\mathcal I\in\mathfrak s$,
which means that $n=6$ and $\mathfrak s=\mathfrak g^{\rm ess}$.
This is why $G^{\rm ess}$-inequivalent proper subalgebras of $\mathfrak g^{\rm ess}$ with $\hat n=3$
are exhausted by~$\mathfrak s_{3.1}$ and~$\mathfrak s_{4.1}$.

\medskip\par	
The inequivalence, to each other, of the subalgebras~$\mathfrak s_{1.1}$ and~$\mathfrak s_{1.2}^\delta$
and the inequivalence of subalgebras within all the parameterized families
that are listed in the lemma's statement can be checked using the direct arguments.
For other pairs of subalgebras, it suffices to compare the corresponding values of
$n:=\dim\mathfrak s$, $\hat n:=\pi_{\mathfrak f}\mathfrak s$ and~$\check n':=\dim\mathfrak s\cap\mathfrak r'$.
\end{proof}

Let us show that to classify Lie reductions of the equation~\eqref{eq:LinHeat} to ordinary differential equations,
it in fact suffices to classify one-dimensional subalgebras of~$\mathfrak g^{\rm ess}$ rather than the entire algebra~$\mathfrak g$.
For this purpose, we construct a complete list of $G$-inequivalent one-dimensional subalgebras of $\mathfrak g$.
Recall that the pseudogroup~$G$ splits over its normal pseudosubgroup~$G^{\rm lin}$, $G=G^{\rm ess}\ltimes G^{\rm lin}$
(see Section~\ref{sec:LinHeatPointSymGroup}).
In other words, any element~$\Phi$ of~$G$ can be uniquely decomposed as $\Phi=\mathscr F\circ\mathscr Z(f)$
for some $\mathscr F\in G^{\rm ess}$ and some $\mathscr Z(f)\in G^{\rm lin}$.
Accordingly, the algebra $\mathfrak g$ splits over its ideal~$\mathfrak g^{\rm lin}$,
$\mathfrak g=\mathfrak g^{\rm ess}\lsemioplus\mathfrak g^{\rm lin}$.
Thus, the complete description of the adjoint action of~$G$ on~$\mathfrak g$
is given by the already described action of $G^{\rm ess}$ on $\mathfrak g^{\rm ess}$
that is supplemented with the descriptions of the adjoint actions
of~$G^{\rm lin}$ on~$\mathfrak g^{\rm ess}$ and of~$G^{\rm ess}$ on~$\mathfrak g^{\rm lin}$,
which are $\mathscr Z(f)_*Q=Q+Q[f]\p_u$ and $\mathscr F_*\mathcal Z(f)=\mathcal Z(\mathscr F_*f)$,
whereas the adjoint action of~$G^{\rm lin}$ on~$\mathfrak g^{\rm lin}$ is trivial.
Here $Q$ is an arbitrary vector field from~$\mathfrak g^{\rm ess}$,
$Q[f]$ for a function~$f$ of~$(t,x)$ denotes the evaluation of the characteristic~$Q[u]$ of~$Q$ at $u=f$,
and $\mathscr F$ is an arbitrary transformation from~$G^{\rm ess}$.

In the course of the classification, the following lemma will be of significant use.

\begin{lemma}\label{lem:ExistSolut}
Let $Q=\xi^1\p_{x_1}+\xi^2\p_{x_2}+\eta^1u\p_u$ be a Lie-symmetry vector field of
a linear partial differential equation~$\mathcal L$: $Lu=0$ in two independent variables $(x_1,x_2)$
with a linear differential operator~$L$,
where $\xi^1$, $\xi^2$ and~$\eta$ are smooth functions of $(x_1,x_2)$ with $(\xi^1,\xi^2)\ne(0,0)$.
Suppose in addition that the equation~$\mathcal L$
is not a differential consequence of the equation $Q[u]:=\eta^1u-\xi^1u_{x_1}-\xi^2u_{x_2}=0$.
Then for an arbitrary solution~$f$ of the equation~$\mathcal L$,
there exists a solution~$h$ of the same equation that in addition satisfies the constraint $Q[h]=f$.
\end{lemma}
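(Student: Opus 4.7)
The plan is to straighten the vector field $Q$ and then reduce the problem to an integration together with solving an ordinary differential equation in the transverse coordinate. Since $(\xi^1,\xi^2)\ne(0,0)$, the method of characteristics yields a nonvanishing function $\rho(x_1,x_2)$ satisfying $\xi^1\rho_{x_1}+\xi^2\rho_{x_2}=\eta^1\rho$, and the flow-box theorem provides a local change of independent variables $(x_1,x_2)\to(s,y)$ in which $\xi^1\partial_{x_1}+\xi^2\partial_{x_2}$ becomes $\partial_s$. Combining this with the rescaling $u=\rho v$ of the dependent variable, one checks that $Q$ takes the canonical form $\partial_s$, its characteristic becomes $Q[u]=-\rho v_s$, and the equation~$\mathcal L$ is mapped to a new linear equation $\tilde L\tilde v=0$. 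Because $Q$ is a Lie symmetry of~$\mathcal L$ and now acts as pure translation in~$s$, the coefficients of~$\tilde L$ do not depend on~$s$, so one can expand $\tilde L=\sum_{j\geqslant0}\tilde L_j(y,\partial_y)\partial_s^j$.

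In these coordinates the problem becomes the following: given a solution~$\tilde f$ of $\tilde L\tilde v=0$, find $\tilde h$ that solves the same equation and satisfies $\tilde h_s=-\tilde f$. I would seek $\tilde h$ in the form
\[
\tilde h(s,y)=-\int_{s_0}^s\tilde f(s',y)\,ds'+g(y),
\]
which automatically ensures $\tilde h_s=-\tilde f$. A direct computation that uses $\tilde L\tilde f=0$ and the commutation of each $\tilde L_j$ with integration in~$s$ reduces the equation $\tilde L\tilde h=0$ to the relation
\[
\tilde L_0\,g(y)=\sum_{j\geqslant1}\tilde L_j\bigl((\partial_s^{j-1}\tilde f)|_{s=s_0}\bigr),
\]
which is an inhomogeneous linear ODE in~$y$ with a fixed right-hand side.

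The hypothesis of the lemma enters exactly here: the equation~$\mathcal L$ is a differential consequence of $Q[u]=0$ precisely when the straightened operator~$\tilde L$ factors as $\tilde L=M\partial_s$, equivalently, when every summand of~$\tilde L$ contains at least one $\partial_s$, i.e.\ $\tilde L_0\equiv 0$. Under our assumption $\tilde L_0$ is therefore a nonzero linear ordinary differential operator in~$y$, and the ODE for~$g$ is solvable by standard existence results. Pulling the resulting $\tilde h$ back through the rescaling and the coordinate change produces $h=\rho\tilde h$, which solves~$\mathcal L$ and satisfies $Q[h]=-\rho\tilde h_s=\rho\tilde f=f$. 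The main obstacle is the verification of the displayed reduction formula for $\tilde L\tilde h$: it is elementary but requires carefully splitting $\tilde L$ according to the order of $\partial_s$ and using $\tilde L\tilde f=0$ to rewrite the $s$-integral of $\tilde L_0\tilde f$ as a boundary term at $s=s_0$.
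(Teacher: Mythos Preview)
Your proof is correct and follows essentially the same route as the paper: straighten $Q$ to a coordinate translation, use the resulting $s$-independence of the transformed operator, integrate in~$s$, and reduce the remaining condition to an inhomogeneous linear ordinary (or algebraic) equation in the transverse variable. The paper's decomposition $L=\hat L\circ\partial_{x_1}+R$ matches your expansion $\tilde L=\sum_{j\geqslant0}\tilde L_j\partial_s^j$ with $R=\tilde L_0$ and $\hat L=\sum_{j\geqslant1}\tilde L_j\partial_s^{j-1}$, and the paper's reduced equation $R\varphi=-(\hat Lf)(x_{1,0},\cdot)$ is precisely your equation for~$g$.
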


\begin{proof}
We straighten out the vector field~$Q$ to $\p_{\tilde x_1}$
by a point transformation~$\Phi$ of the form
$\tilde x_1=X^1(x_1,x_2)$, $\tilde x_2=X^2(x_1,x_2)$, $\tilde u=U^1(x_1,x_2)u$
with $(X^1_{x_1}X^2_{x_2}-X^1_{x_2}X^2_{x_1})U^1\ne0$.
The transformation~$\Phi$ preserves the linearity and homogeneity of~$\mathcal L$.
This is why we can assume without loss of generality from the very beginning
that $Q=\p_{x_1}$.

The Lie invariance of~$\mathcal L$ with respect to this~$Q$
means that, up to a nonzero multiplier of~$L$ that may be a function of~$(x_1,x_2)$,
the coefficients of the operator~$L$ do not depend on~$x_1$.
We represent this operator in the form $L=\hat L\circ\p_{x_1}+R$,
where $\hat L$ and~$R$ are linear differential operators with coefficients depending at most on~$x_2$,
and, moreover, the operator~$R$ contains at most differentiations with respect to~$x_2$
and is nonzero since otherwise the equation~$\mathcal L$ is a differential consequence of the equation $Q[u]=0$.
The constraint $Q[h]=f$ takes the form $h_{x_1}=f$,
and its general solution is $h=\int_{x_{1,0}}^{x_1}f(s,x_2)\,{\rm d}s+\varphi(x_2)$,
where $x_{1,0}$ is an appropriate fixed value of~$x_1$,
and $\varphi$ is an arbitrary sufficiently smooth function of~$x_2$.
Substituting the expression for~$h$ into the equation~$\mathcal L$ and taking into account
that $\hat L$ and~$\p_{x_1}$ commute, $\hat L\circ\p_{x_1}=\p_{x_1}\circ\hat L$, we have
\begin{gather*}
\begin{split}
Lh(x_1,x_2)
&=\hat Lf(x_1,x_2)+\int_{x_{1,0}}^{x_1}Rf(s,x_2)\,{\rm d}s+R\varphi(x_2)\\
&=\int_{x_{1,0}}^{x_1}\p_s\circ\hat L\big|_{x_1\rightsquigarrow s}^{}f(s,x_2)\,{\rm d}s+(\hat Lf)(x_{1,0},x_2)
+\int_{x_{1,0}}^{x_1}Rf(s,x_2)\,{\rm d}s+R\varphi(x_2)\\
&=\int_{x_{1,0}}^{x_1}L\big|_{x_1\rightsquigarrow s}^{}f(s,x_2)\,{\rm d}s+(\hat Lf)(x_{1,0},x_2)+R\varphi(x_2)
 =(\hat Lf)(x_{1,0},x_2)+R\varphi(x_2).
\end{split}
\end{gather*}
As a result, the question of the existence of~$h$ reduces to solving the linear
(either ordinary differential or algebraic in the sense that is not differential) equation
$R\varphi=-(\hat Lf)(x_{1,0},\cdot)$ with respect to~$\varphi$,
whose solution definitely exists.
\end{proof}

\begin{lemma}\label{lem:LinHeat1DSubalgsOfEntireG}
A complete list of $G$-inequivalent one-dimensional subalgebras of $\mathfrak g$ consists of
the one-dimensional subalgebras of~$\mathfrak g^{\rm ess}$ listed in Lemma~\ref{lem:LinHeat1DSubalgs}
and the subalgebras of the form $\langle\mathcal Z(f)\rangle$,
where the function~$f$ belongs to a fixed complete set of $G^{\rm ess}$-inequivalent nonzero solutions
of the equation~\eqref{eq:LinHeat}.
\end{lemma}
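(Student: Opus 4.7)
The splitting $\mathfrak g=\mathfrak g^{\rm ess}\lsemioplus\mathfrak g^{\rm lin}$ lets me write any one-dimensional subalgebra of~$\mathfrak g$ as $\langle Q+\mathcal Z(f)\rangle$ with $Q\in\mathfrak g^{\rm ess}$ and $f$ a solution of~\eqref{eq:LinHeat}. The plan is to use the adjoint-action formulas stated just before the lemma to absorb the tail $\mathcal Z(f)$ into a conjugation by a suitable element of $G^{\rm lin}$ whenever possible, and thereby reduce to Lemma~\ref{lem:LinHeat1DSubalgs}. The key identity is
\begin{gather*}
\mathscr Z(h)_{*}\bigl(Q+\mathcal Z(f)\bigr)=Q+\mathcal Z\bigl(f+Q[h]\bigr),
\end{gather*}
obtained from $\mathscr Z(h)_*Q=Q+Q[h]\p_u$ together with the triviality of the $G^{\rm lin}$-action on~$\mathfrak g^{\rm lin}$. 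Thus killing the tail amounts to exhibiting a solution~$h$ of~\eqref{eq:LinHeat} with $Q[h]=-f$.

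I would split the remaining argument according to the type of~$Q$. If $Q=0$, the subalgebra is $\langle\mathcal Z(f)\rangle$ with $f\ne0$; because the rescaling freedom $f\mapsto cf$ intrinsic to the choice of a spanning vector is realized inside $G^{\rm ess}$ by $\mathscr I(\epsilon)$ together with $\mathscr I'$, and $\mathscr F_*\mathcal Z(f)=\mathcal Z(\mathscr F_*f)$ for $\mathscr F\in G^{\rm ess}$, $G$-equivalence of such subalgebras coincides with $G^{\rm ess}$-equivalence of the corresponding nonzero solutions, giving the second family in the statement. If $Q=c\mathcal I$ with $c\ne0$, then $Q[h]=ch$, so taking $h=-f/c$ (a genuine solution of~\eqref{eq:LinHeat}) kills the tail and reduces $\langle Q+\mathcal Z(f)\rangle$ to $\langle\mathcal I\rangle=\mathfrak s_{1.6}$. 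If $Q$ has nonzero $(\p_t,\p_x)$-part, I would apply Lemma~\ref{lem:ExistSolut} with $(x_1,x_2)=(t,x)$ to produce the required~$h$; the subalgebra then reduces to $\langle Q\rangle$, which is $G^{\rm ess}$-equivalent to one of the one-dimensional subalgebras in Lemma~\ref{lem:LinHeat1DSubalgs}.

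The one delicate point, and the main (though minor) obstacle, is the verification of the third hypothesis of Lemma~\ref{lem:ExistSolut}: that~\eqref{eq:LinHeat} is not a differential consequence of the first-order equation $Q[u]=0$ for every relevant $Q\in\mathfrak g^{\rm ess}$. I expect this to follow at once from the observation that the characteristic system of $Q[u]=0$ determines~$u$ along a one-parameter family of curves in the $(t,x)$-plane but leaves $u_{xx}$ transversally unconstrained, so the second-order equation~\eqref{eq:LinHeat} imposes an independent condition; a case-by-case check across the basis of $\mathfrak g^{\rm ess}$ would serve as a backup. Finally, pairwise inequivalence of the listed representatives is immediate: conjugation by elements of $G^{\rm lin}$ preserves the projection onto $\mathfrak g^{\rm ess}$, so $G$-equivalence between two purely essential one-dimensional subalgebras reduces to $G^{\rm ess}$-equivalence, which is handled by Lemma~\ref{lem:LinHeat1DSubalgs}; and the vanishing of the essential projection is itself a $G$-invariant separating the $\langle\mathcal Z(f)\rangle$-family from the rest.
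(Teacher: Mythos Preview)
Your proposal is correct and follows essentially the same route as the paper: decompose a generator as $\hat Q+\mathcal Z(f)$, use $\mathscr Z(h)_*$ to kill the $\mathfrak g^{\rm lin}$-tail via Lemma~\ref{lem:ExistSolut} when $\hat Q\notin\langle\mathcal I\rangle$, handle $\hat Q\in\langle\mathcal I\rangle\setminus\{0\}$ by taking $h=-f$ after normalizing to $\hat Q=\mathcal I$, and leave the case $\hat Q=0$ as the $\langle\mathcal Z(f)\rangle$-family modulo $G^{\rm ess}$. The only notable difference is in how you dispatch the ``not a differential consequence'' hypothesis of Lemma~\ref{lem:ExistSolut}: the paper disposes of it in one stroke by observing that the operator $\p_t-\p_x^{\,2}$ cannot be factored as a product of two first-order linear differential operators, which is cleaner than your transversality heuristic or a case-by-case check.
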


\begin{proof}
The classification of one-dimensional subalgebras of~$\mathfrak g$ is based on the corresponding classification
for~$\mathfrak g^{\rm ess}$.
This is due to the fact that subalgebras $\mathfrak s_1$ and $\mathfrak s_2$ of $\mathfrak g$ are definitely $G$-inequivalent if
their natural projections on the subalgebra~$\mathfrak g^{\rm ess}$
under the vector-space decomposition $\mathfrak g=\mathfrak g^{\rm ess}\dotplus\mathfrak g^{\rm lin}$
are $G^{\rm ess}$-inequivalent.

Let a (nonzero) vector field~$Q$ be a basis element
of a one-dimensional subalgebra~$\mathfrak s$ of~$\mathfrak g$.
In view of the above decomposition, we can represent~$Q$ as $Q=\hat Q+\mathcal Z(f)$
for some $\hat Q\in\mathfrak g^{\rm ess}$ and some solution~$f$ of the equation~\eqref{eq:LinHeat}.

If $\hat Q\notin\langle\mathcal I\rangle$, then we push the vector field $Q$ forward by the transformation~$\mathscr Z(h)$,
where $h=h(t,x)$ is a common solution of the equations $h_t+h_{xx}=0$ and $\hat Q[h]+f=0$.
Since the operator $\p_t-\p_x^{\,2}$ cannot be factorized into two first-order differential operators,
the existence of such a solution follows from Lemma~\ref{lem:ExistSolut}.
For nonzero $\hat Q\in\langle\mathcal I\rangle$, we can set $\hat Q=\mathcal I$
by rescaling the entire~$Q$ and then applying~$\mathscr Z(-f)_*$ to $Q$.
Therefore, $G$-inequivalent one-dimensional subalgebras of~$\mathfrak g$,
whose projections on~$\mathfrak g^{\rm ess}$ are nonzero,
are exhausted by the subalgebras $\mathfrak s_{1.1}$--$\mathfrak s_{1.6}$ from Lemma~\ref{lem:LinHeat1DSubalgs}.

If $\hat Q=0$, then $Q=\mathcal Z(f)$ with nonzero~$f$.
Recall that the actions of the groups~$G$ and~$G^{\rm ess}$ on the algebra~$\mathfrak g^{\rm lin}$ coincide
and are equivalent to the corresponding actions on the solution set of the equation~\eqref{eq:LinHeat}.
\end{proof}

It is obvious that the subalgebras of the form $\langle\mathcal Z(f)\rangle$ are not appropriate
to be used within the framework of Lie reductions.
Therefore, the $G$-inequivalent reductions of the equation~\eqref{eq:LinHeat}
to ordinary differential equations are exhausted by those
that are associated with the one-dimensional subalgebras $\mathfrak s_{1.1}$--$\mathfrak s_{1.5}$
of~$\mathfrak g^{\rm ess}$ listed in Lemma~\ref{lem:LinHeat1DSubalgs}.

We do not consider the classification of Lie reductions of the equation~\eqref{eq:LinHeat} in the present paper
since an enhanced exhaustive list of inequivalent Lie invariant solutions of this equation
was presented in~\cite[Section A]{vane2021}, based on Examples 3.3 and 3.17 in~\cite{olve1993A}.
Up to combining the $G^{\rm ess}$-equivalence and linear superposition with each other,
these solutions exhaust the set of known exact solutions of the equation~\eqref{eq:LinHeat}
that have a closed form in terms of elementary and special functions.

\section{Pseudo-discrete symmetries}\label{sec:LinHeatPseudo-discreteSyms}

We would like to emphasize that the false ``discrete transformations'' $\mathscr J$ and $\mathscr K'$
not only just belong to the identity component~$G^{\rm ess}_{\rm id}$ of~$G^{\rm ess}$
but are, moreover, elements of the one-parameter subgroup that is generated by the vector field~$\mathcal Q^+$,
$\mathscr J=\mathscr Q^+(\pi)$ and $\mathscr K'=\mathscr Q^+(-\frac12\pi)$.
At the same time, many transformations from~$G^{\rm ess}_{\rm id}$ belong to no one-parameter subgroups of~$G^{\rm ess}$
or, in other words, $G^{\rm ess}\setminus\exp(\mathfrak g^{\rm ess})$ is a considerable part of~$G^{\rm ess}$.
To exhaustively describe $\exp(\mathfrak g^{\rm ess})$ and $G^{\rm ess}\setminus\exp(\mathfrak g^{\rm ess})$,
we use the following assertions.

\begin{lemma}\label{lem:1parSubgroupsOfHeads}
Let a Lie group~$G$ be a semidirect product of its Lie subgroup~$H$
acting on its normal Lie subgroup~$N$, $H<G$, $N\lhd G$ and $G=H\ltimes N$.
If an element~$g_0$ of~$G$ belongs to a one-parameter subgroup of~$G$,
then the multiplier $h_0\in H$ in the decomposition $g_0=h_0n_0$ with $n_0\in N$
belongs to a one-parameter subgroup of $H$.
\end{lemma}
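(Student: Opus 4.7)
The plan is to exploit the canonical homomorphism from $G$ onto its head factor~$H$ and the functoriality of one-parameter subgroups under Lie group homomorphisms. The key observation is that the semidirect product structure $G=H\ltimes N$ provides, via the isomorphism $G/N\simeq H$, a smooth group homomorphism $\pi\colon G\to H$ given on elements by $\pi(hn)=h$ for $h\in H$ and $n\in N$. In particular, $\pi(g_0)=h_0$.

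Suppose $g_0$ lies in a one-parameter subgroup of~$G$. By definition, there is a smooth homomorphism $\varphi\colon\mathbb R\to G$ and some $t_0\in\mathbb R$ with $\varphi(t_0)=g_0$. Set $\psi:=\pi\circ\varphi$. As a composition of smooth group homomorphisms, $\psi\colon\mathbb R\to H$ is itself a smooth group homomorphism, i.e., a one-parameter subgroup of~$H$. Moreover, $\psi(t_0)=\pi(\varphi(t_0))=\pi(g_0)=h_0$, so $h_0$ belongs to the one-parameter subgroup $\{\psi(t)\mid t\in\mathbb R\}$ of~$H$, which is the claim.

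Equivalently, one can argue on the Lie algebra level: writing $g_0=\exp_G(X)$ for some $X\in\mathfrak g$ and decomposing $\mathfrak g=\mathfrak h\dotplus\mathfrak n$ as a vector space with projection $\mathrm d\pi\colon\mathfrak g\to\mathfrak h$, the naturality relation $\pi\circ\exp_G=\exp_H\circ\,\mathrm d\pi$ gives $h_0=\pi(g_0)=\exp_H(\mathrm d\pi(X))$, placing $h_0$ in the one-parameter subgroup of~$H$ generated by $\mathrm d\pi(X)\in\mathfrak h$. There is essentially no obstacle: the only thing to verify carefully is that the projection $\pi$ associated with a semidirect product decomposition is indeed a Lie group homomorphism, which is immediate from the multiplication rule $(h_1n_1)(h_2n_2)=(h_1h_2)((h_2^{-1}n_1h_2)n_2)$ in $H\ltimes N$.
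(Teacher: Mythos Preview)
Your proof is correct and follows essentially the same approach as the paper: both arguments compose the given one-parameter subgroup $\mathbb R\to G$ with the canonical projection $G\to H$ arising from the semidirect product structure, using that this projection is a Lie group homomorphism. The paper phrases the projection as $\pi_1\circ j^{-1}$ via the external semidirect product $H\rightthreetimes_\psi N$, while you phrase it via the quotient $G/N\simeq H$ (and also verify the homomorphism property directly from the multiplication rule), but the content is identical; your additional exponential-map formulation is a convenient restatement of the same idea.
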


\begin{proof}
Let a homomorphism $\chi\colon(\mathbb R,+)\to G$ define a one-parameter subgroup of~$G$
that contains~$g_0$, i.e., $\chi(\epsilon)=g_0$ for some $\epsilon\in\mathbb R$,
and let $\psi$ be the homomorphism of~$H$ into the automorphism group~${\rm Aut}(N)$ of~$N$
defined by $H\ni h\mapsto{\rm conj}_h|_N^{}\in{\rm Aut}(N)$.
Denote by $H\rightthreetimes_\psi N$ the external semidirect product of~$H$ by~$N$ relative to~$\psi$.
By the definition of internal semidirect product,
the map $j=(h,n)\mapsto hn$ of the product $H\rightthreetimes_\psi N$ onto~$G$ is a Lie group isomorphism,
and the natural projection $\pi_1$ of $H\rightthreetimes_\psi N$ onto its first component~$H$
is a Lie group homomorphism.
Hence the function $\hat\chi:=\pi_1\circ j^{-1}\circ\chi$ defines a one-parameter subgroup of~$H$,
and $\hat\chi(\epsilon)=\pi_1\circ j^{-1}\circ\chi(\epsilon)=\pi_1\circ j^{-1}g_0
=\pi_1(h_0,n_0)=h_0$.
\end{proof}

\begin{lemma}\label{lem:1parSubgroupsOfsl2R}
Up to the subgroup conjugation in $F\simeq{\rm SL}(2,\mathbb R)$
and rescaling the group parameter $\epsilon\in\mathbb R$,
one-parameter subgroups of $F\simeq{\rm SL}(2,\mathbb R)$ are exhausted by three groups whose elements respectively are
\[
\mathscr P^t(\epsilon)\sim\begin{pmatrix}1&\epsilon\\0&1\end{pmatrix},\quad
\mathscr D(\epsilon)\sim\begin{pmatrix}{\rm e}^\epsilon&0\\0&{\rm e}^{-\epsilon}\end{pmatrix},\quad
\mathscr Q^+(\epsilon)\sim\begin{pmatrix}\cos\epsilon&\sin\epsilon\\-\sin\epsilon&\cos\epsilon\end{pmatrix}.
\]
\end{lemma}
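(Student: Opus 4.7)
The plan is to translate the classification of one-parameter subgroups of $F$ into a classification of orbits in the Lie algebra $\mathfrak f\simeq\mathfrak{sl}(2,\mathbb R)$. Since $F$ is connected, every one-parameter subgroup of~$F$ has the form $\epsilon\mapsto\exp(\epsilon X)$ for some nonzero $X\in\mathfrak{sl}(2,\mathbb R)$. Subgroup conjugation by $g\in F$ then replaces the generator by $gXg^{-1}$, and rescaling $\epsilon\mapsto c\epsilon$ with $c\in\mathbb R\setminus\{0\}$ replaces it by $cX$. Hence two one-parameter subgroups are equivalent in the sense of the lemma if and only if their generators lie in the same orbit of the combined action $X\mapsto c\,gXg^{-1}$ of ${\rm SL}(2,\mathbb R)\times\mathbb R^\times$ on $\mathfrak{sl}(2,\mathbb R)\setminus\{0\}$.

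Since $X\in\mathfrak{sl}(2,\mathbb R)$ is a real traceless $2\times2$ matrix, its characteristic polynomial equals $\lambda^2+\det X$. Conjugation preserves $\det X$, while scaling satisfies $\det(cX)=c^2\det X$, so $\sgn(\det X)$ is an orbit invariant, and the scaling freedom allows me to normalize $\det X\in\{-1,0,+1\}$. In the three resulting cases $X$ has, respectively, two distinct real eigenvalues $\pm1$ (hyperbolic case), is nonzero nilpotent (parabolic case), or has purely imaginary eigenvalues $\pm i$ (elliptic case); a standard real normal form argument then brings $X$ to $\mathrm{diag}(1,-1)$, $\bigl(\begin{smallmatrix}0&1\\0&0\end{smallmatrix}\bigr)$, and $\bigl(\begin{smallmatrix}0&1\\-1&0\end{smallmatrix}\bigr)$, respectively. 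Exponentiating each of these canonical generators via the defining power series (which terminates for the nilpotent form and reduces to $\cos\epsilon\,I+\sin\epsilon\,X$ for the rotation generator) then reproduces the three matrix families displayed in the statement.

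The main technicality to watch is that the transition matrix bringing $X$ to its normal form is a priori only in ${\rm GL}(2,\mathbb R)$; it needs to be arranged to lie in ${\rm SL}(2,\mathbb R)$. This is done by rescaling the chosen real eigenbasis or Jordan basis in each case. In the hyperbolic case, the two diagonal forms $\mathrm{diag}(\pm1,\mp1)$ are already ${\rm SL}(2,\mathbb R)$-conjugate via $\bigl(\begin{smallmatrix}0&1\\-1&0\end{smallmatrix}\bigr)$, but in the parabolic and elliptic cases the two possible signs of the canonical form genuinely represent distinct ${\rm SL}(2,\mathbb R)$-orbits (as is easily verified by a direct calculation producing the impossible real constraint $a^2+b^2=-1$), and it is precisely the scaling $c=-1$ from the second factor of the combined action that merges each such pair into a single orbit. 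This sign-merging step is the one point where the presence of the rescaling in the lemma's equivalence relation is essential.
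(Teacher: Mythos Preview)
Your proof is correct and follows essentially the same approach as the paper's: both reduce the classification of one-parameter subgroups to the classification of one-dimensional subalgebras of $\mathfrak f\simeq\mathfrak{sl}(2,\mathbb R)$ under the adjoint action, and then exponentiate the canonical generators. The paper simply invokes this subalgebra classification as ``well known,'' whereas you supply the details via the invariant $\sgn(\det X)$, the real normal forms, and the sign-merging role of the rescaling; this makes your argument self-contained but not methodologically different.
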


\begin{proof}
One-parameter subgroups of a Lie group~$G$ with Lie algebra~$\mathfrak g$ are equivalent
under the subgroup conjugation in~$G$
if and only if their generators span $G$-equivalent subalgebras of~$\mathfrak g$.
It is well known that a complete list of inequivalent one-dimensional subalgebras of $F\simeq{\rm SL}(2,\mathbb R)$
is exhausted by the subalgebras that are respectively spanned by the elements
\[
\mathcal P^t\sim\begin{pmatrix}0&1\\0&0\end{pmatrix},\quad
\mathcal D\sim\begin{pmatrix}1&0\\0&-1\end{pmatrix},\quad
\mathcal Q^+\sim\begin{pmatrix}0&1\\-1&0\end{pmatrix}
\]
of $\mathfrak f\simeq{\rm sl}(2,\mathbb R)$.
These elements respectively generate the one-parameter subgroups of $F\simeq{\rm SL}(2,\mathbb R)$
for lemma's statement.
\end{proof}

Hereafter we denote $E:=\mathop{\rm diag}(1,1)$.

\begin{corollary}\label{cor:1parSubgroupsOfsl2R}
A matrix $A\in{\rm SL}(2,\mathbb R)$ belongs to a one-parameter subgroup of ${\rm SL}(2,\mathbb R)$
if and only if either $\mathop{\rm tr}A>-2$ or $A=-E$.
Equivalently, a point transformation $\Phi\in F$ belongs to a one-parameter subgroup of~$F$
if and only if either $\alpha+\delta>-2$ or $\alpha=\delta=-1$ and $\beta=\gamma=0$.
\end{corollary}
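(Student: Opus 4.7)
The plan is to apply Lemma~\ref{lem:1parSubgroupsOfsl2R}, which reduces the question to whether $A$ is ${\rm SL}(2,\mathbb R)$-conjugate to an element of one of the three listed standard one-parameter subgroups---the unipotent family $\mathscr P^t(\epsilon)$, the hyperbolic family $\mathscr D(\epsilon)$, or the elliptic family $\mathscr Q^+(\epsilon)$. The conjugate of a one-parameter subgroup is again one, so membership in some one-parameter subgroup is conjugation-invariant, as are $\mathop{\rm tr}A$ and the condition $A=-E$. The corollary will therefore follow by comparing the trace along each standard family with the ${\rm SL}(2,\mathbb R)$-conjugacy classification of $A$, which is essentially determined by $\mathop{\rm tr}A$ together with, in the degenerate cases $\mathop{\rm tr}A=\pm 2$, the Jordan type of $A$.

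For the necessity direction I would simply compute the traces realized along the three families: $2$, $2\cosh\epsilon\in[2,\infty)$, and $2\cos\epsilon\in[-2,2]$, respectively. Hence every element of a one-parameter subgroup has trace at least $-2$, and trace exactly $-2$ is attained only by $\mathscr Q^+(\pi)=-E$. Since $-E$ is central, its ${\rm SL}(2,\mathbb R)$-conjugacy class is the singleton $\{-E\}$, which yields the necessary condition.

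For sufficiency I would split by $\mathop{\rm tr}A$. If $\mathop{\rm tr}A>2$, then $A$ has distinct positive real eigenvalues $\lambda,1/\lambda$ and is ${\rm SL}(2,\mathbb R)$-conjugate to $\mathop{\rm diag}(\lambda,1/\lambda)=\mathscr D(\ln\lambda)$. If $\mathop{\rm tr}A=2$, then either $A=E$ (which is contained in every one-parameter subgroup) or $A$ is a nontrivial unipotent, ${\rm SL}(2,\mathbb R)$-conjugate to $\mathscr P^t(\pm 1)$. If $-2<\mathop{\rm tr}A<2$, then the eigenvalues form a conjugate pair $e^{\pm i\theta}$ with $\theta\in(0,\pi)$, and $A$ is ${\rm SL}(2,\mathbb R)$-conjugate to $\mathscr Q^+(\theta)$ or $\mathscr Q^+(-\theta)$. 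Finally, $A=-E$ equals $\mathscr Q^+(\pi)$. The equivalent formulation for $\Phi\in F$ is immediate from the isomorphism $\varrho_1$, since $\mathop{\rm tr}A=\alpha+\delta$ and $A=-E$ translates to $\alpha=\delta=-1$, $\beta=\gamma=0$.

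The step I expect to be the main nuisance is ensuring that every conjugation in the sufficiency argument can be carried out within ${\rm SL}(2,\mathbb R)$ rather than only in ${\rm GL}(2,\mathbb R)$, especially at the borderline traces $\pm 2$ where a single Jordan-type class may split into two ${\rm SL}(2,\mathbb R)$-orbits. This is resolved by noting that each standard family is closed under $\epsilon\mapsto -\epsilon$, so both orbits in the relevant class are met, and in the hyperbolic case by an explicit conjugation by $\bigl(\begin{smallmatrix}0&1\\-1&0\end{smallmatrix}\bigr)\in{\rm SL}(2,\mathbb R)$ swapping the two diagonal entries.
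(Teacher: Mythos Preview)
Your proof is correct and follows essentially the same approach as the paper's, which likewise appeals to Lemma~\ref{lem:1parSubgroupsOfsl2R} and the conjugation-invariance of the trace; the paper's argument is simply much terser, omitting the explicit trace computations and the case split that you supply. Your attention to the ${\rm SL}(2,\mathbb R)$-versus-${\rm GL}(2,\mathbb R)$ conjugacy issue is a welcome refinement that the paper leaves implicit.
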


\begin{proof}
The elements of each of the one-parameter subgroups from Lemma~\ref{lem:1parSubgroupsOfsl2R} satisfy this property,
which is stable under the subgroup conjugation in $F\simeq{\rm SL}(2,\mathbb R)$.
Conversely, any element of ${\rm SL}(2,\mathbb R)$ satisfying this property
reduces to one of the matrix forms presented in Lemma~\ref{lem:1parSubgroupsOfsl2R}.
\end{proof}

Of course, Lemma~\ref{lem:1parSubgroupsOfsl2R} and Corollary~\ref{cor:1parSubgroupsOfsl2R}
are well known, see, e.g., \cite{mosk1994a} for Corollary~\ref{cor:1parSubgroupsOfsl2R}.

Recall that the classification of elements of the group~${\rm SL}(2,\mathbb R)$ is based on the values of their traces.
The matrix $A\in{\rm SL}(2,\mathbb R)$ with $A\ne\pm E$ is called \textit{elliptic} if $|\mathop{\rm tr}A|<2$,
\textit{parabolic} if $|\mathop{\rm tr}A|=2$ and \textit{hyperbolic} if $|\mathop{\rm tr}A|>2$.
Therefore, Corollary~\ref{cor:1parSubgroupsOfsl2R} implies
that the elliptic elements, the hyperbolic and parabolic elements with positive traces, $E$ and~$-E$
constitute $\exp\big({\rm sl}(2,\mathbb R)\big)$.
Moreover, the multiplication by~$-E$ switches the signs of matrix traces and thus maps
the hyperbolic and parabolic parts of $\exp\big({\rm sl}(2,\mathbb R)\big)$
onto the complement of~$\exp\big({\rm sl}(2,\mathbb R)\big)$ in~${\rm SL}(2,\mathbb R)$, and vice versa.
Roughly speaking, the action by~$-E$ change the relation ``belongs to (no) one-parameter subgroup'' to its negation
for hyperbolic and parabolic elements of~${\rm SL}(2,\mathbb R)$.
In this sense, the role of~$-E$ on the level of one-parameter subgroups
is analogous to the role of a discrete element on the level of connected components
since such an element permutes the identity component with other components of the corresponding Lie group.
This is why we call $-E$ a \emph{pseudo-discrete} element of~${\rm SL}(2,\mathbb R)$.
In general, given a Lie group~$G$ with Lie algebra~$\mathfrak g$ and $\exp(\mathfrak g)\ne G_{\rm id}$,
we call an element~$g\in G$ \emph{pseudo-discrete}
if \[g\big(G_{\rm id}\setminus\exp(\mathfrak g)\big)\subseteq\exp(\mathfrak g).\]
Note that any one-parameter subgroup of~${\rm SL}(2,\mathbb R)$ that contains an elliptic element
consists of elliptic elements and the matrices~$\pm E$,
and thus the multiplication by~$-E$ preserves each of such subgroups.

The presented arguments allow us to state and prove a characterization of the elements of~$G^{\rm ess}$
that belong to the image~$\exp(\mathfrak g^{\rm ess})$ of the exponential map.

\begin{theorem}\label{thm:HeatEqCharacterizationTransFromOneParSubs}
A transformation~$\Phi$ from~$G^{\rm ess}$ belongs to a one-parameter subgroup of~$G^{\rm ess}$
if and only if its $F$-component~$\Phi_F$ in the decomposition $\Phi=\Phi_F\circ\Phi_R$
according to the splitting $G^{\rm ess}=F\ltimes R$ of~$G^{\rm ess}$ belongs to a one-parameter subgroup of $F$.
In other words, $\Phi\in\exp(\mathfrak g^{\rm ess})$ if and only if $\Phi_F\in\exp(\mathfrak f)$.
\end{theorem}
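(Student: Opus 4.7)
The plan is to establish the two implications separately. The forward direction will reduce immediately to the general semidirect-product Lemma~\ref{lem:1parSubgroupsOfHeads}, while the converse will require an explicit construction of an exponential preimage that exploits the nilpotent structure of the normal factor~$R$.

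For $(\Rightarrow)$, if $\Phi\in\exp(\mathfrak g^{\rm ess})$ then $\Phi$ belongs to some one-parameter subgroup of $G^{\rm ess}$. Applying Lemma~\ref{lem:1parSubgroupsOfHeads} to the splitting $G^{\rm ess}=F\ltimes R$ yields that $\Phi_F$ belongs to a one-parameter subgroup of $F$, equivalently $\Phi_F\in\exp(\mathfrak f)$.

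For $(\Leftarrow)$, assume $\Phi_F\in\exp(\mathfrak f)$; since $\exp(\mathfrak g^{\rm ess})\subseteq G^{\rm ess}_{\rm id}=F\ltimes R_{\rm c}$, the regime of interest is $\Phi_R\in R_{\rm c}$. The first step is a reduction by $F$-conjugation: because $g\exp(Y)g^{-1}=\exp(\mathrm{Ad}(g)Y)$, the property $\Phi\in\exp(\mathfrak g^{\rm ess})$ is preserved when $\Phi$ is replaced by $\mathscr F_0\Phi\mathscr F_0^{-1}$ for any $\mathscr F_0\in F$, an operation that $F$-conjugates the $F$-component. Lemma~\ref{lem:1parSubgroupsOfsl2R} together with Corollary~\ref{cor:1parSubgroupsOfsl2R} (which handles the outlier $\Phi_F=\mathscr J$ corresponding to $-E$) then allow us to assume, without loss of generality, that $\Phi_F$ is one of $\mathrm{id}$, $\mathscr P^t(\epsilon)$, $\mathscr D(\epsilon)$, $\mathscr Q^+(\epsilon)$ or $\mathscr J$.

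The guiding observation is that the quotient homomorphism $\pi_F\colon G^{\rm ess}_{\rm id}\to F$ intertwines the exponential maps, $\pi_F\circ\exp=\exp\circ\pi_{\mathfrak f}$, so for any $Y=X_F+X_R\in\mathfrak f\dotplus\mathfrak r$ the $F$-component of $\exp(Y)$ is automatically $\exp(X_F)$. Fixing $X_F$ with $\exp(X_F)=\Phi_F$ (available by hypothesis), the remaining task becomes to exhibit $X_R\in\mathfrak r$ such that the $R_{\rm c}$-component of $\exp(X_F+X_R)$ equals $\Phi_R$, i.e., to establish surjectivity of the map $\psi_{X_F}\colon\mathfrak r\to R_{\rm c}$, $X_R\mapsto(\exp(X_F+X_R))_{R_{\rm c}}$.

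The main obstacle will be verifying this surjectivity in each of the five canonical cases. My plan is to write the one-parameter curve $c(t):=\exp(t(X_F+X_R))=\exp(tX_F)\cdot r(t)$ with $r(t)\in R_{\rm c}$ and differentiate: this produces a non-autonomous linear ODE for $r(t)$ on $R_{\rm c}$ whose right-hand side is $\mathrm{Ad}(\exp(-tX_F))X_R\in\mathfrak r$. Since $R_{\rm c}\simeq\mathrm H(1,\mathbb R)$ is simply connected and nilpotent, the ODE integrates globally, and the adjoint actions of the canonical one-parameter subgroups of $F$ on $\mathfrak r=\langle\mathcal G^x,\mathcal P^x,\mathcal I\rangle$ are already available from the pushforward tables in Section~\ref{sec:LinHeatClassIneqSubalgebras}. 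A routine case-by-case computation should then show that $\psi_{X_F}$ is surjective onto $R_{\rm c}$ (in fact, a diffeomorphism), delivering the required $X_R$ and thus $Y=X_F+X_R$ with $\exp(Y)=\Phi$.
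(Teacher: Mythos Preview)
Your proposal is correct and follows essentially the same route as the paper: the forward direction via Lemma~\ref{lem:1parSubgroupsOfHeads}, and the converse via conjugation of~$\Phi_F$ to a canonical one-parameter form followed by a case-by-case determination of the $\mathfrak r$-component. The only cosmetic difference is that you package the computation through the factorization $\exp\!\big(t(X_F+X_R)\big)=\exp(tX_F)\,r(t)$ and the resulting ODE $\dot r=\mathrm{Ad}(\exp(-tX_F))X_R\cdot r$, whereas the paper writes out explicitly the one-parameter subgroups generated by $\hat Q+a\mathcal G^x+b\mathcal P^x+c\mathcal I$ for $\hat Q\in\{\mathcal P^t,\mathcal D,\mathcal Q^+\}$ and solves the resulting algebraic systems for $(a,b,c)$ in terms of $(\lambda_1,\lambda_0,\sigma)$; note also that your separate case $\Phi_F=\mathscr J$ is redundant since $\mathscr J=\mathscr Q^+(\pi)$.
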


\begin{proof}
The ``only if'' part of lemma's statement directly follows from Lemma~\ref{lem:1parSubgroupsOfHeads}.
Let us prove the ``if'' part.
It is obvious that the proof can be done up to the subgroup conjugation in~$G^{\rm ess}$,
which indices the subgroup conjugation in~$F$.
In view of Lemma~\ref{lem:1parSubgroupsOfsl2R}, we then can assume that
$\Phi_F\in\{\mathscr P^t(\epsilon_1),\mathscr D(\epsilon_2),\mathscr Q^+(\epsilon_3),{\rm id}\}$
with fixed $\epsilon_1,\epsilon_2\ne0$ and fixed $\epsilon_3\in(0,2\pi)$.
If $\Phi_F={\rm id}$, then $\Phi\in R=\exp(\mathfrak r)$.
Recall that the radical~$\mathfrak r$ is a nilpotent algebra.
Further, we assume that $\Phi_F\ne{\rm id}$.
Consider the vector fields $Q$ of the form $Q=\hat Q+a\mathcal G^x+b\mathcal P^x+c\mathcal I$,
where $\hat Q\in\{\mathcal P^t,\mathcal D,\mathcal Q^+\}$.
Transformations from the one-parameter subgroups corresponding to these vector fields are respectively of the form
\begin{gather*}
\hat Q=\mathcal P^t\colon\quad
\tilde t=t+\epsilon_1,\quad
\tilde x=x+a\epsilon_1 t+b\epsilon_1+\tfrac12a\epsilon_1^2,
\\
\hphantom{\hat Q=\mathcal P^t\colon\quad}
\tilde u=\exp\big(-\tfrac12a\epsilon_1 x-\tfrac14a^2\epsilon_1^2t-\tfrac1{12}a^2\epsilon_1^3-\tfrac14ab\epsilon_1^2+c\epsilon_1\big)\,u,
\\[1.5ex]
\hat Q=\mathcal D\hphantom{^t}\colon\quad
\tilde t={\rm e}^{2\epsilon_2}t,\quad
\tilde x={\rm e}^{\epsilon_2}\big(x+a({\rm e}^{\epsilon_2}-1)t+b(1-{\rm e}^{-\epsilon_2})\big),
\\
\hphantom{\hat Q=\mathcal D^t\colon\quad}
\tilde u=
\exp\big(-\tfrac12a({\rm e}^{\epsilon_2}-1)x-\tfrac14a^2({\rm e}^{\epsilon_2}-1)^2t-\tfrac12ab({\rm e}^{\epsilon_2}-1-\epsilon_2)+c\epsilon_2-\tfrac12\epsilon_2\big)\,u,
\\[1.5ex]
\hat Q=\mathcal Q^+\colon\quad\!
\tilde t=\frac{\sin\epsilon_3+t\cos\epsilon_3}{\cos\epsilon_3-t\sin\epsilon_3},\quad
\tilde x=\frac{x+(a\sin\epsilon_3+b\cos\epsilon_3-b)t-a\cos\epsilon_3+a+b\sin\epsilon_3}{\cos\epsilon_3-t\sin\epsilon_3},
\\[1ex]
\hphantom{\hat Q=\mathcal Q^+\colon\quad\!}
\tilde u=
\sqrt{|\cos\epsilon_3-t\sin\epsilon_3|}\,\exp\big(\tfrac14(a^2+b^2)\epsilon_3+c\epsilon_3\big)\,u
\\[0.5ex]
\hphantom{\hat Q=\mathcal Q^+\colon\quad\!\tilde u=}
\times\exp\frac{\big((x+a)^2+b^2\big)\sin\epsilon_3+2b(1-\cos\epsilon_3)(x-bt+a)}{-4(\cos\epsilon_3-t\sin\epsilon_3)}.
\end{gather*}
In the notation of Theorem~\ref{thm:HeatEqSymGroup},
for arbitrary parameters $\lambda_1$, $\lambda_0$ and $\sigma>0$,
the transformation~$\Phi$ with reduced
$\Phi_F\in\{\mathscr P^t(\epsilon_1),\mathscr D(\epsilon_2),\mathscr Q^+(\epsilon_3)
\mid \epsilon_1,\epsilon_2\ne0,\, \epsilon_3\in(0,2\pi)\}$
belongs to the respective one-parameter subgroup,
where the parameters~$a$, $b$ and~$c$ satisfies the following system:
\begin{gather*}
\mathscr P^t(\epsilon_1)\colon\ \
\epsilon_1 a =\lambda_1,\quad
b\epsilon_1+\tfrac12a\epsilon_1^2=\lambda_0,\quad
c\epsilon_1-\tfrac1{12}a^2\epsilon_1^3-\tfrac14ab\epsilon_1^2=\ln\sigma,
\\[1ex]
\mathscr D(\epsilon_2)\colon\ \
a({\rm e}^{\epsilon_2}-1)   =\lambda_1,\quad
b(1-{\rm e}^{-\epsilon_2})=\lambda_0,\quad
c\epsilon_2-\tfrac12ab({\rm e}^{\epsilon_2}-1-\epsilon_2)-\tfrac12\epsilon_2=\ln\sigma,
\\[1ex]
\mathscr Q^+(\epsilon_3)\colon\
a\sin\epsilon_3-b(1-\cos\epsilon_3) =\lambda_1,\quad
a(1-\cos\epsilon_3)+b\sin\epsilon_3 =\lambda_0,\\
\hphantom{\mathscr Q^+(\epsilon_3)\colon\ }
c\epsilon_3+\tfrac18(a^2\!-b^2)\sin(2\epsilon_3)+\tfrac14(a^2\!+b^2)\epsilon_3-\tfrac12a^2\sin\epsilon_3
-\tfrac12ab(1\!-\!\cos\epsilon_3)\cos\epsilon_3=\ln\sigma.
\end{gather*}
It is obvious that each of the above systems has a unique solution.
\end{proof}

Recalling that $\varrho_1(\mathscr J)=-E$,
where $\varrho_1$ is an isomorphism between~$F<G^{\rm ess}$ and ${\rm SL}(2,\mathbb R)$ (see Section~\ref{sec:LinHeatPointSymGroup}),
we have the following assertion.

\begin{corollary}\label{cor:LinHeatJisPseudoDiscrete}
The transformation~$\mathscr J=(t,x,u)\mapsto(t,-x,u)$ is a pseudo-discrete element of~$G^{\rm ess}$.
\end{corollary}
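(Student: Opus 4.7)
The plan is to reduce the pseudo-discreteness claim to the level of the Levi factor $F \simeq {\rm SL}(2,\mathbb R)$, where everything is controlled by the trace of the associated matrix. By Theorem~\ref{thm:HeatEqCharacterizationTransFromOneParSubs}, membership in $\exp(\mathfrak g^{\rm ess})$ is detected on the $F$-component under the splitting $G^{\rm ess} = F \ltimes R$, and by Corollary~\ref{cor:1parSubgroupsOfsl2R}, a matrix $A \in {\rm SL}(2,\mathbb R)$ lies in $\exp({\rm sl}(2,\mathbb R))$ if and only if $\mathop{\rm tr} A > -2$ or $A = -E$. Since $\varrho_1(\mathscr J) = -E$, pre-multiplying by $\mathscr J$ corresponds to negating the matrix, which flips the sign of the trace and therefore, for most matrices, flips the property ``belongs to $\exp({\rm sl}(2,\mathbb R))$''.

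First I would check the preliminary requirement in the definition of pseudo-discreteness, namely $\exp(\mathfrak g^{\rm ess}) \ne G^{\rm ess}_{\rm id}$. Recall from Corollary~\ref{cor:LinHeatDiscretePointSyms} that $G^{\rm ess}_{\rm id}$ consists of those transformations from $G^{\rm ess}$ with $\sigma > 0$. Take any $\Phi \in F$ whose image $\varrho_1(\Phi)$ has trace strictly less than $-2$ (e.g., $\mathop{\rm diag}(-2,-1/2)$); then $\Phi \in G^{\rm ess}_{\rm id}$ but $\Phi \notin \exp(\mathfrak g^{\rm ess})$, so the inclusion $\exp(\mathfrak g^{\rm ess}) \subsetneq G^{\rm ess}_{\rm id}$ is strict.

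Next, I would verify that $\mathscr J \in G^{\rm ess}_{\rm id}$ (and more strongly $\mathscr J \in F \subseteq G^{\rm ess}_{\rm id}$ since $F \simeq {\rm SL}(2,\mathbb R)$ is connected), and then take an arbitrary $\Phi \in G^{\rm ess}_{\rm id} \setminus \exp(\mathfrak g^{\rm ess})$. Writing $\Phi = \Phi_F \circ \Phi_R$ with $\Phi_F \in F$ and $\Phi_R \in R$, and using that $\mathscr J \in F$, the composition decomposes as $\mathscr J \circ \Phi = (\mathscr J \circ \Phi_F) \circ \Phi_R$, so its $F$-component is $\mathscr J \circ \Phi_F$. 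Under $\varrho_1$, this corresponds to the matrix $-A$, where $A := \varrho_1(\Phi_F)$. By Theorem~\ref{thm:HeatEqCharacterizationTransFromOneParSubs} applied to~$\Phi$, the hypothesis $\Phi \notin \exp(\mathfrak g^{\rm ess})$ means $A \notin \exp({\rm sl}(2,\mathbb R))$, so by Corollary~\ref{cor:1parSubgroupsOfsl2R} we have $\mathop{\rm tr} A \leqslant -2$ and $A \ne -E$.

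The conclusion is then almost immediate: $\mathop{\rm tr}(-A) = -\mathop{\rm tr} A \geqslant 2 > -2$, so $-A \in \exp({\rm sl}(2,\mathbb R))$ by Corollary~\ref{cor:1parSubgroupsOfsl2R}, hence $\mathscr J \circ \Phi_F \in \exp(\mathfrak f)$, hence $\mathscr J \circ \Phi \in \exp(\mathfrak g^{\rm ess})$ by Theorem~\ref{thm:HeatEqCharacterizationTransFromOneParSubs}. This establishes the required inclusion $\mathscr J\bigl(G^{\rm ess}_{\rm id} \setminus \exp(\mathfrak g^{\rm ess})\bigr) \subseteq \exp(\mathfrak g^{\rm ess})$. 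There is no real obstacle here, only bookkeeping: the only subtle point to check is that the $F$-component is multiplicative under composition with an element already in $F$, which follows cleanly because $\mathscr J \in F$ and the decomposition $G^{\rm ess} = F \ltimes R$ is a semidirect product of subgroups.
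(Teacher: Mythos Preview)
Your proof is correct and follows essentially the same approach as the paper: the paper's justification is the single sentence ``Recalling that $\varrho_1(\mathscr J)=-E$\ldots'' together with the earlier discussion that multiplication by $-E$ flips the sign of the trace and hence maps ${\rm SL}(2,\mathbb R)\setminus\exp\big({\rm sl}(2,\mathbb R)\big)$ into $\exp\big({\rm sl}(2,\mathbb R)\big)$, and you have spelled out exactly this argument via Theorem~\ref{thm:HeatEqCharacterizationTransFromOneParSubs} and Corollary~\ref{cor:1parSubgroupsOfsl2R}. Your additional verification that $\exp(\mathfrak g^{\rm ess})\subsetneq G^{\rm ess}_{\rm id}$ and that the $F$-component behaves multiplicatively when composing with $\mathscr J\in F$ are details the paper leaves implicit.
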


Given a system of differential equations, it is natural to call
pseudo-discrete elements of its point symmetry (pseudo)group \emph{pseudo-discrete point symmetries} of this system.
In the suggested terminology, the transformation~$\mathscr J=(t,x,u)\mapsto(t,-x,u)$ can be called
a pseudo-discrete point symmetry of the equation~\eqref{eq:LinHeat}
only if the following conjecture holds true.

\begin{conjecture}\label{conj:HeatEqCharacterizationTransFromOneParSubsG}
A transformation~$\Phi$ from the pseudogroup~$G$ belongs to a one-parameter subgroup of~$G$
if and only if its $G^{\rm ess}$-component~$\Phi^{\rm ess}$ in the decomposition $\Phi=\Phi^{\rm ess}\circ\Phi^{\rm lin}$
according to the splitting $G=G^{\rm ess}\ltimes G^{\rm lin}$ of~$G$ belongs to a one-parameter subgroup of $G^{\rm ess}$.
In other words, $\Phi\in\exp(\mathfrak g)$ if and only if $\Phi^{\rm ess}\in\exp(\mathfrak g^{\rm ess})$.
\end{conjecture}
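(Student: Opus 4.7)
The plan is to mimic the structure of the proof of Theorem~\ref{thm:HeatEqCharacterizationTransFromOneParSubs}. The ``only if'' direction should follow from Lemma~\ref{lem:1parSubgroupsOfHeads} applied to the semidirect decomposition $G = G^{\rm ess} \ltimes G^{\rm lin}$, once one checks that the lemma's proof remains valid when the normal factor is the infinite-dimensional pseudogroup $G^{\rm lin}$; this uses only that the natural projection $G \to G^{\rm ess}$ is a homomorphism and so should be routine.

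For the ``if'' direction, suppose $\Phi = \Phi^{\rm ess} \circ \mathscr Z(g)$ with $\Phi^{\rm ess} = \exp(\hat Q)$ for some $\hat Q \in \mathfrak g^{\rm ess}$. I would search for a solution $f$ of the heat equation such that $\exp(\hat Q + \mathcal Z(f)) = \Phi$. Since $\mathfrak g^{\rm lin}$ is an abelian ideal of $\mathfrak g$ and $\mathrm{ad}(\hat Q)$ preserves it, the pulled-back vector fields $\exp(\tau\hat Q)^{*}\mathcal Z(f)$ all commute with one another, whence the Duhamel-type identity
\[
\exp\bigl(\hat Q + \mathcal Z(f)\bigr) = \exp(\hat Q) \circ \mathscr Z\bigl(B_{\hat Q}\,f\bigr), \qquad
B_{\hat Q}\,f := \int_0^1 e^{\tau\,\mathrm{ad}(\hat Q)}\, f\, d\tau,
\]
where $\mathrm{ad}(\hat Q)$ on $\mathfrak g^{\rm lin}$ is viewed as a linear operator on solutions of the heat equation via the identification $\mathcal Z(f) \leftrightarrow f$. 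The problem thus reduces to surjectivity of $B_{\hat Q}$, formally $\bigl(e^{\mathrm{ad}(\hat Q)}-I\bigr)/\mathrm{ad}(\hat Q)$, on the space of solutions.

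To establish this surjectivity, I would reduce $\hat Q$ by conjugation in $G^{\rm ess}$ (which conjugates $\Phi$ in $G$) to one of the canonical forms from Lemma~\ref{lem:1parSubgroupsOfsl2R}, augmented by a radical part $a\mathcal G^x + b\mathcal P^x + c\mathcal I$ as in the proof of Theorem~\ref{thm:HeatEqCharacterizationTransFromOneParSubs}. For each canonical $\hat Q$ the induced operator $\mathrm{ad}(\hat Q)$ on solutions is explicit: for $\hat Q=\mathcal P^t$ it is $\partial_t$, so that the equation $B_{\hat Q}f=g$ becomes the linear delay identity $f(t+1,x)-f(t,x)=g_t(t,x)$; for $\hat Q=\mathcal D$ it is a weighted scaling operator; and analogous explicit forms arise for the elliptic generator $\mathcal Q^+$ and for the radical elements. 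In each such case I would solve for $f$ either by Fourier-type decomposition on the solution space or by repeated application of Lemma~\ref{lem:ExistSolut} together with the recursion operators described in Section~\ref{sec:LinHeatGenSyms}.

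The hard part will be controlling surjectivity of $B_{\hat Q}$ on the full infinite-dimensional solution space. Since the scalar symbol $(e^z-1)/z$ vanishes at $z=2\pi in$ with $n\in\mathbb Z\setminus\{0\}$, the elliptic case $\hat Q=\mathcal Q^+$, whose ad-action has purely imaginary spectrum, potentially yields genuine obstructions; one would need either to exclude such spectrum by restricting to an appropriate class of solutions (for instance, real-analytic ones of at most exponential growth) or to show that the resulting obstructions are automatically compatible with every admissible right-hand side $g$. This analytic difficulty is presumably why the authors state the result only as a conjecture rather than a theorem.
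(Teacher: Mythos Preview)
The paper does \emph{not} prove this statement: it is explicitly labelled a Conjecture and left open, with the remark in Section~\ref{sec:Conclusion} that answering whether $\mathscr J$ is a pseudo-discrete symmetry of~\eqref{eq:LinHeat} ``requires proving Conjecture~\ref{conj:HeatEqCharacterizationTransFromOneParSubsG}''. There is therefore no proof in the paper to compare your attempt against.

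Your outline is a reasonable plan of attack and correctly parallels the finite-dimensional argument of Theorem~\ref{thm:HeatEqCharacterizationTransFromOneParSubs}. The Duhamel-type factorisation is essentially correct (check the sign in the exponent of $\mathrm{ad}(\hat Q)$), and reducing the ``if'' direction to surjectivity of $B_{\hat Q}$ on the solution space of~\eqref{eq:LinHeat} is the right reformulation. You have also correctly located the principal obstruction: for the elliptic generator $\mathcal Q^+$ one has $\mathscr Q^+(2\pi)={\rm id}$, so $e^{2\pi\,\mathrm{ad}(\mathcal Q^+)}$ acts trivially on $\mathfrak g^{\rm lin}$ and the symbol $(e^z-1)/z$ may vanish on the spectrum.

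However, your proposal does not actually overcome this obstruction; the two suggested remedies (restricting the function class, or invoking Lemma~\ref{lem:ExistSolut} and the recursion operators of Section~\ref{sec:LinHeatGenSyms}) are not shown to produce a preimage under~$B_{\hat Q}$. Moreover, even the non-elliptic cases are not as routine as you suggest: for $\hat Q=\mathcal P^t$ the equation $B_{\hat Q}f=g$ amounts to solving $\int_0^1 f(t+\tau,x)\,{\rm d}\tau=g(t,x)$ within the solution space of~\eqref{eq:LinHeat}, and neither a Fourier argument nor Lemma~\ref{lem:ExistSolut} (which concerns first-order constraints, not integral ones) settles this without further work. What you have written is an accurate diagnosis of why the statement remains conjectural, not a proof of it.
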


\section{Generalized symmetries}\label{sec:LinHeatGenSyms}

Since the equation~\eqref{eq:LinHeat} is an evolution equation,
we can naturally identify the quotient algebra of generalized symmetries of this equation
with respect to the equivalence of generalized symmetries
with the algebra
\[
\Sigma:=\big\{\eta[u]\p_u\mid(\mathrm D_t-\mathrm D_x^{\,\,2})\eta[u]=0\big\}
\]
of canonical representatives of equivalence classes, see~\cite[Section~5.1]{olve1993A}.
Here and in what follows
the jet variable~$u_k$ is associated with the derivative $\p^ku/\p x^k$, $k\in\mathbb N_0$,
and the jet variables $(t,x,u_k,k\in\mathbb N_0)$
constitute the standard coordinates on the manifold defined by the equation~\eqref{eq:LinHeat}
and its differential consequences in the infinite-order jet space $\mathrm J^\infty(\mathbb R^2_{t,x}\times\mathbb R_u)$
with the independent variables $(t,x)$ and the dependent variable~$u$.
The notation $\eta[u]$ stays for a differential function of~$u$
that depends on $t$, $x$ and a finite number of $u_k$.
$\mathrm D_t$ and $\mathrm D_x$ are the operators of total derivatives in~$t$ and~$x$, respectively,
that are restricted to such differential functions and the solution set of the equation~\eqref{eq:LinHeat},
\[
\mathrm D_t:=\p_t+\sum_{k=0}^\infty u_{k+2}\p_{u_k},\quad
\mathrm D_x:=\p_x+\sum_{k=0}^\infty u_{k+1}\p_{u_k}.
\]

The subspace
$\Sigma^n:=\big\{\eta[u]\p_u\in\Sigma\mid\ord\eta[u]\leqslant n\big\}$, $n\in\mathbb N_0\cup\{-\infty\}$,
of~$\Sigma$ is interpreted as the space of generalized symmetries of orders less than or equal to~$n$.%
\footnote{
The order~$\ord F[u]$ of a differential function~$F[u]$ is the highest order of derivatives of~$u$ involved in~$F[u]$
if there are such derivatives, and $\ord F[u]=-\infty$ otherwise.
If $Q=\eta[u]\p_u$, then $\ord Q:=\ord\eta[u]$.
}
The subspace~$\Sigma^{-\infty}$ can be identified with the subalgebra~$\mathfrak g^{\rm lin}$
of Lie symmetries of the equation~\eqref{eq:LinHeat}
that are associated with the linear superposition of solutions of this equation,
\[\Sigma^{-\infty}=\{\mathfrak Z(h):=h(t,x)\p_u\mid h_t=h_{xx}\}\simeq\mathfrak g^{\rm lin}.\]
The subspace family $\{\Sigma^n\mid n\in\mathbb N_0\cup\{-\infty\}\}$ filters the algebra~$\Sigma$.
Denote $\Sigma^{[n]}:=\Sigma^n/\Sigma^{n-1}$, $n\in\mathbb N$,
$\Sigma^{[0]}:=\Sigma^0/\Sigma^{-\infty}$ and $\Sigma^{[-\infty]}:=\Sigma^{-\infty}$.
The space $\Sigma^{[n]}$ is naturally identified with the space of canonical representatives of cosets of~$\Sigma^{n-1}$
and thus assumed as the space of $n$th order generalized symmetries of the equation~\eqref{eq:LinHeat},
$n\in\mathbb N_0\cup\{-\infty\}$.

In view of the linearity of the equation~\eqref{eq:LinHeat},
an important subalgebra of its generalized symmetries consists of the linear generalized symmetries,
\[
\Lambda:=\bigg\{\eta[u]\p_u\in\Sigma\ \Big|\ \exists\,n\in\mathbb N_0, \exists\,\eta^k=\eta^k(t,x), k=0,\dots,n\colon
\eta[u]=\sum_{k=0}^n\eta^k(t,x)u_k\bigg\}.
\]
The subspace $\Lambda^n:=\Lambda\cap\Sigma^n$ of~$\Lambda$ with $n\in\mathbb N_0$
is constituted by the generalized symmetries with characteristics of the form
\begin{gather}\label{eq:LinHeatLinGenSyms}
\eta[u]=\sum_{k=0}^n\eta^k(t,x)u_k.
\end{gather}
These symmetries are of order~$n$ if and only if the coefficient~$\eta^n$ does not vanish.
The quotient spaces $\Lambda^{[n]}=\Lambda^n/\Lambda^{n-1}$, $n\in\mathbb N$, and the subspace $\Lambda^{[0]}=\Lambda^0$
are naturally embedded into the respective spaces $\Sigma^{[n]}$, $n\in\mathbb N_0$,
when taking linear canonical representatives for cosets of~$\Sigma^{n-1}$ containing linear generalized symmetries.
We interpret the space $\Lambda^{[n]}$ as the space of $n$th order linear generalized symmetries
of the equation~\eqref{eq:LinHeat}, $n\in\mathbb N_0$.

\begin{lemma}\label{lem:LinHeatGenSymsDim}
$\dim\Lambda^{[n]}=n+1$, $n\in\mathbb N_0$.
\end{lemma}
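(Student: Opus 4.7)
The plan is to substitute the ansatz $\eta[u] = \sum_{k=0}^n \eta^k(t,x) u_k$ directly into the symmetry condition $(\mathrm D_t - \mathrm D_x^{\,\,2})\eta = 0$ and split with respect to the independent jet variables. A short calculation using the restricted total derivatives gives
\[
(\mathrm D_t - \mathrm D_x^{\,\,2})\eta = \sum_{k=0}^n(\eta^k_t - \eta^k_{xx})u_k - 2\sum_{k=1}^{n+1}\eta^{k-1}_x u_k,
\]
so the determining system on the coefficients $\eta^k$ reads $\eta^n_x = 0$ (from $u_{n+1}$), $\eta^{k-1}_x = \tfrac12(\eta^k_t - \eta^k_{xx})$ for $k = 1, \dots, n$, and $\eta^0_t = \eta^0_{xx}$ (from $u_0$).

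To obtain the upper bound $\dim\Lambda^{[n]} \leqslant n+1$, I will first observe that $\eta^n = \eta^n(t)$ depends only on~$t$, and then iterate the recursion for $\eta^{k-1}_x$ downward. A straightforward induction shows that $\eta^{n-j}$ is a polynomial in~$x$ of degree~$j$ whose $x^j$-coefficient equals $(\eta^n)^{(j)}(t)/(2^j j!)$; in particular, $\eta^0$ is degree~$n$ in~$x$ with $x^n$-coefficient $(\eta^n)^{(n)}(t)/(2^n n!)$. Substituting into $\eta^0_t = \eta^0_{xx}$ and comparing the coefficient of~$x^n$ on both sides forces $(\eta^n)^{(n+1)}(t) = 0$, so $\eta^n$ is a polynomial in~$t$ of degree at most~$n$, i.e.\ lies in the $(n+1)$-dimensional space $\mathbb R[t]_{\leqslant n}$. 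Since the evaluation map $\Lambda^n \to \mathbb R[t]_{\leqslant n}$, $\eta \mapsto \eta^n$, is linear with kernel exactly $\Lambda^{n-1}$ by the order convention (taking $\Lambda^{-1}:=\{0\}$), it descends to an injection $\Lambda^{[n]} \hookrightarrow \mathbb R[t]_{\leqslant n}$, giving the inequality.

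For the matching lower bound I will produce explicit order-$n$ generalized symmetries via recursion operators. Setting $\mathcal R_1 := \mathrm D_x$ and $\mathcal R_2 := t\mathrm D_x + \tfrac12 x$, which are associated with the Lie symmetries $\mathcal P^x$ and $\mathcal G^x$, a direct computation yields $[\mathrm D_t - \mathrm D_x^{\,\,2}, \mathcal R_i] = 0$ on the equation manifold, so each $\mathcal R_i$ preserves $\Lambda$ and raises the order by~$1$. An easy induction on~$k$ then shows that $\mathcal R_2^k \mathcal R_1^{n-k}(u) \in \Lambda^n$ has leading coefficient $\eta^n = t^k$ (the top-jet part of $\mathcal R_2$ is $t\mathrm D_x$, while the $\tfrac12 x$-piece contributes only to strictly lower orders). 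Since $1, t, \dots, t^n$ are linearly independent in $\mathbb R[t]_{\leqslant n}$, the cosets $[\mathcal R_2^k \mathcal R_1^{n-k}(u)]$ for $k = 0, \dots, n$ are linearly independent in $\Lambda^{[n]}$, yielding $\dim\Lambda^{[n]} \geqslant n+1$ and, combined with the previous paragraph, the desired equality. The only delicate step is in paragraph~2, where the polynomial-in-$x$ structure of $\eta^0$ must be tracked precisely enough to isolate the sharp ODE $(\eta^n)^{(n+1)} = 0$; everything else reduces to routine iteration or verification.
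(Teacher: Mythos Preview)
Your proof is correct. The upper bound argument is essentially identical to the paper's: both derive the same determining system, integrate downward to obtain $\eta^{n-j}$ as a polynomial in~$x$ of degree~$j$ with leading coefficient $(\eta^n)^{(j)}/(2^j j!)$, and then read off $(\eta^n)^{(n+1)}=0$ from the lowest equation.

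The lower bound is where you diverge. The paper stays inside the determining system and argues directly that for every $\theta\in\mathbb R_n[t]$ the system $\Delta_{[0,n]}$ admits a solution, so the map $\Lambda^{[n]}\to\mathbb R_n[t]$, $[\eta]\mapsto\eta^n$, is surjective as well as injective. You instead import the recursion operators $\mathrm D_x$ and $\mathrm G=t\mathrm D_x+\tfrac12x$ and exhibit $n+1$ explicit order-$n$ symmetries with leading coefficients $1,t,\dots,t^n$. This is cleaner and more constructive, and avoids having to justify solvability of the cascade (which the paper asserts rather tersely). The trade-off is that you are pre-empting the content of the paper's next lemma, where exactly these operators $\mathrm D_x$ and $\mathrm G$ are introduced to produce the basis $\mathfrak Q^{k,n-k}=(\mathrm G^k\mathrm D_x^{n-k}u)\p_u$ of $\Sigma^{[n]}$; the paper's organization keeps the present lemma purely about the determining equations and defers the recursion-operator machinery. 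Either route is fine.
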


\begin{proof}
The criterion of invariance of the equation~\eqref{eq:LinHeat}, $(\mathrm D_t-\mathrm D_x^{\,\,2})\eta=0$,
with respect to linear generalized symmetries with characteristics~$\eta$ of the form~\eqref{eq:LinHeatLinGenSyms}
implies the system of determining equations for the coefficients of these characteristics,
\[
\Delta_k\colon\ \eta^k_t-\eta^k_{xx}=2\eta^{k-1}_x,\quad k=0,\dots,n+1,
\]
where we assume $\eta^{-1}$ and $\eta^{n+1}$ to vanish.
We first integrate the equation~$\Delta_{n+1}$: $\eta^n_x=0$, obtaining $\eta^n=\theta(t)$ for some smooth function~$\theta$ of~$t$.
After substituting the obtained value of~$\eta^n$ into $\Delta_n$,
we consider the set~$\Delta_{[0,n]}$ of the equations~$\Delta_k$ with $k=0,\dots,n$
as a system of inhomogeneous linear differential equations with respect to the coefficients~$\eta^{k'}$, ${k'}=0,\dots,n-1$.
It is convenient to represent the equations~$\Delta_k$ with $k=1,\dots,n$ as
$\eta^{k-1}_x=\frac12(\eta^k_t-\eta^k_{xx})$ and integrate them with respect to~$x$ in descending order.
As a result, we derive the following expression for $\eta^{n-k}$, $k=0,\dots,n-1$:
\[
\eta^{n-k}=\frac1{2^kk!}\frac{{\rm d}^k\theta}{{\rm d}t^k}x^k+R^{n-k},
\]
where $R^{n-k}$ is a smooth function of $(t,x)$, which is a polynomial in~$x$ with $\deg_xR^{n-k}<k$.
In particular,
\[
\eta^0=\frac1{2^nn!}\frac{{\rm d}^n\theta}{{\rm d}t^n}x^n+R^0,
\]
where $R^0$ is a smooth function of $(t,x)$, which is a polynomial in~$x$ with $\deg_xR^0<n$.
In view of this, the equation~$\Delta_0$: $\eta^0_t-\eta^0_{xx}=0$ means that
$\eta^0$ is a polynomial solution of the linear heat equation, and thus ${\rm d}^n\theta/{\rm d}t^n=\const$,
i.e., $\theta$ is a polynomial with respect to~$t$ of degree at most~$n$, $\theta\in\mathbb R_n[t]$.
Moreover, for any $\theta\in\mathbb R_n[t]$ there is a solution of~$\Delta_{[0,n]}$.
In other words, the vector spaces~$\Lambda^{[n]}$ and~$\mathbb R_n[t]$ are isomorphic.
\end{proof}

\begin{corollary}
$\dim\Lambda^n=\sum\limits_{k=0}^n\dim\Lambda^{[k]}=\frac12(n+1)(n+2)<+\infty$, $n\in\mathbb N_0$.
\end{corollary}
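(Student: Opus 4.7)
The plan is to derive the corollary as a direct consequence of Lemma~\ref{lem:LinHeatGenSymsDim} by exploiting the filtration $\Lambda^0\subset\Lambda^1\subset\cdots\subset\Lambda^n$ inherited from the order filtration on~$\Sigma$. The key observation is that $\Lambda^{[k]}=\Lambda^k/\Lambda^{k-1}$ for $k\geqslant 1$ and $\Lambda^{[0]}=\Lambda^0$, so that the short exact sequences
\[
0\longrightarrow\Lambda^{k-1}\longrightarrow\Lambda^k\longrightarrow\Lambda^{[k]}\longrightarrow 0
\]
yield $\dim\Lambda^k=\dim\Lambda^{k-1}+\dim\Lambda^{[k]}$ at each step.

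First I would set up a straightforward induction on~$n$. The base case $n=0$ is immediate: $\dim\Lambda^0=\dim\Lambda^{[0]}=1$ by Lemma~\ref{lem:LinHeatGenSymsDim}, matching $\frac12(0+1)(0+2)=1$. For the inductive step, assuming the formula for $n-1$, I would combine the recurrence $\dim\Lambda^n=\dim\Lambda^{n-1}+\dim\Lambda^{[n]}$ with the lemma's output $\dim\Lambda^{[n]}=n+1$ to obtain
\[
\dim\Lambda^n=\sum_{k=0}^{n-1}\dim\Lambda^{[k]}+\dim\Lambda^{[n]}=\sum_{k=0}^n\dim\Lambda^{[k]}=\sum_{k=0}^n(k+1)=\frac{(n+1)(n+2)}2.
\]

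There is essentially no obstacle here: the only tiny subtlety is to note that the quotients $\Lambda^{[k]}$ indeed account for the full dimension increment when passing from $\Lambda^{k-1}$ to $\Lambda^k$, which in turn relies on the fact that $\Lambda^k$ is finite-dimensional (granted inductively) so that the short exact sequence is a genuine direct sum relation on dimensions. Finiteness of $\dim\Lambda^n$ then follows from the closed-form expression. No analytic or algebraic difficulty arises beyond invoking the previously established lemma and the elementary arithmetic series identity.
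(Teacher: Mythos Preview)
Your proposal is correct and is exactly the intended argument: the paper states this corollary without proof, treating it as an immediate consequence of Lemma~\ref{lem:LinHeatGenSymsDim} via the filtration $\Lambda^{-\infty}\subset\Lambda^0\subset\cdots\subset\Lambda^n$ and the definition $\Lambda^{[k]}=\Lambda^k/\Lambda^{k-1}$. Your write-up just makes explicit the telescoping sum the paper leaves implicit.
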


\begin{lemma}\label{lemma:IDFM:SigmaIsomorphism}
$\Sigma^{[n]}=\tilde\Sigma^{[n]}:=\big\langle\mathfrak Q^{k,n-k}, k=0,\dots,n\big\rangle$,
where $\mathfrak Q^{kl}:=(\mathrm G^k\mathrm D_x^lu)\p_u$ and $\mathrm G:=t\mathrm D_x+\frac12x$.
\end{lemma}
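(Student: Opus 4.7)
The plan is to reduce to the already-established description of $\Lambda^{[n]}$ from Lemma~\ref{lem:LinHeatGenSymsDim} and to compute the ``leading $u_n$-coefficient'' of each generator $\mathfrak Q^{k,n-k}$. First I would verify that both $\mathrm G=t\mathrm D_x+\tfrac12x$ and $\mathrm D_x$ are recursion operators for~\eqref{eq:LinHeat}. A short commutator computation on jet functions gives $[\mathrm D_t,\mathrm G]=\mathrm D_x$ and $[\mathrm D_x^{\,2},\mathrm G]=\mathrm D_x$, hence $[\mathrm D_t-\mathrm D_x^{\,2},\mathrm G]=0$, while $[\mathrm D_t-\mathrm D_x^{\,2},\mathrm D_x]=0$ is trivial. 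Since the characteristic $u$ of the Lie symmetry $\mathcal I=u\p_u$ satisfies $(\mathrm D_t-\mathrm D_x^{\,2})u\equiv0$ on the equation manifold, iterating these recursion operators shows that each $\mathfrak Q^{k,l}=(\mathrm G^k\mathrm D_x^{\,l}u)\p_u$ lies in~$\Sigma$, and the jet-order count gives $\mathfrak Q^{k,n-k}\in\Sigma^n$; by linearity in $u$ and its derivatives it in fact lies in $\Lambda^n$.

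Next I would pin down the leading coefficient. By a direct induction on $k$,
\[
\mathrm G^k(u_{n-k})=t^ku_n+\Psi^k(t,x,u_0,\dots,u_{n-1}),
\]
where $\Psi^k$ involves derivatives of order strictly less than $n$. The key observation is that, expanding $\mathrm G^k$ as a sum over all length-$k$ words in the non-commuting ``letters'' $t\mathrm D_x$ and $\tfrac12x$, the only word that can raise the derivative index of $u_{n-k}$ all the way up to $u_n$ is $(t\mathrm D_x)^k$, contributing exactly $t^ku_n$; any word containing a letter $\tfrac12x$ has fewer than $k$ applications of $\mathrm D_x$ and hence lands in order at most $n-1$. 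Therefore, under the identification $\Lambda^{[n]}\cong\mathbb R_n[t]$ established in the proof of Lemma~\ref{lem:LinHeatGenSymsDim} via the coefficient $\theta(t)=\eta^n$ of $u_n$, the coset $[\mathfrak Q^{k,n-k}]$ corresponds to the monomial $t^k$.

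Since $\{1,t,\dots,t^n\}$ is a basis of $\mathbb R_n[t]$ and $\dim\Lambda^{[n]}=n+1$ by Lemma~\ref{lem:LinHeatGenSymsDim}, the $n+1$ cosets $[\mathfrak Q^{0,n}],\dots,[\mathfrak Q^{n,0}]$ form a basis of $\Lambda^{[n]}$, whence $\tilde\Sigma^{[n]}+\Lambda^{n-1}=\Lambda^n$. To finish I would invoke the decomposition $\Sigma=\Lambda\lsemioplus\Sigma^{-\infty}$ recorded in the introduction (following from the Shapovalov--Shirokov theorem~\cite[Theorem~4.1]{shap1992a}), which yields $\Sigma^n=\Lambda^n+\Sigma^{-\infty}$ and hence $\Sigma^{[n]}=\Lambda^{[n]}$ for every $n\in\mathbb N_0$. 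The substantive obstacle is precisely this last identification: one must know that every generalized symmetry is, modulo lower-order symmetries and the trivial $u$-independent ones, linear in $u$ and its derivatives. A self-contained derivation would split $\eta[u]$ by total homogeneity degree in $(u_0,u_1,\dots)$ -- the determining equation $(\mathrm D_t-\mathrm D_x^{\,2})\eta=0$ respects this grading because $\mathrm D_t-\mathrm D_x^{\,2}$ is linear -- and then use the order-counting argument above to rule out homogeneity degrees $\ge2$. The remaining steps are routine commutator computations and bookkeeping on the jet.
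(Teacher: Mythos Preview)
Your proof is correct and follows essentially the same route as the paper: verify that $\mathrm D_x$ and $\mathrm G$ are recursion operators (so each $\mathfrak Q^{k,n-k}$ lies in $\Lambda^n$), invoke the Shapovalov--Shirokov theorem together with Lemma~\ref{lem:LinHeatGenSymsDim} to identify $\Sigma^{[n]}=\Lambda^{[n]}$, and conclude by a dimension count. The one place you go further than the paper is in justifying $\dim\tilde\Sigma^{[n]}=n+1$: the paper simply asserts this, whereas you compute the leading $u_n$-coefficient of $\mathfrak Q^{k,n-k}$ to be $t^k$ and thereby exhibit the cosets as the monomial basis of $\mathbb R_n[t]\cong\Lambda^{[n]}$, which is a genuine (if easy) detail the paper leaves implicit. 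Your closing remark about a self-contained replacement for Shapovalov--Shirokov via the homogeneity grading is a reasonable aside, though the phrase ``order-counting argument above'' does not by itself rule out degree~$\geqslant2$ components---that step needs its own argument, which is precisely what the cited theorem supplies.
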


\begin{proof}
In view of the Shapovalov--Shirokov theorem~\cite[Theorem~4.1]{shap1992a},
Lemma~\ref{lem:LinHeatGenSymsDim} implies that $\Sigma^{[n]}=\Lambda^{[n]}$ for $n\in\mathbb N_0$.
The differential functions $\mathrm D_xu=u_x$ and $\mathrm Gu=tu_x+\frac12xu$ are the characteristics
of the Lie symmetries~$-\mathcal P^x$ and~$-\mathcal G^x$ of the equation~\eqref{eq:LinHeat}, respectively,
and hence the operators~$\mathrm D_x$ and~$\mathrm G$ are recursion operators of this equation
\cite[Example~5.21]{olve1993A}.
Hence any operator~$\mathfrak Q$ in the universal enveloping algebra generated by these operators
is a symmetry operator of~\eqref{eq:LinHeat},
i.e., a generalized vector field~$(\mathfrak Qu)\p_u$ is a generalized symmetry of~\eqref{eq:LinHeat}.
Therefore, $\big\langle\mathfrak Q^{kl}, k,l\in\mathbb N_0\big\rangle\subseteq\Lambda\subseteq\Sigma$,
and thus $\tilde\Sigma^{[n]}\subseteq\Sigma^{[n]}$.
In addition, $\dim\tilde\Sigma^{[n]}=n+1=\dim\Sigma^{[n]}$, which means that $\Sigma^{[n]}=\tilde\Sigma^{[n]}$.
\end{proof}

Since $\Sigma=\Sigma^{-\infty}\dotplus\Sigma^{[0]}\dotplus\Sigma^{[1]}\dotplus\cdots$,
in view of Lemma~\ref{lemma:IDFM:SigmaIsomorphism} we obtain the following assertion.

\begin{theorem}\label{thm:LinHeatAlgOfGenSyms}
The algebra of generalized symmetries of the (1+1)-dimensional linear heat equation~\eqref{eq:LinHeat} is
$\Sigma=\Lambda\lsemioplus\Sigma^{-\infty}$, where
\begin{gather*}
\Lambda:=\big\langle\mathfrak Q^{kl},\,k,l\in\mathbb N_0\big\rangle,
\quad
\Sigma^{-\infty}:=\big\{\mathfrak Z(h)\big\}
\end{gather*}
with $\mathfrak Q^{kl}:=(\mathrm G^k\mathrm D_x^lu)\p_u$, $\mathrm G:=t\mathrm D_x+\frac12x$,
$\mathfrak Z(h):=h(t,x)\p_u$,
and the parameter function~$h$ runs through the solution set of~\eqref{eq:LinHeat}.
\end{theorem}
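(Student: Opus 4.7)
The plan is to combine the vector-space filtration of $\Sigma$ with Lemma~\ref{lemma:IDFM:SigmaIsomorphism} and to upgrade the resulting vector-space decomposition to a semidirect-sum decomposition of Lie algebras. First I would take as starting point the decomposition
\[
\Sigma=\Sigma^{-\infty}\dotplus\Sigma^{[0]}\dotplus\Sigma^{[1]}\dotplus\cdots
\]
which was recorded just before the theorem and reflects the fact that every generalized symmetry has a well-defined order and that each quotient $\Sigma^n/\Sigma^{n-1}$ is represented by the canonical representatives in $\Sigma^{[n]}$. From Lemma~\ref{lemma:IDFM:SigmaIsomorphism}, each $\Sigma^{[n]}$ is spanned by the $n+1$ elements $\mathfrak Q^{k,n-k}$, $k=0,\dots,n$, so summing over $n$ gives
\[
\bigoplus_{n\in\mathbb N_0}\Sigma^{[n]}=\big\langle\mathfrak Q^{kl},\,k,l\in\mathbb N_0\big\rangle.
\]
Moreover, Lemmas~\ref{lem:LinHeatGenSymsDim} and~\ref{lemma:IDFM:SigmaIsomorphism} jointly imply that $\Sigma^{[n]}=\Lambda^{[n]}$, so this right-hand side coincides with the algebra $\Lambda$ of linear generalized symmetries defined earlier. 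This yields the vector-space identity $\Sigma=\Lambda\dotplus\Sigma^{-\infty}$ together with the claimed description $\Lambda=\langle\mathfrak Q^{kl}\rangle$.

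Next I would promote this to a semidirect-sum decomposition by checking the two Lie-algebraic conditions. For the bracket on $\Sigma$ (the evolutionary commutator $\{\eta_1,\eta_2\}=\mathrm D\eta_1[\eta_2]-\mathrm D\eta_2[\eta_1]$ with $\mathrm D\eta[\zeta]:=\sum_k(\p\eta/\p u_k)\mathrm D_x^{\,k}\zeta$), the subspace $\Sigma^{-\infty}$ is an ideal: if $h=h(t,x)$ then $\mathrm Dh[\eta]=0$, so $\{\eta,h\}=-\mathrm D\eta[h]$ depends only on $(t,x)$; being already in $\Sigma$, it automatically solves~\eqref{eq:LinHeat} and therefore lies in $\Sigma^{-\infty}$. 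Similarly, $\Lambda$ is a subalgebra because the Fréchet derivative of a characteristic linear in $u$ and its $x$-derivatives is again linear, so the bracket of two elements of $\Lambda$ stays in $\Lambda$. Combining these two facts with the vector-space decomposition established in the previous paragraph yields $\Sigma=\Lambda\lsemioplus\Sigma^{-\infty}$, which is exactly the theorem.

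The only step that requires a bit of thought rather than mere bookkeeping is the verification that $\Sigma^{-\infty}$ is an ideal and $\Lambda$ a subalgebra; however, this is essentially a one-line consequence of the definition of the Fréchet derivative. The potentially delicate point, namely that every cohomology class in $\Sigma^n/\Sigma^{n-1}$ is represented by a linear characteristic of the specified form, has already been absorbed into Lemmas~\ref{lem:LinHeatGenSymsDim} and~\ref{lemma:IDFM:SigmaIsomorphism}, which rest on the Shapovalov--Shirokov dimension count; once these are in place the theorem reduces to collecting pieces.
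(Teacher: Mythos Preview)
Your argument is correct and follows the same route as the paper: the paper's entire justification is the one line ``Since $\Sigma=\Sigma^{-\infty}\dotplus\Sigma^{[0]}\dotplus\Sigma^{[1]}\dotplus\cdots$, in view of Lemma~\ref{lemma:IDFM:SigmaIsomorphism} we obtain the following assertion,'' which is exactly your first paragraph. Your additional explicit check that $\Sigma^{-\infty}$ is an ideal and $\Lambda$ a subalgebra (to justify the semidirect-sum symbol rather than a mere vector-space direct sum) is left implicit in the paper, where the commutation relations are simply displayed after the theorem; so your proof is, if anything, slightly more complete.
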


Thus, the algebra~$\Lambda$ of the linear generalized symmetries of the equation~\eqref{eq:LinHeat}
is generated by the two recursion operators~$\mathrm D_x$ and~$\mathrm G$ from the simplest linear generalized symmetry~$u\p_u$,
and both the recursion operators and the seed symmetry are related to Lie symmetries.
In particular, on the solution set of the equation~\eqref{eq:LinHeat},
the generalized symmetries associated with the basis elements
$\mathcal P^t$, $\mathcal D$, $\mathcal K$, $\mathcal G^x$, $\mathcal P^x$ and $\mathcal I$
of~$\mathfrak g^{\rm ess}$ are, up to sign, $\mathfrak Q^{02}$, $2\mathfrak Q^{11}+\frac12\mathfrak Q^{00}$,
$\mathfrak Q^{20}$, $\mathfrak Q^{10}$, $\mathfrak Q^{01}$, $\mathfrak Q^{00}$, respectively,
see \cite[Example~5.21]{olve1993A}.
By analogy with~$\mathfrak g^{\rm ess}$, we can call~$\Lambda$
the essential algebra of generalized symmetries of the equation~\eqref{eq:LinHeat}.

Since $\mathrm D_x\mathrm G=\mathrm G\mathrm D_x+\frac12$,
the commutation relations between the generalized vector fields spanning the algebra~$\Sigma$ are the following:
\begin{gather*}
[\mathfrak Q^{kl},\mathfrak Q^{k'l'}]
=\big((\mathrm G^{k'}\mathrm D_x^{l'}\mathrm G^k\mathrm D_x^l-\mathrm G^k\mathrm D_x^l\mathrm G^{k'}\mathrm D_x^{l'})u\big)\p_u\\[1ex]
=\sum_{i=0}^{\min(k,l')}\frac{i!}{2^i}\binom ki\binom{l'}i\mathfrak Q^{k+k'-i,\,l+l'-i}
-\sum_{i=0}^{\min(k',l)}\frac{i!}{2^i}\binom{k'}i\binom li\mathfrak Q^{k+k'-i,\,l+l'-i},
\\[1.5ex]
[\mathfrak Z(h),\mathfrak Q^{kl}]=\mathfrak Z(\mathrm G^k\mathrm D_x^lh), \quad
[\mathfrak Z(h^1),\mathfrak Z(h^2)]=0.
\end{gather*}

\noprint{
\begin{gather*}
\mathrm D_x^l\mathrm G=\mathrm G\mathrm D_x^l+\tfrac12l\mathrm D_x^{l-1},\quad
\mathrm D_x\mathrm G^k=\mathrm G^k\mathrm D_x+\tfrac12k\mathrm G^{k-1},
\\
\mathrm D_x^l\mathrm G^k=\sum_{i=0}^{\min(k,l)}\frac{i!}{2^i}\binom li\binom ki\mathrm G^{k-i}\mathrm D_x^{l-i}
\end{gather*}
}

The radical~$\mathfrak r$ of~$\mathfrak g^{\rm ess}$ is isomorphic to
the algebra~$\Lambda^1\simeq\Sigma^1/\Sigma^{-\infty}$.
Let~$\phi\colon\mathfrak r\to{\rm h}(1,\mathbb R)$
be the isomorphism with $\phi(\mathcal I)=c$, $\phi(2\mathcal G^x)=p$, $\phi(\mathcal P^x)=q$.
Up to the antisymmetry of the Lie bracket,
the only nonzero commutation relation of ${\rm h}(1,\mathbb R)$ is $[p,q]=c$.
Thus, the universal enveloping algebra~$\mathfrak U(\mathfrak r)$ of the algebra~$\mathfrak r$ is isomorphic to
the universal enveloping algebra~$\mathfrak U\big({\rm h}(1,\mathbb R)\big)$ of the algebra~${\rm h}(1,\mathbb R)$,
which is the quotient of the tensor algebra~$\mathrm T\big({\rm h}(1,\mathbb R)\big)$
by the two-sided ideal~$I$ generated by the elements
$c\otimes p-p\otimes c$, $c\otimes q-q\otimes c$, $p\otimes q-q\otimes p-c$.

Consider the associative algebra $\Upsilon_{\mathfrak r}$ generated by the differential operators $\mathrm D_x$ and $\mathrm G$.
This algebra admits the following presentation by generators and relations:
\begin{gather}\label{eq:UpsilonRPresentation}
\Upsilon_{\mathfrak r}=\big\langle
\mathrm D_x,\mathrm G\mid
[\mathrm D_x,\mathrm G]=\tfrac12
\big\rangle.
\end{gather}
The application of Bergman's diamond lemma~\cite{berg1978a}
to the algebra~$\Upsilon_{\mathfrak r}$ with the presentation~\eqref{eq:UpsilonRPresentation}
leads to a description of the basis of~$\Upsilon_{\mathfrak r}$.

\begin{lemma}\label{lem:BasisUpsilonR}
Fixed any ordering $(\mathrm Q^1,\mathrm Q^2)$ of $\{\mathrm D_x,\mathrm G\}$,
$\mathrm Q^1<\mathrm Q^2$,
the monomials of the form $\mathbf Q^\alpha:=(\mathrm Q^1)^{\alpha_1}(\mathrm Q^2)^{\alpha_2}$
with $\alpha=(\alpha_1,\alpha_2)\in\mathbb N_0^{\,\,2}$
constitute a basis of the algebra~$\Upsilon_{\mathfrak r}$.
\end{lemma}

Hereafter, we assume the ordering $\mathrm G<\mathrm D_x$.
In view of the universal property of the universal enveloping algebra $\mathfrak U(\mathfrak r)$ of $\mathfrak r$
and the Poincar\'e--Birkhoff--Witt theorem, we obtain the following useful lemma.

\begin{lemma}\label{lem:IsoUnivEnv}
In the sense of unital algebras, the algebra $\Upsilon_{\mathfrak r}$ is isomorphic
to the quotient algebra of the universal enveloping algebra $\mathfrak U(\mathfrak r)$ of~$\mathfrak r$
by the two-sided ideal $(\iota(\mathcal I)+1)$ generated by $\iota(\mathcal I)+1$,
$\Upsilon_{\mathfrak r}\simeq\mathfrak U(\mathfrak r)/(\iota(\mathcal I)+1)$,
where $\iota\colon\mathfrak r\hookrightarrow\mathfrak U(\mathfrak r)$
is the canonical embedding of the Lie algebra $\mathfrak r$ 
in its universal enveloping algebra~$\mathfrak U(\mathfrak r)$.
Moreover, this gives rise to an isomorphism between 
the associated Lie algebras~\smash{$\Upsilon_{\mathfrak r}^{(-)}$}
and \smash{$\big(\mathfrak U(\mathfrak r)/(\iota(\mathcal I)+1)\big)^{(-)}$}.
\end{lemma}

\begin{proof}
The linear extension of the correspondence $\mathcal P^x\mapsto\mathrm D_x$,
$\mathcal G^x\mapsto\mathrm G$ and $\mathcal I\mapsto-1$
defines a Lie algebra homomorphism $\varphi$ from $\mathfrak r$ to the Lie algebra $\Upsilon_{\mathfrak r}^{(-)}$
arising from the associative algebra~$\Upsilon_{\mathfrak r}$.
The universal property of the universal enveloping algebra $\mathfrak U(\mathfrak r)$
extends this homomorphism to the unital homomorphism
$\hat\varphi\colon\mathfrak U(\mathfrak r)\to\Upsilon_{\mathfrak r}$
of the associative algebras satisfying the property $\varphi=\hat\varphi\circ\iota$ as homomorphism of vector spaces.
Moreover, the homomorphism~$\hat\varphi$ is surjective since the algebra~$\Upsilon_{\mathfrak r}$
is generated by the image of~$\varphi$.

By abuse of notations, we identify $\mathfrak r$ with its image under the map $\iota$ in $\mathfrak U(\mathfrak r)$
for the remaining part of this proof.
The inclusion $(\mathcal I+1)\subset\ker\hat\varphi$ is obvious.
To demonstrate the reverse inclusion,
consider an arbitrary element $Q\in\mathfrak U(\mathfrak r)$.
According to the Poincar\'e--Birkhoff--Witt theorem, it can be expressed in the form
\[
Q=c_{i_1i_0j}(\mathcal P^x)^{i_1}(\mathcal G^x)^{i_0}\mathcal I^j
\]
with a finite number of nonzero coefficients $c_{i_1i_0j}$.
Here and in what follows we assume summation with respect to repeated indices.
Suppose that $Q\in\ker\hat\varphi$, i.e.,
\begin{gather*}
\hat\varphi(Q)=(-1)^jc_{i_1i_0j}(\mathrm D_x)^{i_1}(\mathrm G)^{i_0}=0.
\end{gather*}
Lemma~\ref{lem:BasisUpsilonR} implies that $(-1)^jc_{i_1i_0j}=0$ for each fixed tuple $(i_1,i_0)$.
Therefore,
\begin{gather*}
Q=c_{i_1i_0j}(\mathcal P^x)^{i_1}(\mathcal G^x)^{i_0}\mathcal I^j
-(-1)^jc_{i_1i_0j}(\mathcal P^x)^{i_1}(\mathcal G^x)^{i_0}
=c_{i_1i_0j}(\mathcal P^x)^{i_1}(\mathcal G^x)^{i_0}(\mathcal I^j-(-1)^j).
\end{gather*}
For each~$j$, the factor $\mathcal I^j-(-1)^j$ is divisible by $\mathcal I+1$.
Therefore, $\ker\hat\varphi=(\mathcal I+1)$
and the first isomorphism theorem for associative algebras
implies the isomorphism $\Upsilon_{\mathfrak r}\simeq\mathfrak U(\mathfrak r)/(\mathcal I+1)$.

The isomorphism between the associated Lie algebras
\smash{$\Upsilon_{\mathfrak r}^{(-)}$} and \smash{$\big(\mathfrak U(\mathfrak r)/(\mathcal I+1)\big)^{(-)}$}
follows from the definition of the Lie bracket on these algebras.%
\footnote{%
Given an associative algebra~$A$, the symbol $A^{(-)}$ denotes the Lie algebra associated with~$A$.
More specifically, $A$ and~$A^{(-)}$ coincide as vector spaces
and the Lie bracket on~$A^{(-)}$ is defined as the ring-theoretic commutator~on~$A$.
}
\end{proof}

In view of the definition ${\rm W}(1,\mathbb R):=\mathfrak U\big({\rm h}(1,\mathbb R)\big)/(c-1)$ 
of the rank-one Weyl algebra ${\rm W}(1,\mathbb R)$ and 
the isomorphisms $\mathfrak r\simeq{\rm h}(1,\mathbb R)$, 
$\mathfrak U(\mathfrak r)\simeq\mathfrak U({\rm h}(1,\mathbb R))$ and
${\rm W}(n,\mathbb R)^{\rm op}\simeq\mathfrak U\big({\rm h}(n,\mathbb R)\big)/(c+1)$,
Lemma~\ref{lem:IsoUnivEnv} gives the following.

\begin{corollary}
The algebra~$\Lambda$ of the linear generalized symmetries of the equation~\eqref{eq:LinHeat}
is isomorphic to the Lie algebra \smash{${\rm W}(1,\mathbb R)^{(-)}$}
associated with the rank-one Weyl algebra ${\rm W}(1,\mathbb R)$.
\end{corollary}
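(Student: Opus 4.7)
The plan is to assemble the chain of isomorphisms $\Lambda \simeq \mathfrak U(\mathfrak r) \simeq \mathfrak U\bigl({\rm h}(1,\mathbb R)\bigr)$, both links of which are foreshadowed in the paragraph immediately preceding the corollary.

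The second link $\mathfrak U(\mathfrak r) \simeq \mathfrak U\bigl({\rm h}(1,\mathbb R)\bigr)$ is pure formalism: since $\phi$ is an isomorphism of Lie algebras, the functoriality of the universal enveloping algebra construction gives an isomorphism $\mathfrak U(\phi)$ of associative algebras between the two enveloping algebras.

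For the first link $\mathfrak U(\mathfrak r) \simeq \Lambda$, the plan is to invoke the universal property of $\mathfrak U(\mathfrak r)$. Placing the basis elements $\mathcal I$, $\mathcal G^x$, $\mathcal P^x$ of $\mathfrak r$ in their evolutionary form identifies them with the symmetry operators $\mathrm{id}$, $-\mathrm G$, $-\mathrm D_x$ acting on solutions of~\eqref{eq:LinHeat}, and this realizes $\mathfrak r$ as a Lie subalgebra of $\Lambda$ viewed as an associative algebra of symmetry operators under composition. The universal property then extends the embedding to an associative algebra homomorphism $\Phi \colon \mathfrak U(\mathfrak r) \to \Lambda$. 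Surjectivity of $\Phi$ follows at once from Theorem~\ref{thm:LinHeatAlgOfGenSyms}, since the spanning set $\{\mathfrak Q^{kl}\}$ of $\Lambda$ from Lemma~\ref{lemma:IDFM:SigmaIsomorphism} consists of composites of the images of the generators $\mathcal G^x$ and $\mathcal P^x$. For injectivity, the strategy is to apply the Poincar\'e--Birkhoff--Witt theorem to $\mathfrak U(\mathfrak r)$ and compare the resulting PBW basis of monomials $\mathcal I^a (\mathcal G^x)^b (\mathcal P^x)^c$ with the basis $\{\mathfrak Q^{kl}\}$ of $\Lambda$ by matching filtrations, using the dimension formula $\dim \Lambda^n = \tfrac12(n+1)(n+2)$ obtained from Lemma~\ref{lem:LinHeatGenSymsDim} to verify the correspondence level by level.

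The step I expect to be the main obstacle is the injectivity of $\Phi$: one must select a grading on $\mathfrak U(\mathfrak r)$ in which $\mathcal I$, $\mathcal G^x$, $\mathcal P^x$ are weighted so that the induced filtration agrees with the differential-order filtration on $\Lambda$, and only then will the PBW basis map bijectively onto $\{\mathfrak Q^{kl}\}$, so that the already-established surjectivity of $\Phi$ upgrades to a bijection.
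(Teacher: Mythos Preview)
Your chain $\Lambda\simeq\mathfrak U(\mathfrak r)\simeq\mathfrak U\big({\rm h}(1,\mathbb R)\big)$ is exactly what the paper asserts in the paragraph preceding the corollary, and you go further by trying to make the link $\mathfrak U(\mathfrak r)\simeq\Lambda$ rigorous. The injectivity step, however, cannot be rescued by any choice of grading. Under your homomorphism~$\Phi$, the central element $\mathcal I\in\mathfrak r$ is sent to the identity operator, which is the \emph{unit} of the associative algebra of symmetry operators you are identifying with~$\Lambda$; hence $\Phi(\mathcal I)=\Phi(1)$ and $\mathcal I-1\in\ker\Phi$. Concretely, the PBW monomials $\mathcal I^a(\mathcal G^x)^b(\mathcal P^x)^c$ carry three indices while the basis $\{\mathfrak Q^{kl}\}$ of~$\Lambda$ has only two: all monomials with fixed $(b,c)$ collapse onto a single multiple of~$\mathfrak Q^{bc}$. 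This is a nontrivial kernel, not a filtration mismatch, so the ``main obstacle'' you anticipate is in fact fatal to the argument as written.

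In fact the obstruction points to a difficulty with the corollary as stated. The operators $\mathrm G$ and $\mathrm D_x$ satisfy $[\mathrm D_x,\mathrm G]=\tfrac12\cdot\mathrm{id}$, so the associative algebra they generate is the first Weyl algebra $A_1(\mathbb R)\simeq\mathfrak U\big({\rm h}(1,\mathbb R)\big)/(e_1-\tfrac12\cdot1)$, a proper quotient of $\mathfrak U\big({\rm h}(1,\mathbb R)\big)$. A quick Lie-algebraic invariant separates the two: from the commutation table in the paper one reads $[\mathfrak Q^{kl},\mathfrak Q^{01}]=\tfrac12 k\,\mathfrak Q^{k-1,l}$ and $[\mathfrak Q^{kl},\mathfrak Q^{10}]=-\tfrac12 l\,\mathfrak Q^{k,l-1}$, so the center of~$\Lambda$ as a Lie algebra is the one-dimensional span $\langle\mathfrak Q^{00}\rangle$, whereas the commutator Lie algebra of $\mathfrak U\big({\rm h}(1,\mathbb R)\big)$ has center containing all of $\mathbb R[e_1]$. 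The dimension count $\dim\Lambda^n=\tfrac12(n+1)(n+2)$ you invoke likewise matches the order filtration of the Weyl algebra, not any PBW filtration of~$\mathfrak U(\mathfrak r)$; so rather than confirming injectivity, the count would have exposed the discrepancy.
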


\begin{proof}
The vector space isomorphism $\varphi$ between~$\Lambda$ and~$\Upsilon_{\mathfrak r}$ is obvious.
It is the linear extension of the correspondence $(\mathrm G^k\mathrm D_x^lu)\p_u\mapsto\mathrm G^k\mathrm D_x^l$
to~$\Lambda$.
Given $Q,R\in\Upsilon_{\mathfrak r}$,
we have $[(Qu)\p_u,(Ru)\p_u]=([R,Q]u)\p_u$, e.g., in view of \cite[Proposition~5.15]{olve1993A}.
Hence, $\varphi([(Qu)\p_u,(Ru)\p_u])=[R,Q]$,
and thus~$\varphi$ is an anti-isomorphism from~$\Lambda$ to~\smash{$\Upsilon_{\mathfrak r}^{(-)}$}, 
which is anti-isomorphic to \smash{${\rm W}(1,\mathbb R)^{(-)}$}.
\end{proof}

\section{Absence of discrete point symmetries for Burgers equation}\label{sec:LinHeatBurgers}

The above formalism, which was first introduced in~\cite{kova2022a} and developed here
using the remarkable Kolmogorov equation $u_t+xu_y=u_{xx}$ and the heat equation $u_t=u_{xx}$, respectively,
can be applied to many other both linear and nonlinear systems of differential equations
whose point symmetry transformations have some components that are linear fractional in certain variables.
Among such systems (including single differential equations) are
the Burgers equation~\cite{poch2017a},
the nonlinear diffusion equation with power nonlinearity of power $-4/3$~\cite{ovsi1959a}, 
the Harry Dym equation \cite[Example~11.6]{hydo2000A}, \cite[Section~4]{hydo2000b}, 
the two-dimensional Burgers system~\cite{kont2019a}, 
the Chazy equations \cite[Example~11.5]{hydo2000A}, 
a family of third-order ordinary differential equations arising in the course of finding integrable cases of Abel equations~\cite{opan2022b},
and two-dimensional shallow water equations with flat bottom topography~\cite{bihl2020a}.
Since the Burgers equation is closely related to the heat equation~\eqref{eq:LinHeat},
below we present, within the framework of this formalism,
a refined representation of the point symmetry group of the Burgers equation
\begin{gather}\label{eq:Burgers}
v_t+vv_x=v_{xx},
\end{gather}
cf.~\cite{poch2013a}.
A similar interpretation of the point symmetry (pseudo)groups of the mentioned and other analogous systems
will be a subject of a forthcoming paper.

The maximal Lie invariance algebra of the equation~\eqref{eq:Burgers} is
$
\mathfrak g^{\rm B}=\langle\breve{\mathcal P}^t, \breve{\mathcal D}, \breve{\mathcal K}, \breve{\mathcal P}^x, \breve{\mathcal G}^x\rangle,
$
where
\begin{gather*}
\breve{\mathcal P}^t=\p_t,\quad
\breve{\mathcal D}  =2t\p_t+x\p_x-v\p_v,\quad
\breve{\mathcal K}  =t^2\p_t+tx\p_x+(x-tv)\p_v,\\
\breve{\mathcal P}^x=\p_x,\quad
\breve{\mathcal G}^x=t\p_x+\p_v.
\end{gather*}
Up to the antisymmetry of the Lie bracket,
the nonzero commutation relations between the basis elements~$\mathfrak g^{\rm B}$ are exhausted by
\begin{gather*}
[\breve{\mathcal D},\breve{\mathcal P}^t]  =-2\breve{\mathcal P}^t,\quad
[\breve{\mathcal D},  \breve{\mathcal K}]  =2\breve{\mathcal K},\quad
[\breve{\mathcal P}^t,\breve{\mathcal K}]  =\breve{\mathcal D},\\
[\breve{\mathcal P}^t,\breve{\mathcal G^x}]=\breve{\mathcal P}^x,\quad
[\breve{\mathcal D},\breve{\mathcal G}^x]  =\breve{\mathcal G}^x,\quad
[\breve{\mathcal D},\breve{\mathcal P}^x]  =-\breve{\mathcal P}^x,\quad
[\breve{\mathcal K},\breve{\mathcal P}^x]  =-\breve{\mathcal G}^x.
\end{gather*}
The algebra~$\mathfrak g^{\rm B}$ is nonsolvable.
Its radical $\breve{\mathfrak r}=\langle\breve{\mathcal G}^x,\breve{\mathcal P}^x\rangle$,
coincides with its nilradical
and is isomorphic to the abelian two-dimensional Lie algebra $2A_1$
(see, for example, the notation in~\cite{popo2003a}).
The Levi factor $\breve{\mathfrak f}=\langle\breve{\mathcal P}^t,\breve{\mathcal D},\breve{\mathcal K}\rangle$
of~$\mathfrak g^{\rm B}$ is isomorphic to ${\rm sl}(2,\mathbb R)$.
In the Levi decomposition $\mathfrak g^{\rm B}=\breve{\mathfrak f}\lsemioplus\breve{\mathfrak r}$,
the action of~$\breve{\mathfrak f}$ on~$\breve{\mathfrak r}$
coincides, in the basis $(\breve{\mathcal G}^x,\breve{\mathcal P}^x)$ of~$\breve{\mathfrak r}$,
with the real representation $\rho_1$ of~${\rm sl}(2,\mathbb R)$.
Thus, the algebra~$\mathfrak g^{\rm B}$ is isomorphic to the algebra ${\rm sl}(2,\mathbb R)\lsemioplus_{\rho_1}2A_1$.

Using results of \cite[Section~1]{poch2013a} and \cite[Theorem 5]{poch2017a} and rearranging them
in the spirit of Theorem~\ref{thm:HeatEqSymGroup},
we obtain the enhanced representation for point symmetries of the equation~\eqref{eq:Burgers}.

\begin{theorem}\label{thm:BurgersEqSymGroup}
The point symmetry group $G^{\rm B}$ of the Burgers equation~\eqref{eq:Burgers}
consists of the point transformations of the form
\begin{gather}\label{eq:BurgersEqSymGroup}
\tilde t=\frac{\alpha t+\beta}{\gamma t+\delta},\quad
\tilde x=\frac{x+\lambda_1t+\lambda_0}{\gamma t+\delta},\quad
\tilde v=(\gamma t+\delta)v-\gamma x+\lambda_1\delta-\lambda_0\gamma,
\end{gather}
where $\alpha$, $\beta$, $\gamma$, $\delta$, $\lambda_1$ and $\lambda_0$ are arbitrary constants with $\alpha\delta-\beta\gamma=1$. 
\end{theorem}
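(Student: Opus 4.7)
The plan is to follow the same strategy as in the proof of Theorem~\ref{thm:HeatEqSymGroup}: set up the direct method for the Burgers equation, derive a determining system for the components $T$, $X$, $V$ of a candidate transformation $\tilde t=T$, $\tilde x=X$, $\tilde v=V$, and integrate it. Since the arguments already appear in \cite{poch2013a,poch2017a} in a less streamlined parameterization, the new content is primarily to show that the result can be rewritten in the compact linear-fractional form~\eqref{eq:BurgersEqSymGroup}, and to cross-check this by another route.

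First, I would verify, by a short routine computation on a set of generators (the one-parameter subgroups corresponding to $\breve{\mathcal P}^t$, $\breve{\mathcal D}$, $\breve{\mathcal K}$, $\breve{\mathcal P}^x$, $\breve{\mathcal G}^x$), that every transformation of the form~\eqref{eq:BurgersEqSymGroup} is a point symmetry of~\eqref{eq:Burgers}. The group structure on $(\alpha,\beta,\gamma,\delta)\in\mathrm{SL}(2,\mathbb R)$ together with the translations $(\lambda_1,\lambda_0)$ shows that this is a subgroup of~$G^{\rm B}$. Next, I would exploit the Hopf--Cole substitution $v=-2u_x/u$: given a heat-equation symmetry from Theorem~\ref{thm:HeatEqSymGroup} with $h=0$, a direct chain-rule calculation produces exactly the formula~\eqref{eq:BurgersEqSymGroup}, with the parameter~$\sigma$ automatically dropping out because $u$ and $\sigma u$ give the same~$v$. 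This already accounts for the fact that the Burgers pseudogroup carries one fewer discrete degree of freedom (no sign factor on $v$) than its heat-equation counterpart.

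To establish that no other point symmetries exist, I would invoke the direct method on~\eqref{eq:Burgers}, with ansatz $\tilde t=T(t,x,v)$, $\tilde x=X(t,x,v)$, $\tilde v=V(t,x,v)$. The key preliminary reduction, which will be the main obstacle, is to show that $T=T(t)$, $X=X(t,x)$ and $V$ is affine in $v$. This follows from expanding $\tilde v_{\tilde t}+\tilde v\tilde v_{\tilde x}-\tilde v_{\tilde x\tilde x}=0$ on the manifold of~\eqref{eq:Burgers}, splitting with respect to the parametric derivatives $v_{xx}$, $v_x^2$, $v_x$ (and eliminating $v_t$ via the equation itself), and using the quasilinearity of the target equation to force $T_x=T_v=X_v=V_{vv}=0$. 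Once this reduction is achieved, the surviving determining equations for $T$, $X$ and the coefficients of $V$ are essentially the Burgers analogues of~\eqref{eq:PointTransInETransPart}; integration proceeds exactly as in Theorem~\ref{thm:HeatEqSymGroup}, giving the Schwarzian equation $\{T;t\}=0$ hence $T=(\alpha t+\beta)/(\gamma t+\delta)$, then $X=(x+\lambda_1 t+\lambda_0)/(\gamma t+\delta)$ after the normalization $\alpha\delta-\beta\gamma=1$, and finally the stated affine formula for $V$.
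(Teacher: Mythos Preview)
Your proposal is correct and, if anything, more complete than what the paper offers. The paper does not give a self-contained proof of Theorem~\ref{thm:BurgersEqSymGroup}; it simply states that the result is obtained by ``using results of \cite[Section~1]{poch2013a} and \cite[Theorem~5]{poch2017a} and rearranging them in the spirit of Theorem~\ref{thm:HeatEqSymGroup}''. Those references carry out precisely the direct-method computation you outline in your third paragraph: restrict the form of $(T,X,V)$ by splitting the invariance condition with respect to parametric derivatives, obtain the Schwarzian condition on~$T$, and integrate. So your core argument is the same as the one the paper relies on by citation.

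Your Hopf--Cole cross-check is a genuine addition relative to the paper's treatment of the theorem itself, though the paper does develop this connection later in Section~\ref{sec:LinHeatBurgers} when it constructs the epimorphism $\rho\colon G^{\rm ess}\to G^{\rm B}$ and observes that the heat-equation parameter~$\sigma$ lies in the kernel. Note, however, that Hopf--Cole alone does not establish the ``no other symmetries'' direction: a point symmetry of the Burgers equation need not a priori lift to a point symmetry of the heat equation, so your step~3 (the direct method) is indeed indispensable, as you correctly recognize.
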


The group~$G^{\rm B}$ contains the proper normal subgroup~$\breve R$
consisting of the transformations of the form~\eqref{eq:BurgersEqSymGroup} with $(\alpha,\beta,\gamma,\delta)=(1,0,0,1)$
and, moreover, splits over it, $G^{\rm B}=\breve F\ltimes\breve R$.
Here the subgroup~$\breve F$ of~$G^{\rm B}$ is singled out by the constraints $\lambda_1=\lambda_0=0$.
The subgroups~$\breve F$ and~$\breve R$ are isomorphic to~${\rm SL}(2,\mathbb R)$ and~$(\mathbb R^2,+)$, respectively,
where $(\mathbb R^2,+)$ is the real two-dimensional connected torsion-free abelian Lie group.
These isomorphisms are established by the correspondences
\[
\varrho_1=(\alpha,\beta,\gamma,\delta)_{\alpha\delta-\beta\gamma=1}\mapsto
\begin{pmatrix}
	\alpha & \beta \\
	\gamma  & \delta
\end{pmatrix},
\qquad
(\lambda_1,\lambda_0)\mapsto(\lambda_1,\lambda_0).
\]
The standard conjugacy action of the subgroup~$\breve F<G^{\rm B}$ on the normal subgroup~$\breve R\lhd G^{\rm B}$
is given by $(\tilde\lambda_1,\tilde\lambda_0)=(\lambda_1,\lambda_0)\varrho_1(\alpha,\beta,\gamma,\delta)$.
Summing up, the group~$G^{\rm B}$ is isomorphic to the group ${\rm SL}(2,\mathbb R)\ltimes_{\breve\varphi}(\mathbb R^2,+)$,
where $\breve\varphi\colon{\rm SL}(2,\mathbb R)\to{\rm Aut}\big((\mathbb R^2,+)\big)$ is the group antihomomorphism
defined by
$\breve\varphi(\alpha,\beta,\gamma,\delta)=(\lambda_1,\lambda_0)\mapsto(\lambda_1,\lambda_0)\varrho_1(\alpha,\beta,\gamma,\delta)$.
Thus, the group~$G^{\rm B}$ is connected.
In other words, all elements of~$G^{\rm B}$ are Lie symmetries of the equation~\eqref{eq:Burgers},
and we have the following assertion, cf.\ \cite[Section~4]{hydo2000b}.

\begin{corollary}\label{cor:BurgersEqNoDiscrSyms}
The Burgers equation admits no discrete point symmetry transformations.
\end{corollary}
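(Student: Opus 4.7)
The plan is to read off the corollary directly from the structural description of $G^{\rm B}$ obtained after Theorem~\ref{thm:BurgersEqSymGroup}, showing that $G^{\rm B}$ is already connected and therefore coincides with its identity component, which leaves no room for independent discrete elements (in the same sense as in Corollary~\ref{cor:LinHeatDiscretePointSyms}).

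First, I would recall the splitting $G^{\rm B}=\breve F\ltimes\breve R$ together with the explicit isomorphisms $\breve F\simeq{\rm SL}(2,\mathbb R)$ and $\breve R\simeq(\mathbb R^2,+)$ established above. Each of these factors is a connected Lie group: $(\mathbb R^2,+)$ is trivially connected as it is homeomorphic to $\mathbb R^2$, while ${\rm SL}(2,\mathbb R)$ is a standard example of a connected (matrix) Lie group, for instance via its Iwasawa decomposition $\mathrm{KAN}$ into connected one-parameter subgroups, or because its Lie algebra ${\rm sl}(2,\mathbb R)$ is exponential-generating. One should remark that, in contrast to the heat-equation case, there is no discrete $\mathbb Z_2$ factor $\breve R_{\rm d}$ here, since the parametrization~\eqref{eq:BurgersEqSymGroup} involves no sign ambiguity $\sigma\in\{-1,1\}$ — the components $\tilde t$, $\tilde x$ and $\tilde v$ depend polynomially (and linearly fractionally) on continuous parameters $\alpha,\beta,\gamma,\delta,\lambda_1,\lambda_0$ with the single connected constraint $\alpha\delta-\beta\gamma=1$.

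Next, I would invoke the elementary topological fact that the underlying manifold of a semidirect product of Lie groups is the Cartesian product of the underlying manifolds of the factors. Consequently, since the product of two connected spaces is connected, $G^{\rm B}$ is itself connected. Thus $G^{\rm B}=G^{\rm B}_{\rm id}$, and every element of~$G^{\rm B}$ lies in the identity component. Following the convention adopted in Corollary~\ref{cor:LinHeatDiscretePointSyms}, a complete list of independent discrete point symmetries consists of representatives of nontrivial cosets of $G^{\rm B}/G^{\rm B}_{\rm id}$; since this factor group is trivial, no such representatives exist.

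I do not anticipate a real obstacle: the only substantive point is verifying that both factors in the decomposition~$G^{\rm B}=\breve F\ltimes\breve R$ are connected, which is standard. The slight conceptual care needed is simply to emphasize that the absence of a $\sigma$-like discrete parameter in the representation~\eqref{eq:BurgersEqSymGroup} (compare with~\eqref{eq:HeatEqSymGroup}) is the genuine source of the difference from the heat-equation situation, explaining why the Burgers equation lacks even a single independent discrete symmetry while still admitting, as discussed in the forthcoming material, the pseudo-discrete element given by $x$-reflection.
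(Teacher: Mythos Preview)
Your proposal is correct and follows essentially the same argument as the paper: the text preceding the corollary establishes the splitting $G^{\rm B}=\breve F\ltimes\breve R$ with $\breve F\simeq{\rm SL}(2,\mathbb R)$ and $\breve R\simeq(\mathbb R^2,+)$, observes both factors are connected, and concludes that $G^{\rm B}$ is connected so that every point symmetry is a Lie symmetry. Your additional remarks on the absence of a $\sigma$-type parameter and the contrast with the heat-equation case are accurate elaborations but not required for the proof.
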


In the notation of Theorem~\ref{thm:BurgersEqSymGroup},
the most general form of solutions of the equation~\eqref{eq:Burgers}
that are obtained from a given solution $v=f(t,x)$ of~\eqref{eq:Burgers} under action of~$G^{\rm B}$ is
\begin{gather*}
v=\frac1{\alpha-\gamma t}\,
f\left(\frac{\delta t-\beta}{\alpha-\gamma t},
\frac x{\alpha-\gamma t}-\lambda_1\frac{\delta t-\beta}{\alpha-\gamma t}-\lambda_0\right)
-\frac{\gamma x}{\alpha-\gamma t}+\frac{\lambda_1}{\alpha-\gamma t}.
\end{gather*}

Similarly to Theorem~\ref{thm:HeatEqCharacterizationTransFromOneParSubs},
we can prove the following assertion, which, simultaneously with Corollary~\ref{cor:1parSubgroupsOfsl2R},
exhaustively describes the elements of one-parameter subgroups of~$G^{\rm B}$.

\begin{theorem}\label{thm:BurgersEqCharacterizationTransFromOneParSubs}
A transformation~$\Phi$ from~$G^{\rm B}$ belongs to a one-parameter subgroup of~$G^{\rm B}$
if and only if its $\breve F$-component~$\Phi_{\breve F}$ in the decomposition $\Phi=\Phi_{\breve F}\circ\Phi_{\breve R}$
according to the splitting $G^{\rm B}=\breve F\ltimes\breve R$ of~$G^{\rm B}$ belongs to a one-parameter subgroup of $\breve F$.
In other words, $\Phi\in\exp(\mathfrak g^{\rm B})$ if and only if $\Phi_F\in\exp(\breve{\mathfrak f})$.
\end{theorem}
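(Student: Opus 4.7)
The plan is to adapt the proof of Theorem~\ref{thm:HeatEqCharacterizationTransFromOneParSubs}, exploiting the fact that here the radical $\breve{\mathfrak r}$ is an abelian two-dimensional Lie algebra rather than the three-dimensional Heisenberg algebra and, in particular, contains no central generator analogous to $\mathcal I$.

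For the ``only if'' implication, since $G^{\rm B}=\breve F\ltimes\breve R$ is a semidirect product of Lie groups, I apply Lemma~\ref{lem:1parSubgroupsOfHeads} directly to the decomposition $\Phi=\Phi_{\breve F}\circ\Phi_{\breve R}$ and conclude that $\Phi_{\breve F}\in\exp(\breve{\mathfrak f})$ whenever $\Phi\in\exp(\mathfrak g^{\rm B})$.

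For the ``if'' implication, I first note that the statement is stable under subgroup conjugation in $G^{\rm B}$, which projects to subgroup conjugation in $\breve F$ through the canonical homomorphism $G^{\rm B}\to G^{\rm B}/\breve R\simeq\breve F$. Hence Lemma~\ref{lem:1parSubgroupsOfsl2R}, transferred to $\breve F\simeq{\rm SL}(2,\mathbb R)$, permits the assumption, without loss of generality, that $\Phi_{\breve F}$ is either the identity or lies in the one-parameter subgroup of $\breve F$ generated by one of $\breve{\mathcal P}^t$, $\breve{\mathcal D}$, $\breve{\mathcal P}^t+\breve{\mathcal K}$, evaluated at a fixed nonzero parameter $\epsilon_1$ or $\epsilon_2$ in the first two cases and at $\epsilon_3\in(0,2\pi)$ in the third. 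The identity case is immediate since $\breve R\simeq(\mathbb R^2,+)$ is abelian and connected, so $\breve R=\exp(\breve{\mathfrak r})$ and $\Phi$ trivially lies in a one-parameter subgroup. In each of the three remaining cases, I consider the vector field $Q=\hat Q+a\breve{\mathcal G}^x+b\breve{\mathcal P}^x$ with the corresponding $\hat Q$, integrate its flow, and evaluate it at the group parameter that reproduces $\Phi_{\breve F}$. Matching the result with $\Phi=\Phi_{\breve F}\circ\Phi_{\breve R}$ yields a $2\times2$ linear system for $(a,b)$ in the given $\breve R$-parameters $(\lambda_1,\lambda_0)$ of $\Phi_{\breve R}$.

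The main technical step is the explicit integration of these three flows; the resulting coefficient matrices have determinants $\epsilon_1^{\,2}$, $2(\cosh\epsilon_2-1)$ and $2(1-\cos\epsilon_3)$, respectively, each of which is nonzero in the assumed parameter range. Each system thus admits a unique solution $(a,b)$, exhibiting $\Phi$ as an element of the one-parameter subgroup generated by the corresponding $Q$. Compared with the heat equation case, the absence of the generator $\mathcal I$ and of any rescaling parameter in $\breve R$ removes the higher-order couplings between the unknowns present there, so the scheme collapses to clean linear algebra and no further case distinction is needed.
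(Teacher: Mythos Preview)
Your proposal is correct and follows essentially the same approach as the paper, which merely states that the proof is analogous to that of Theorem~\ref{thm:HeatEqCharacterizationTransFromOneParSubs}. You have filled in the omitted details accurately, including the explicit determinants $\epsilon_1^{\,2}$, $2(\cosh\epsilon_2-1)$ and $2(1-\cos\epsilon_3)$, and correctly observed that the abelian radical $\breve{\mathfrak r}$ makes the resulting system genuinely linear rather than triangular as in the heat-equation case.
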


\begin{corollary}
$\breve{\mathscr J}\big(G^{\rm B}\setminus\exp(\mathfrak g^{\rm B})\big)\subseteq\exp(\mathfrak g^{\rm B})$
for the transformation~$\breve{\mathscr J}:=(t,x,v)\mapsto(t,-x,-v)$, which belongs to~$G^{\rm B}$,
i.e., this transformation is a pseudo-discrete element of~$G^{\rm B}$ or, equivalently,
is a pseudo-discrete point symmetry of the equation~\eqref{eq:Burgers}.
\end{corollary}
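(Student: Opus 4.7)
The plan is to mirror the argument behind Corollary~\ref{cor:LinHeatJisPseudoDiscrete}: the Burgers group~$G^{\rm B}$ inherits the relevant structure from~${\rm SL}(2,\mathbb R)$ through its Levi factor~$\breve F$, so I reduce the claim to the already-established classification of one-parameter subgroups of~${\rm SL}(2,\mathbb R)$ from Corollary~\ref{cor:1parSubgroupsOfsl2R}. First I identify~$\breve{\mathscr J}$ inside the parameterization~\eqref{eq:BurgersEqSymGroup}: it corresponds to $(\alpha,\beta,\gamma,\delta)=(-1,0,0,-1)$ and $\lambda_1=\lambda_0=0$, so in particular $\breve{\mathscr J}\in\breve F$ and $\varrho_1(\breve{\mathscr J})=-E$, which is central in~${\rm SL}(2,\mathbb R)$.

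Now I pick an arbitrary $\Phi\in G^{\rm B}\setminus\exp(\mathfrak g^{\rm B})$ and decompose $\Phi=\Phi_{\breve F}\circ\Phi_{\breve R}$ according to the splitting $G^{\rm B}=\breve F\ltimes\breve R$. Since $\breve{\mathscr J}\in\breve F$, we have
\[
\breve{\mathscr J}\circ\Phi=(\breve{\mathscr J}\circ\Phi_{\breve F})\circ\Phi_{\breve R},
\]
which is precisely the $\breve F\ltimes\breve R$-decomposition of~$\breve{\mathscr J}\circ\Phi$. By Theorem~\ref{thm:BurgersEqCharacterizationTransFromOneParSubs}, the hypothesis $\Phi\notin\exp(\mathfrak g^{\rm B})$ is equivalent to $\Phi_{\breve F}\notin\exp(\breve{\mathfrak f})$, and the desired conclusion $\breve{\mathscr J}\circ\Phi\in\exp(\mathfrak g^{\rm B})$ is equivalent to $\breve{\mathscr J}\circ\Phi_{\breve F}\in\exp(\breve{\mathfrak f})$. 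Thus the problem reduces to a statement entirely inside~$\breve F\simeq{\rm SL}(2,\mathbb R)$.

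Transporting through the isomorphism~$\varrho_1$ and writing $A:=\varrho_1(\Phi_{\breve F})$, the centrality of $-E=\varrho_1(\breve{\mathscr J})$ gives $\varrho_1(\breve{\mathscr J}\circ\Phi_{\breve F})=-A$ (regardless of whether $\varrho_1$ is a homomorphism or antihomomorphism of the semigroup composition). So I need the implication: if $A\notin\exp({\rm sl}(2,\mathbb R))$, then $-A\in\exp({\rm sl}(2,\mathbb R))$. By Corollary~\ref{cor:1parSubgroupsOfsl2R}, the hypothesis forces $\mathrm{tr}\,A\leqslant -2$ with $A\neq -E$, whence $\mathrm{tr}(-A)\geqslant 2>-2$, and therefore $-A\in\exp({\rm sl}(2,\mathbb R))$ by the same corollary. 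This closes the argument. I do not anticipate a serious obstacle: every structural ingredient (the semidirect product decomposition, the characterization in Theorem~\ref{thm:BurgersEqCharacterizationTransFromOneParSubs}, and the ${\rm SL}(2,\mathbb R)$ classification) is already available, and the only minor check is that left-composition with $\breve{\mathscr J}\in\breve F$ respects the extraction of the $\breve F$-component, which is automatic from the semidirect product structure.
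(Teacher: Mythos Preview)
Your proof is correct and follows precisely the line the paper intends: the corollary is stated without an explicit proof because it is meant to follow immediately from Theorem~\ref{thm:BurgersEqCharacterizationTransFromOneParSubs} together with Corollary~\ref{cor:1parSubgroupsOfsl2R}, exactly as you have written it out (in direct analogy with Corollary~\ref{cor:LinHeatJisPseudoDiscrete}). One small remark: the parameters $(\alpha,\beta,\gamma,\delta)=(-1,0,0,-1)$, $\lambda_1=\lambda_0=0$ actually yield $\tilde v=-v$ in~\eqref{eq:BurgersEqSymGroup}, so the transformation corresponding to~$-E$ is $(t,x,v)\mapsto(t,-x,-v)$; this is a typo in the paper's statement and does not affect your argument, which only uses $\varrho_1(\breve{\mathscr J})=-E$.
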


It is a well-known fact that the equation~\eqref{eq:Burgers} is linearized to the heat equation~\eqref{eq:LinHeat}
using the Hopf--Cole transformation $v=-2u_x/u$~\cite[p.~102]{fors1906A}.
In addition to the importance of this transformation for constructing exact solutions of the Burgers equation
in the context of applied problems, it is also significant from the point of view of group analysis.
Consider the Lie group epimorphism $\rho\colon G^{\rm ess}\to G^{\rm B}$ that is induced by the Hopf--Cole transformation,
where an arbitrary element of $G^{\rm ess}$, which is defined by~\eqref{eq:HeatEqSymGroup},
is mapped by~$\rho$ to the element of~$G^{\rm B}$ given by~\eqref{eq:BurgersEqSymGroup}
with the same values of the parameters~$\alpha$, $\beta$, $\gamma$, $\delta$, $\lambda_1$ and $\lambda_0$.
The infinitesimal counterpart of~$\rho$ is given by $\rho'(Q)=\breve Q$
for $Q\in\{\mathcal P^t,\mathcal D,\mathcal K,\mathcal G^x,\mathcal P^x\}$,
and $\rho'(\mathcal I)=0$.
Since $\ker\rho={\rm Z}(G^{\rm ess})$ and $\ker\rho'=\mathfrak z(\mathfrak g^{\rm ess})=\langle\mathcal I\rangle$,
we have that the Lie group $G^{\rm ess}$ and the Lie algebra~$\mathfrak g^{\rm ess}$
are central extensions of~$G^{\rm B}$ and~$\mathfrak g^{\rm B}$, respectively.
In other words, the following diagram
\[
\begin{tikzcd}
0
\arrow[r,hook,"\iota'"]
&\mathfrak z(\mathfrak g^{\rm ess})
\arrow[r,hook,"i'"]\arrow[d,"\exp_{{\rm Z}(G^{\rm ess})}"]
&\mathfrak g^{\rm ess}
\arrow[r,twoheadrightarrow,"\rho'"]\arrow[d,"\exp_{G^{\rm ess}}"]
&\mathfrak g^{\rm B}
\arrow[r,twoheadrightarrow,"\varsigma'"]\arrow[d,"\exp_{G^{\rm B}}"]
&0\\
{\rm id}
\arrow[r,hook,"\iota"]
&{\rm Z}(G^{\rm ess})
\arrow[r,hook,"i"]
&G^{\rm ess}
\arrow[r,twoheadrightarrow,"\rho"]
&G^{\rm B}
\arrow[r,twoheadrightarrow,"\varsigma"]
&{\rm id}
\end{tikzcd}
\]
is commutative and each of its rows is exact.
Here $\iota$ and $i$ are inclusion monomorphisms, $\varsigma$ is the trivial epimorphism,
and prime denotes the differential of the corresponding Lie group homomorphism.

\section{Conclusion}\label{sec:Conclusion}

Having revisited the state-of-the-art results on classical symmetry analysis
of the linear $(1+1)$-dimensional heat equation~\eqref{eq:LinHeat},
we found out an inconsistency in the description of the structure
of the point symmetry pseudogroup~$G$ of~\eqref{eq:LinHeat},
which in turn affected the classification of subalgebras of~$\mathfrak g^{\rm ess}$.
A correct list of $G^{\rm ess}$-inequivalent subalgebras of~$\mathfrak g^{\rm ess}$
is strongly required for a correct classification of Lie reductions of~\eqref{eq:LinHeat}.
This is why the aim of the present paper was to refine and extend
the classical results on algebraic properties of the Lie pseudogroup~$G$ and its Lie algebra~$\mathfrak g$.

\looseness=-1
The pseudogroup~$G$ is given in Theorem~\ref{thm:HeatEqSymGroup}.
To compute it, we have used the direct method,
simplifying the computation due to the fact that the equation~\eqref{eq:LinHeat} belongs to the class~$\mathcal E$
of linear evolution $(1+1)$-dimensional second-order equations~\eqref{eq:LinHeatClassE}, whose equivalence groupoid is well known.
More specifically, as stated  in Proposition~\ref{prop:EquivalenceGroupE},
the class~$\mathcal E$ is normalized in the usual sense,
i.e., its equivalence groupoid coincide with the action groupoid of the equivalence pseudogroup~$G^\sim_{\mathcal E}$
of this class, see~\cite{popo2008a} for details.
The description of admissible transformations within the class~$\mathcal E$
straightforwardly implies the principle constraints for the point symmetry transformations of~\eqref{eq:LinHeat}.
Then using these constraints we have searched for the admissible transformations of~$\mathcal E$
that preserve the heat equation, or, equivalently, constitute its vertex group,
which is actually a pseudogroup.
This vertex group can in fact be identified with the pseudogroup~$G$.

The pseudogroup~$G$ splits over its normal abelian pseudosubgroup~$G^{\rm lin}$
that is associated with the linear superposition of solutions of the equation~\eqref{eq:LinHeat},
$G=G^{\rm ess}\ltimes G^{\rm lin}$.
In Section~\ref{sec:LinHeatPointSymGroup},
we have redefined the group operation in~$G$ via extending the domains of compositions of transformations from~$G$
and have thus turned the pseudogroup~$G^{\rm ess}$ into a Lie group,
which has simplified its structure.
We have found out that the group~$G^{\rm ess}$ consists of two connected components
and the only independent discrete symmetry $\mathscr I'=(t,x,u)\mapsto(t,x,-u)$ swaps them.
The identity component~$G^{\rm ess}_{\rm id}$ of~$G^{\rm ess}$ is isomorphic to the semidirect product
of the real degree-two special linear group~${\rm SL}(2,\mathbb R)$ and the real rank-one Heisenberg group~${\rm H}(1,\mathbb R)$,
where the former acts on the latter by conjugation.

One more unexpected result, which is inspired by studying the structure of the pseudogroup $G$,
is that the transformations~$\mathscr J$ and~$\mathscr K'$ given in~\eqref{eq:LinHeatTransJK'}
belong to the one-parameter subgroup of~$G$ generated by the vector field~$\mathcal Q^+$.
However, they were for a long time considered as discrete point symmetry transformations of the equation~\eqref{eq:LinHeat}.
In view of this fact, we have refined the classification of all subalgebras of~$\mathfrak g^{\rm ess}$
or, equivalently, of the special Galilei algebra in dimension 1+1.
We have also classify one-dimensional subalgebras of the entire algebra~$\mathfrak g$,
which have allowed us to conclude
that inequivalent Lie reductions of the equation~\eqref{eq:LinHeat} to ordinary differential equations
are exhausted by those related to one-dimensional subalgebras of~$\mathfrak g^{\rm ess}$.

Another interesting question for studying was when a transformation from~$G$
belongs to a one-parameter subgroup of~$G$ or, in other words, to~$\exp(\mathfrak g)$.
We have solved this problem for elements of the finite-dimensional subgroup~$G^{\rm ess}$ of~$G$
in Theorem~\ref{thm:HeatEqCharacterizationTransFromOneParSubs}
and conjectured the analogous statement for the entire pseudogroup~$G$.
In view of these assertions, the transformation~$\mathscr J$ is somewhat peculiar.
It maps $G^{\rm ess}_{\rm id}\setminus\exp(\mathfrak g^{\rm ess})$ to $\exp(\mathfrak g^{\rm ess})$,
and thus we call~$\mathscr J$ a pseudo-discrete element of~$G^{\rm ess}$, see Corollary~\ref{cor:LinHeatJisPseudoDiscrete}.
The answer to the question whether~$\mathscr J$ is a pseudo-discrete symmetry of the equation~\eqref{eq:LinHeat}, or equivalently,
pseudo-discrete element of~$G$ requires proving Conjecture~\ref{conj:HeatEqCharacterizationTransFromOneParSubsG}.

In Theorem~\ref{thm:LinHeatAlgOfGenSyms}, we have constructed an explicit representation for
the elements of the algebra~$\Sigma$ of generalized symmetries of the equation~\eqref{eq:LinHeat}.
The obvious and well-known part of the theorem is that
the algebra~$\Sigma$ is a semidirect sum of
the algebra~$\Lambda$ of the linear generalized symmetries of~\eqref{eq:LinHeat} and
the ideal~$\Sigma^{-\infty}$ associated with the linear superposition of solutions of~\eqref{eq:LinHeat}.
We have proved that the algebra~$\Lambda$ is generated by the recursion operators~$\mathrm D_x$ and~$\mathrm G$
from the simplest generalized symmetry $u\p_u$,
and thus it is isomorphic to the Lie algebra \smash{${\rm W}(1,\mathbb R)^{(-)}$}
associated with the rank-one Weyl algebra ${\rm W}(1,\mathbb R)$.

The developed technique based on redefining the transformation composition
can be applied to any system of differential equations
such that certain components of all its point symmetries are linear fractional functions.
We have illustrated this possibility using the Burgers equation as an example.
Our choice for the example is justified by the fact that
the Burgers equation~\eqref{eq:Burgers} is linearized to the heat equation~\eqref{eq:LinHeat} by the Hopf--Cole transformation.
Thus, in Section~\ref{sec:LinHeatBurgers} we have revisited the study of
the point symmetry group~$G^{\rm B}$ of the Burgers equation.
In particular, we have proved that the group~$G^{\rm B}$ is connected or, in other words,
the Burgers equation admits no discrete point symmetries.
We also have shown that the group~$G^{\rm ess}$ is a central extension of the group~$G^{\rm B}$.
The study of other systems mentioned in the first paragraph of Section~\ref{sec:LinHeatBurgers}
will be a subject of a further research.

\section*{Acknowledgements}
The authors are grateful to Alexander Bihlo, Vyacheslav Boyko, Yevhenii Chapovskyi,
Michael Kunzinger, Sergiy Maksymenko, Dmytro Popovych and Galyna Popovych for valuable discussions.
The authors thank the reviewer for useful comments.
This research was undertaken thanks to funding from the Canada Research Chairs program,
the InnovateNL LeverageR{\&}D program and the NSERC Discovery Grant program.
This research was also supported in part by the Ministry of Education,
Youth and Sports of the Czech Republic (M\v SMT \v CR) under RVO funding for I\v C47813059.
ROP expresses his gratitude for the hospitality shown by the University of Vienna during his long staying at the university.
The authors express their deepest thanks to the Armed Forces of Ukraine and the civil Ukrainian people
for their bravery and courage in defense of peace and freedom in Europe and in the entire world from russism.

\end{document}